\newtheorem{theorem}{Theorem}[section]
\newtheorem{lemma}[theorem]{Lemma}
\newtheorem{proposition}[theorem]{Proposition}
\newtheorem{example}[theorem]{Example}
\newtheorem{corollary}[theorem]{Corollary}
\newtheorem{remar}[theorem]{Remark}
\newtheorem{prob}{Open Problem}[section]
\newenvironment{proof}{Proof:\ \ \ }{\QED}
\newcommand{\QED}{{\unskip\nobreak\hfil\penalty50%
\hskip1em\hbox{}\nobreak\hfil $\Box$%
\parfillskip=0pt \finalhyphendemerits=0 \par\medskip\noindent}}
\newcommand{\bfind}[1]{\index{#1}{\bf #1}}
\newcommand{\n}{\par\noindent}
\newcommand{\sn}{\par\smallskip\noindent}
\newcommand{\mn}{\par\medskip\noindent}
\newcommand{\bn}{\par\bigskip\noindent}
\newcommand{\pars}{\par\smallskip}
\newcommand{\parm}{\par\medskip}
\newcommand{\isom}{\simeq}
\newcommand{\ovl}[1]{\overline{#1}}
\newcommand{\chara}{\mbox{\rm char}\,}
\newcommand{\trdeg}{\mbox{\rm trdeg}\,}
\newcommand{\rr}{\mbox{\rm rr}\,}
\newcommand{\fvkadresse}{\par\bigskip \small\rm
 Department of Mathematics and Statistics, 
 University of Saskatchewan, \par
 106 Wiggins Road, 
 Saskatoon, Saskatchewan, Canada S7N 5E6 \par
 email: fvk@math.usask.ca}
\newcommand{\anadresse}{\par\bigskip \small\rm
 Department of Mathematics, 
 GC University, \par
 Katchery Road, 
 Lahore, Pakistan 54000 \par
 email: asimroz@gmail.com}
\font\tenlv=msbm10 scaled 1200
\font\sevenlv=msbm7 scaled 1200
\font\fivelv=msbm5 scaled 1200
\def\lv #1{{\mathchoice{{\hbox{\tenlv #1}}}{{\hbox{\tenlv #1}}}
{{\hbox{\sevenlv #1}}}{{\hbox{\fivelv #1}}}}}
\newcommand{\Q}{\lv Q}
\newcommand{\Z}{\lv Z}
\newcommand{\F}{\lv F}
\newcommand{\Fp}{\F_p}
\begin{document}
\title{Defects of Algebraic Function Fields, Completion Defects and
Defect Quotients}
\author{Franz-Viktor Kuhlmann, Asim Naseem}
\date{22.\ 1.\ 2013}
\maketitle
\begin{abstract}\noindent
The {\it defect} (also called {\it ramification deficiency}) of valued
field extensions is a major stumbling block in deep open problems of
valuation theory in positive characteristic. For a detailed analysis, we
define and investigate two weaker notions of defect: the {\it completion
defect} and the {\it defect quotient}. We define them for finite
extensions as well as for certain valued function fields (those with
``Abhyankar valuations'' that are nontrivial on the ground field). We
investigate both defects and present analogues of the results that hold
for the usual defect.
\end{abstract}
%
%
%
%
\section{Introduction}
For a valued field $(K,v)$ we denote its value group by $vK$ and its
residue field by $Kv$; for $a\in K$, $va$ denotes its value, and $av$
its residue. We denote the algebraic closure of $K$ by $\tilde{K}$ and
the perfect hull by $K^{1/p^{\infty}}$. By $(L|K,v)$ we mean an
extension of valued fields where $v$ is a valuation on $L$ and its
subfield $K$ is endowed with the restriction of $v$. Throughout,
\textbf{function field} will always mean an {\it algebraic function
field}.

In what follows, we fix an extension of the valuation $v$ from $K$ to
its algebraic closure. All algebraic extensions of $K$ will be endowed
with the restriction of this valuation. All of these valuations will
again be denoted by $v$. This also determines uniquely the
henselizations of all algebraic extensions of $K$ (cf.\
Section~\ref{secthc}). An algebraic extension $(L|K,v)$ is called
\textbf{h-finite} if $(L^h|K^h,v)$ is finite, where $K^h$ is the unique
henselization of $K$ inside the henselization $L^h$ of $L$. The defect
of an h-finite extension $(L|K,v)$ is the natural number
\[
d(L|K,v)\>:=\>\frac{[L^h:K^h]}{(vL^h:vK^h)[L^hv:K^hv]}
\>=\>\frac{[L^h:K^h]}{(vL:vK)[Lv:Kv]}\>;
\]
the second equation holds since henselizations are immediate extensions
(see Section~\ref{secthc}). By the Lemma of Ostrowski (see
Section~\ref{secDefect}), this quotient is always 1 if $Kv$ has
characteristic 0, and it is a power of $p$ (possibly 1) if $Kv$ has
characteristic $p>0$.

Matignon and Ohm (cf.\ [M], [O]) have used a \textsl{completion defect}
which measures the defect (for valuations of rank 1) using completions
instead of henselizations; it can be analogously defined for certain
valued function fields. The notion of completion defect played the key
role in the proof of Matignon's genus reduction inequality for valued
function fields. This inequality was first proved by Matignon in [M] for
valued function fields of transcendence degree 1 and rank 1 (i.e., with
archimedean value group, meaning that it is a subgroup of the reals). It
was extended in [GMP] to an arbitrary finite family of valuations
coinciding on the constant field.

Since for valuations of arbitrary rank, the completion
is in general not henselian, we measure the defect over the completion
of the henselization. This defect then coincides with Matignon's and
Ohm's completion defect if the valuation is of rank 1.

Let $K^{hc}$ denote the completion of $(K^h,v)$. Take any h-finite
extension $(L|K,v)$. We define the \textbf{completion defect}
$d_c(L|K,v)$ as follows:
\[
d_c(L|K,v)\>:=\>\frac{[L^{hc}:K^{hc}]}{(vL:vK)[Lv:Kv]}\>=\>
d(L^{hc}|K^{hc},v)\>;
\]
this is well-defined and the second equation holds because completions
are immediate extensions and $(L^{hc}|K^{hc},v)$ is a finite extension
of henselian fields (see Section~\ref{secthc}, in particular
Lemma~\ref{Lc=L.Kc} and Lemma~\ref{hchc=hc}). This is why we have chosen
to work with the completion of the henselization. Instead, we could have
chosen to work with
\[
d(L^c|K^c,v)\>=\>\frac{[L^{ch}:K^{ch}]}{(vL^c:vK^c)[L^cv:K^cv]}
\>=\>\frac{[L^ch:K^ch]}{(vL:vK)[Lv:Kv]},
\]
where $K^{ch}$ is the henselization of the completion of $K$ (which is
not necessarily complete). In fact, we will prove:

\begin{proposition}                             \label{ddcdch}
For every h-finite extension $(L|K,v)$,
\[
d(L^c|K^c,v)\>=\>d_c(L|K,v)\>,
\]
and for every h-finite separable extension $(L|K,v)$,
\[
d_c(L|K,v)\>=\>d(L|K,v)\>.
\]
\end{proposition}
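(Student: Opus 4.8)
The plan is to cancel the common denominator $(vL:vK)[Lv:Kv]$ from all the defect quotients involved and reduce both equalities to identities between field degrees. Writing $K^c$ for the completion of $K$ and recalling that completions are immediate extensions (so $(vL^c:vK^c)=(vL:vK)$ and $[L^cv:K^cv]=[Lv:Kv]$, and likewise after henselizing), the first equality becomes $[L^{ch}:K^{ch}]=[L^{hc}:K^{hc}]$ and the second becomes $[L^{hc}:K^{hc}]=[L^h:K^h]$, where throughout the henselizations are taken inside the fixed algebraic closure and then completed.

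I would treat the second equality first. Since $L/K$ is separable algebraic and $K^h/K$ is separable algebraic, $L^h=L\cdot K^h$ is separable algebraic over $K$, so $L^h/K^h$ is a finite \emph{separable} extension; Lemma~\ref{Lc=L.Kc}, applied over the henselian field $K^h$, gives $L^{hc}=L^h\cdot K^{hc}$. It therefore suffices to show that $L^h$ and $K^{hc}$ are linearly disjoint over $K^h$. The key input is that a henselian field $N$ is relatively separably algebraically closed in its completion $N^c$: if $\gamma\in N^c$ is separable algebraic over $N$ with minimal polynomial $f$ over $N$, choose $c\in N$ with $v(\gamma-c)$ large; then $v(f(c))$ is large while $v(f'(c))=v(f'(\gamma))$ is a fixed finite value by separability, so $v(f(c))>2v(f'(c))$, and the Newton-approximation form of Hensel's Lemma yields a root of $f$ in $N$; irreducibility forces $\deg f=1$, i.e.\ $\gamma\in N$. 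Applying this to the finite Galois closure $M$ of $L^h/K^h$ shows $M\cap K^{hc}=K^h$, hence, $M/K^h$ being Galois, $M$ and $K^{hc}$ are linearly disjoint over $K^h$, and so is the intermediate field $L^h$. Thus $[L^{hc}:K^{hc}]=[L^h\cdot K^{hc}:K^{hc}]=[L^h:K^h]$, giving $d_c(L|K,v)=d(L|K,v)$.

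For the first equality I would first record the chain of immediate inclusions $K^h\subseteq K^{ch}\subseteq K^{hc}$ (and likewise with $L$): $K^{ch}$ is henselian and contains $K$, hence contains $K^h$; and $K^{hc}$ is complete and henselian (Lemma~\ref{hchc=hc}) and contains $K^h$, hence contains the completion $K^c$ and therefore its henselization $K^{ch}$. Passing to completions and using Lemma~\ref{hchc=hc} once more shows that $K^{hc}$ is the completion of $K^{ch}$ and $L^{hc}$ is the completion of $L^{ch}$. Using the groundwork of Section~\ref{secthc}, $L^{ch}/K^{ch}$ is finite, so Lemma~\ref{Lc=L.Kc} applied to it over the henselian field $K^{ch}$ gives $L^{hc}=L^{ch}\cdot K^{hc}$. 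It then remains to establish the degree identity $[L^{ch}:K^{ch}]=[L^{hc}:K^{hc}]$, that is, that this particular completion does not lower the degree.

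That last degree identity is the step I expect to be the main obstacle. One cannot invoke a general ``degrees are unchanged under completion'' principle: completing a finite \emph{inseparable} extension of a henselian field can strip off defect and lower the degree — this is exactly why the second equality must assume separability, and indeed $d_c(L|K,v)=d(L|K,v)$ can fail for inseparable $L/K$. What rescues the first equality is that $K^{ch}$ is the henselization of a complete field while $L^{ch}$ already absorbs the completion $L^c$ of $L$, so the failure of completeness that $L^{hc}/K^{hc}$ adds back over $L^{ch}$ is ``the same'' as the one it adds back over $K^{ch}$; making this precise uses Lemma~\ref{Lc=L.Kc} together with the finer structure of henselizations and completions developed in Section~\ref{secthc}. (For the separable part of $L^{ch}/K^{ch}$ the linear-disjointness argument of the previous paragraph already applies verbatim; the delicate point is the purely inseparable part.)
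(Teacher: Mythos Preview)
Your treatment of the second equality is essentially the paper's: reduce to linear disjointness of $L^h$ from $K^{hc}$ over $K^h$ and invoke that a henselian field is separable-algebraically closed in its completion (Lemma~\ref{hchc=hc}); the paper packages the conclusion as Lemma~\ref{hfin}.

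For the first equality you correctly reduce to $[L^{ch}:K^{ch}]=[L^{hc}:K^{hc}]$ and correctly isolate the purely inseparable part as the obstacle, but you do not actually prove it --- the sentence ``making this precise uses Lemma~\ref{Lc=L.Kc} together with the finer structure of henselizations and completions developed in Section~\ref{secthc}'' is a placeholder, not an argument, and the tools of Section~\ref{secthc} alone do not suffice. The paper handles exactly this step in Lemma~\ref{hcchc}, and the proof for the purely inseparable case rests on an external approximation result, Theorem~\ref{disMT2} (from~[K6]): if some $a\in K^h$ approximates $z\in\tilde K\setminus K$ strictly better than every element of $K$ does, then $K(z)|K$ cannot be purely inseparable. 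Applied with an intermediate field $N$ in place of $K$, this rules out the situation $z\in N.K^{hc}\setminus N.K^c$ for $z$ purely inseparable of degree $p$ over $N$, and hence shows that $L^c$ is linearly disjoint from $K^{hc}$ over $K^c$, giving $[L^{hc}:K^{hc}]=[L^c:K^c]$. The chain $K^c\subseteq K^{ch}\subseteq K^{hc}$ then squeezes $[L^{ch}:K^{ch}]$ between these two equal numbers. The general h-finite case follows by splitting $L|K$ at the maximal separable subextension $L_s$ and multiplying degrees. Without Theorem~\ref{disMT2} or an equivalent, your argument for the first equality has a genuine gap at precisely the point you flagged as delicate.
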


In order to characterize those extensions for which the completion
defect is equal to the ordinary defect we compute the \textbf{defect
quotient}:
\[
d_q(L|K,v)\>:=\>\frac{d(L|K,v)}{d_c(L|K,v)}\>=\>
\frac{[L^h:K^h]}{[L^{hc}:K^{hc}]}\>.
\]
We denote by $[L : K]_{\rm insep}$ the {\bf inseparable degree} of
a finite extension $L|K$, that is, the degree of $L|K$ divided by the
degree of the maximal separable subextension $L_s|K$.

\begin{proposition}                         \label{cens}
For every finite extension $(L|K,v)$,
\[
d_q(L|K) = \frac{[L : K]_{\rm insep}}{[L^c : K^c]_{\rm insep}}\;.
\]
\end{proposition}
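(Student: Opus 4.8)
The plan is to factor $L|K$ through its maximal separable subextension $L_s|K$ --- so that $L_s|K$ is separable of degree $[L:K]_s$ and $L|L_s$ is purely inseparable of degree $[L:K]_{\rm insep}$ --- and then to show that $L_s|K$ contributes the \emph{same} factor to the numerator $[L^h:K^h]$ and to the denominator $[L^{hc}:K^{hc}]$ of $d_q(L|K)$, so that it cancels, leaving the purely inseparable part. Two elementary facts from field theory will be used repeatedly: a separable algebraic extension remains separable algebraic under an arbitrary base change; and a finite purely inseparable extension $A|B$ is linearly disjoint from every separable algebraic extension $C|B$, so that $AC|C$ is again purely inseparable and $[AC:C]=[A:B]$. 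These will be combined with the standard identity $M^h=M\cdot N^h$ for a finite extension $M|N$ (with compatibly chosen henselizations) and with Lemma~\ref{Lc=L.Kc}, which gives $M^c=M\cdot N^c$; note that $[M^c:N^c]\le[M:N]$, so all completions occurring below are finite extensions.

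I would first read off the purely inseparable contributions. From $L^h=L\cdot L_s^h$, together with the fact that $L|L_s$ is purely inseparable while $L_s^h|L_s$ is separable algebraic, one gets $[L^h:L_s^h]=[L:L_s]=[L:K]_{\rm insep}$. Next, $L^c=L\cdot L_s^c$ with $L_s^c=L_s\cdot K^c$; hence $L^c|L_s^c$ is purely inseparable and $L_s^c|K^c$ is separable algebraic, so $L_s^c$ is the separable closure of $K^c$ in $L^c$ and $[L^c:K^c]_{\rm insep}=[L^c:L_s^c]$. Finally, applying $M^h=M\cdot N^h$ with $M=L^c$ and $N=L_s^c$, and using once more that $L^c|L_s^c$ is purely inseparable while $L_s^{ch}|L_s^c$ is separable algebraic, one gets $L^{ch}=L^c\cdot L_s^{ch}$ and $[L^{ch}:L_s^{ch}]=[L^c:L_s^c]$.

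It remains to compare these with the relative degrees of henselizations and of completions of henselizations, which is where Proposition~\ref{ddcdch} enters. Its first assertion, applied to the (finite, hence h-finite) extension $(L|L_s,v)$, gives $d(L^c|L_s^c,v)=d_c(L|L_s,v)$; writing both sides out via the definitions of $d$ and $d_c$ and using that henselizations and completions are immediate (so the indices $(vL:vL_s)$ and $[Lv:L_sv]$ appear on both sides and cancel), this becomes $[L^{hc}:L_s^{hc}]=[L^{ch}:L_s^{ch}]=[L^c:L_s^c]$. Its second assertion, applied to the finite \emph{separable} extension $(L_s|K,v)$, gives $d_c(L_s|K,v)=d(L_s|K,v)$, and the same computation yields $[L_s^{hc}:K^{hc}]=[L_s^h:K^h]$. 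Combining everything,
\[
d_q(L|K)\;=\;\frac{[L^h:K^h]}{[L^{hc}:K^{hc}]}
\;=\;\frac{[L^h:L_s^h]\,[L_s^h:K^h]}{[L^{hc}:L_s^{hc}]\,[L_s^{hc}:K^{hc}]}
\;=\;\frac{[L:L_s]}{[L^c:L_s^c]}
\;=\;\frac{[L:K]_{\rm insep}}{[L^c:K^c]_{\rm insep}}\,,
\]
as claimed. I expect the only genuine work to be in the middle paragraph: one must check carefully that the composita are formed correctly and, above all, that $L^c|L_s^c$ and $L^{ch}|L_s^{ch}$ really are purely inseparable of the asserted degree, which rests on Lemma~\ref{Lc=L.Kc} and the linear disjointness of purely inseparable and separable extensions. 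The conceptual point is simply that the separable factor $[L_s^h:K^h]=[L_s^{hc}:K^{hc}]$ --- which by the second part of Proposition~\ref{ddcdch} is the same on both levels --- cancels in the quotient, leaving exactly $[L:K]_{\rm insep}/[L^c:K^c]_{\rm insep}$.
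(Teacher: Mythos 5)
Your proof is correct and follows essentially the same route as the paper: factor through the maximal separable subextension $L_s$, observe that the separable part contributes equally to $[L^h:K^h]$ and $[L^{hc}:K^{hc}]$ and cancels, and identify the remaining quotient with $[L:L_s]/[L^c:L_s^c]$ via linear disjointness of purely inseparable from separable extensions. The only cosmetic difference is that you cite Proposition~\ref{ddcdch} (which is proved beforehand, so there is no circularity) where the paper invokes its ingredients, Lemma~\ref{hfin} and Lemma~\ref{hcchc}, directly.
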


An h-finite extension $(L|K,v)$ is called \textbf{c-defectless} if
$d_c(L|K,v)=1$ and \textbf{q-defectless} if $d_q(L|K,v)=1$. A valued
field $(K,v)$ will be called a \textbf{c-defectless field} or
\textbf{q-defectless field} if every finite extension $(L|K,v)$ is
c-defectless or q-defectless, respectively. The properties of being
``c-defectless" or ``q-defectless" are weaker than ``defectless" (cf.
Section \ref{secDefect} for the latter notion). Another pair of weaker
properties are inseparably defectless and separably defectless (again,
see Section \ref{secDefect} for these notions).

\parm
In Section \ref{sechF}, we prove the following characterizations:

\begin{theorem}                                    \label{ims}
A valued field $(K,v)$ is a c-defectless field if and only if it is
a separably defectless field.
\end{theorem}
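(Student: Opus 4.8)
The plan is to prove the two implications separately and to reduce both to the behaviour of the defect quotient, using Proposition~\ref{cens} as the bridge between $d$, $d_c$ and the inseparable degrees. The key observation is that, by the definitions, $d_c(L|K,v)=1$ is equivalent to $d(L|K,v)=d_q(L|K,v)$, and by Proposition~\ref{cens} the defect quotient of a finite extension is a purely ``inseparable'' quantity: it equals $[L:K]_{\rm insep}/[L^c:K^c]_{\rm insep}$, hence it is $1$ as soon as $L|K$ is separable. So the separable part of the theory should see no difference between $d$ and $d_c$, which is exactly what a characterization via separably defectless fields predicts.

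For the implication ``separably defectless $\Rightarrow$ c-defectless'', I would take an arbitrary finite extension $(L|K,v)$ and pass to the completion of the henselization, so that $d_c(L|K,v)=d(L^{hc}|K^{hc},v)$. The idea is to split $L^{hc}|K^{hc}$ into its maximal separable subextension $E|K^{hc}$ and the purely inseparable top $L^{hc}|E$. Purely inseparable extensions of a valued field of residue characteristic $p$ have defect-free behaviour of a controlled kind (their degree is absorbed by the $p$-divisible closure of the value group and the residue field), so the whole defect of $L^{hc}|K^{hc}$ is concentrated in the separable part $E|K^{hc}$. Now $K^{hc}$ is henselian and complete; the point is that every finite separable extension of $K^{hc}$ comes, by Krasner's lemma together with the fact that $K^h$ is separable-algebraically closed in $K^{hc}$ (the completion of a henselian field is an immediate, hence in particular separable, extension in the relevant sense --- this is where Lemma~\ref{Lc=L.Kc} and Lemma~\ref{hchc=hc} of Section~\ref{secthc} do the work), from a finite separable extension of $K^h$, i.e. from a finite separable extension of $K$. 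Since $(K,v)$ is separably defectless, that extension is defectless, and defectlessness is preserved under henselization and completion by the immediacy of those steps. Hence $d(E|K^{hc},v)=1$, so $d_c(L|K,v)=1$.

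For the converse, ``c-defectless $\Rightarrow$ separably defectless'', I would argue contrapositively: suppose $(L|K,v)$ is a finite \emph{separable} extension with $d(L|K,v)>1$. By Proposition~\ref{ddcdch} (its second clause, the separable case), $d_c(L|K,v)=d(L|K,v)>1$, so $(K,v)$ is not c-defectless. This direction is essentially immediate once Proposition~\ref{ddcdch} is available; the content is entirely in the forward direction.

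The main obstacle I expect is the claim used in the forward direction that finite separable extensions of the complete henselian field $K^{hc}$ descend to finite separable extensions of $K^h$ with the ``same'' defect data --- in other words, that $K^h$ is dense in $K^{hc}$ and defect-closed-under-separable-extensions in the sense needed. Concretely one must check: (i) a defectless finite separable extension of $(K,v)$ stays defectless after henselization (clear, henselizations being immediate) and after completion (this is exactly the second part of Proposition~\ref{ddcdch}, or a direct Krasner argument); and (ii) every finite separable extension of $K^{hc}$ is of the form $E\cdot K^{hc}$ for some finite separable $E|K^h$ of the same degree, which is a density/Krasner argument but requires that the minimal polynomial of a primitive element can be approximated well enough over $K^h$ --- this is where the completeness of $K^{hc}$ and the immediacy results from Section~\ref{secthc} are essential. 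Handling the interplay of the separable and inseparable parts when $L|K$ is \emph{not} separable, so that one genuinely needs Proposition~\ref{cens} to know the inseparable contribution to $d_q$ is exactly cancelled by the inseparable contribution to $d$, is the second delicate point.
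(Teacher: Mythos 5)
Your converse direction (``c-defectless $\Rightarrow$ separably defectless'') is correct and coincides with the paper's argument: by the second clause of Proposition~\ref{ddcdch} (equivalently Lemma~\ref{hfin}), $d_c=d$ on h-finite \emph{separable} extensions, so c-defectlessness forces every finite separable extension to be defectless. No objection there.

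The forward direction, however, contains a genuine gap. Your plan splits $L^{hc}|K^{hc}$ into the maximal separable subextension $E|K^{hc}$ and the purely inseparable top $L^{hc}|E$, and then asserts that purely inseparable extensions ``have defect-free behaviour of a controlled kind (their degree is absorbed by the $p$-divisible closure of the value group and the residue field), so the whole defect of $L^{hc}|K^{hc}$ is concentrated in the separable part.'' This is false as a general statement: Example~\ref{exampFKS} of the paper is a purely inseparable extension of degree $p$ that is \emph{immediate}, hence has defect $p$; nothing of its degree is absorbed by value group or residue field. Completeness and henselianity of $K^{hc}$ do not by themselves rule this out either (an immediate purely inseparable extension only forces an element into the completion when the relevant set of values is cofinal in the value group). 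Your argument therefore only establishes that the separable part $E|K^{hc}$ is defectless, which does not give $d_c(L|K,v)=1$ when $L|K$ is inseparable. The missing ingredient is precisely Lemma~\ref{csdd}: for a henselian field, separably defectless is equivalent to the completion being (fully) defectless, and the step from ``$K^{hc}$ separably defectless'' to ``$K^{hc}$ defectless'' rests on Theorem 5.1 of [K5], a substantive result tied to the classification of Artin--Schreier defect extensions and their relation to immediate purely inseparable extensions --- not a formal consequence of the definitions. The paper closes the argument by the chain: $K$ separably defectless $\Rightarrow$ $K^h$ separably defectless (Lemma~\ref{dl-hdl}) $\Rightarrow$ $K^{hc}$ defectless (Lemma~\ref{csdd}) $\Rightarrow$ $d_c(L|K,v)=d(L^{hc}|K^{hc},v)=1$ for every finite $L|K$. (Your auxiliary point (ii), descending finite separable extensions of $K^{hc}$ to $K^h$ by approximation of coefficients, is sound and is in fact the Krasner-type argument used inside the proof of Lemma~\ref{csdd}; it is the inseparable part that cannot be dismissed.)
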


\begin{theorem}                                    \label{tcs}
A valued field $(K,v)$ is q-defectless if and only if its completion
is a separable extension (that is, linearly disjoint from the perfect
hull $K^{1/p^{\infty}}$ over $K$). In particular, every complete field
and every valued field of characteristic 0 is q-defectless.
\end{theorem}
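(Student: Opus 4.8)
The plan is to prove Theorem~\ref{tcs} by reducing the statement about $q$-defectlessness of $(K,v)$ to a statement about inseparable degrees of completions, using Proposition~\ref{cens}. Recall that $d_q(L|K)=\frac{[L:K]_{\rm insep}}{[L^c:K^c]_{\rm insep}}$ for every finite extension $(L|K,v)$. Since the inseparable degree $[L^c:K^c]_{\rm insep}$ always divides $[L:K]_{\rm insep}$ (the maximal separable subextension of $L|K$ remains separable after base change to $K^c$, so $[L^c:K^c]_{\rm insep}\le[L:K]_{\rm insep}$, and both are powers of $p$), the condition $d_q(L|K)=1$ is equivalent to $[L^c:K^c]_{\rm insep}=[L:K]_{\rm insep}$. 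Thus $(K,v)$ is $q$-defectless if and only if, for every finite extension $L|K$, base change to the completion preserves the inseparable degree.

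First I would observe that preservation of inseparable degree under $\cdot\otimes_K K^c$ for all finite $L$ is exactly the condition that $K^c|K$ be a separable field extension in the sense of linear disjointness from $K^{1/p^\infty}$. Indeed, in characteristic $p$, the inseparable degree of a finite extension is governed by how the extension interacts with $p$-th roots: if $K^c|K$ is separable then $K^c$ and $L$ are linearly disjoint over $K$ whenever $L|K$ is purely inseparable (since $L\subseteq K^{1/p^\infty}$), which forces $[L^c:K^c]=[L:K]$ for purely inseparable $L$, and this extends to the inseparable degree of arbitrary finite $L$ by passing to the purely inseparable part of $L$ over its maximal separable subextension and using that separable base change does not create inseparability. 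Conversely, if $K^c|K$ is not separable, there is some $a\in K$ with $a^{1/p}\notin K$ but $a^{1/p}\in K^c$ (or more precisely, a finite purely inseparable $L=K(a^{1/p^n})$ that is not linearly disjoint from $K^c$); then $[L^c:K^c]<[L:K]=[L:K]_{\rm insep}$, so $d_q(L|K)>1$ and $(K,v)$ is not $q$-defectless. The main technical point to get right is the passage between ``linear disjointness of $K^c$ and $L$ over $K$ for all purely inseparable finite $L$'' and ``$K^c$ linearly disjoint from all of $K^{1/p^\infty}$''; this is the standard fact that a field extension is separable iff it is linearly disjoint from $K^{1/p}$ (equivalently from $K^{1/p^\infty}$) over $K$, combined with the observation that linear disjointness from $K^{1/p^\infty}$ can be tested on the finitely generated, i.e.\ finite, purely inseparable subextensions.

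For the ``In particular'' clause: if $(K,v)$ is complete then $K^c=K$, so trivially $K^c|K$ is separable and $(K,v)$ is $q$-defectless. If $\chara K=0$ (or more generally $\chara Kv=0$, though the statement only claims characteristic $0$), then every extension is separable, so again $K^c|K$ is separable; alternatively one can invoke the Lemma of Ostrowski, which already gives $d(L|K,v)=1=d_c(L|K,v)$ in that case, hence $d_q\equiv 1$.

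The step I expect to be the main obstacle is the direction showing that separability of $K^c|K$ implies $q$-defectlessness for \emph{all} finite extensions $L|K$, not merely purely inseparable ones: one must argue that if $L_s|K$ is the maximal separable subextension of $L|K$ then $[L^c:(L_s)^c]_{\rm insep}=[L:L_s]_{\rm insep}=[L:K]_{\rm insep}$, which requires knowing that $(L_s)^c$ is still ``separable enough'' over $L_s$ — i.e.\ that separability of the completion is inherited by finite separable extensions of the base — so that the purely inseparable part $L|L_s$ does not collapse after completion. This inheritance should follow from the fact that $(L_s)^c = L_s\cdot K^c$ (completions commute with finite separable base change up to the relevant degrees, cf.\ Lemma~\ref{Lc=L.Kc}) together with the permanence of the ``linearly disjoint from the perfect hull'' property under finite separable extension of the ground field; making this precise is where the real work lies.
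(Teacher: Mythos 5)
Your overall strategy is viable: Proposition~\ref{cens} does reduce the theorem to the question of when passing to the completion preserves inseparable degrees, your converse direction (a finite purely inseparable $L$ not linearly disjoint from $K^c$ gives $d_q(L|K)>1$) is correct, and the ``in particular'' clause is handled properly. But the step you flag at the end is not a formality --- it is the actual crux, and as written your forward direction is incomplete. In your decomposition $K\subseteq L_s\subseteq L$ the purely inseparable layer sits \emph{on top} of $L_s$, so to conclude $[L^c:L_s^c]=[L:L_s]$ you need that $L_s^c|L_s$ is again separable. This inheritance lemma is true and can be proved as follows: $L_s^c=L_s.K^c$ by Lemma~\ref{Lc=L.Kc}; since $L_s|K$ is finite separable one has $L_s=K.L_s^p$ and hence $L_s^{1/p}=L_s.K^{1/p}$, so an $L_s$-basis of $L_s^{1/p}$ may be chosen inside $K^{1/p}$; elements of $K^{1/p}$ independent over $L_s$ are independent over $K$ (as $L_s|K$ is separable), and they stay independent over $L_s.K^c$ because $L_s.K^c|K$ is separable (separable algebraic over the separable extension $K^c|K$, then use MacLane's criterion). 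Until something like this is supplied, the implication ``$K^c|K$ separable $\Rightarrow$ $K$ q-defectless'' is not proved.

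The paper sidesteps this lemma entirely by a different reduction. Since $d_q$ is multiplicative (equation~(\ref{multcq})) and always a power of $p$, $K$ is q-defectless if and only if every finite \emph{normal} extension $N|K$ is q-defectless; a normal extension admits an intermediate field $N_0$ with $N_0|K$ purely inseparable and $N|N_0$ separable, and the separable top layer has $d_q=1$ by Lemma~\ref{hfin}. This places the purely inseparable part at the \emph{bottom} of the tower, directly over the original $K$, so only the separability of $K^c|K$ itself is ever invoked. Your route yields a clean statement purely in terms of inseparable degrees of completions (essentially re-deriving Proposition~\ref{cens} along the way); the paper's route buys a shorter argument with no auxiliary permanence lemma. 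Either works once the inheritance step is written out.
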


\parm
A valued field extension $(F|K,v)$ of finite transcendence degree is
called \textbf{without transcendence defect} if equality holds in the
{\bf Abhyankar Inequality}
\begin{equation}                            \label{wtdgeq}
\trdeg F|K \>\geq\> \trdeg Fv|Kv \,+\, \rr vF/vK\;,
\end{equation}
where for any ordered abelian group $G$, $\rr \ G:=\dim_{\Q}G\otimes\Q$
denotes the maximal number of rationally independent elements in $G$;
this is called the \textbf{rational rank} of $G$. In particular in the
case where $v$ is trivial on $K$, valuations without transcendence
defect are also called {\bf Abhyankar valuations}.

Every extension $(F|K,v)$ without transcendence defect admits a
\textbf{standard valuation transcendence basis}, that is, a
transcendence basis $\{x_i,y_j \mid i\in I, j\in J\}$ such
that
\begin{equation}                           \label{valindep}
\left.\begin{array}{l}
\mbox{the values $vx_i\,$, $i\in I$, are rationally independent over
$vK$,}\\
\mbox{the residues $y_jv$, $j\in J$, are algebraically independent over
$Kv$.}
\end{array}\right\}
\end{equation}
(The second condition implicitly says that $vy_j=0$ for all $j\in J$.)
Indeed, if the elements $x_i$ are chosen such that their values form a
maximal set of elements in $vF$ rationally independent modulo $vK$, and
the elements $y_j$ are chosen such that their residues form a
transcendence basis of $Fv|Kv$, then by Lemma~\ref{K(T)}, the elements
$x_i,y_j$, $i\in I$, $j\in J$, are algebraically independent over $K$.
If equality holds in the Abhyankar Inequality, then their number equals
the transcendence degree, and so they form a standard valuation
transcendence basis.

\pars
As for h-finite extensions, a notion of defect can be defined for every
valued function field $(F|K,v)$ without transcendence defect. More
generally, we consider \textbf{subhenselian function fields}, that is,
extensions $(F|K,v)$ for which $(F,v)^h$ is the henselization of some
valued function field $(F_0|K,v)$. Note that in this case, $F|K(T)$
is an h-finite extension for every transcendence basis $T$ of $F|K$ and
hence we may consider the defect $d(F|K(T),v)$. For subhenselian
function fields without transcendence defect, the defect
can be introduced as:

\begin{equation}                                    \label{Tdef}
d(F|K,v) \>:=\> \sup_T\>d(F|K(T),v)
\end{equation}
where the supremum is taken over all transcendence bases of $(F|K,v)$.
The defect $d(F|K,v)$ is indeed a finite number whenever $(F|K,v)$ is
without transcendence defect, because it is equal to $d(F|K(T),v)$
whenever $T$ is a standard valuation transcendence basis. The following
theorem shows the \bfind{finiteness} of the defect and its
\bfind{independence} of the choice of the standard valuation
transcendence basis.

\begin{theorem}[Finiteness and Independence Theorem]     \label{def}
Take a subhenselian function field $(F|K,v)$ without transcendence
defect. Then for every standard valuation transcendence basis $T$ of
$F|K$,
\begin{equation}                                 \label{defe}
d(F|K,v) \>=\> d(F|K(T),v) < \infty \;.\label{-m}
\end{equation}
Moreover, there exists a finite extension $K'$ of $K$ such that for
every algebraic\ extension $L$ of $K$ containing $K'$ we have:
\begin{enumerate}
  \item for every standard valuation transcendence basis $T$ of
        $(F|K,v)$,\\ the extension $(L.F|L(T),v)$ is defectless,
  \item ${\displaystyle d(F|K,v) = \frac{d(L|K,v)}{d(L.F|F,v)} =
        \max_{N|K \mbox{\rm\scriptsize\ finite}}\ \
        \frac{d(N|K,v)}{d(N.F|F,v)}}\;$.
\end{enumerate}
\end{theorem}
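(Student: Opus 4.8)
The plan is to reduce everything to the h-finite case by passing to henselizations, and then to use a suitable finite base extension $K'$ that kills all the transcendence-related defect. First I would fix a standard valuation transcendence basis $T=\{x_i,y_j\}$. By the assumption of no transcendence defect, $vK(T)=vK\oplus\bigoplus\Z vx_i$ and $K(T)v=Kv(y_jv)$ is a rational function field; in particular the extension $(K(T)|K,v)$ is defectless (indeed, each $x_i$ and each $y_j$ contributes exactly one unit to the value group or residue field respectively, so the fundamental inequality is an equality). Since $(F|K,v)$ is subhenselian, $F^h$ is the henselization of a valued function field $(F_0|K,v)$; choosing $T$ inside $F_0$, the extension $F^h|K(T)^h$ is finite, so $d(F|K(T),v)=d(F^h|K(T)^h,v)=[F^h:K(T)^h]/\big((vF:vK(T))[Fv:K(T)v]\big)$ is a well-defined power of the residue characteristic, hence finite. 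To get \eqref{defe} I must show this value does not depend on the choice of standard valuation transcendence basis $T$; this will come out of part~2.

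For the finite extension $K'$: I would invoke the structure of the algebraic closure. Over $\tilde K$, the function field $\tilde K.F$ has a standard valuation transcendence basis, and the value group $v(\tilde K.F)/v\tilde K$ is torsion (in fact trivial, since $v\tilde K$ is divisible) while $(\tilde K.F)v$ over $\tilde Kv$ is a rational function field in the $y_jv$; so $(\tilde K.F|\tilde K(T),v)$ is defectless, indeed trivial-defect, for structural reasons. Now everything relevant — the finitely many generators of $vF/vK$ modulo the $vx_i$, the finitely many residues generating $Fv/Kv(y_jv)$, and the finitely many coefficients witnessing the relevant finite extensions of henselizations being defectless over $\tilde K$ — is defined over some finite subextension $K'|K$. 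For any algebraic $L\supseteq K'$, part~1 then holds: $(L.F|L(T),v)$ is defectless because the obstruction (the defect of $F$ over $K(T)$) has already been trivialized at the level of $K'$.

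For part~2 I would use multiplicativity of the defect in towers, which holds for h-finite extensions (this is the standard "Lemma of Ostrowski / tower formula" referenced in Section~\ref{secDefect}). Consider the tower $K(T)\subseteq L(T)\subseteq L.F$ and also $K(T)\subseteq F\subseteq L.F$ (using $F\supseteq K(T)$). Multiplicativity gives
\[
d(L.F|K(T),v)=d(L.F|L(T),v)\cdot d(L(T)|K(T),v)=d(L.F|F,v)\cdot d(F|K(T),v).
\]
By part~1 the first factor on the left product is $1$; and $d(L(T)|K(T),v)=d(L|K,v)$ because adjoining the standard transcendence basis $T$ is defectless and changes neither the value-group index nor the residue-field degree (one checks $vL(T)/vK(T)\cong vL/vK$ and $L(T)v|K(T)v$ is the base change of $L|K$, which has the same degree). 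Therefore $d(F|K(T),v)=d(L|K,v)/d(L.F|F,v)$, and since the left-hand side is by definition the candidate value of $d(F|K,v)$ for this $T$, while the right-hand side manifestly does not involve $T$, we simultaneously obtain the independence statement \eqref{defe} and the first equality of part~2. For the $\max$ formula, note that every finite $N|K$ satisfies $d(N|K,v)/d(N.F|F,v)\le d(F|K,v)$ by the same tower computation applied with $N$ in place of $L$ (dropping the possibly-nontrivial defect $d(N.F|N(T),v)\ge 1$ to the numerator side), with equality once $N\supseteq K'$; so the maximum is attained and equals $d(F|K,v)$.

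The main obstacle I anticipate is the careful bookkeeping in part~1: proving that a \emph{single} finite $K'$ simultaneously trivializes the defect for \emph{every} standard valuation transcendence basis $T$, not just for one fixed $T$. The point is that any two standard valuation transcendence bases are related over $\tilde K$ in a controlled way (they induce the same completion of the value group modulo $v\tilde K$ and the same algebraic closure of the residue field), so the "defect-trivializing data" can be chosen uniformly; but making this uniformity precise — rather than producing a $K'$ depending on $T$ — is the delicate step, and it is what genuinely uses the \emph{standard} (as opposed to arbitrary) nature of the valuation transcendence basis.
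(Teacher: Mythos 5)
Your overall architecture matches the paper's: reduce to h-finite extensions via henselizations, use $d(L(T)|K(T),v)=d(L|K,v)$ (Lemma~\ref{lpr}) and multiplicativity of the defect in the two towers $K(T)\subseteq L(T)\subseteq L.F$ and $K(T)\subseteq F\subseteq L.F$, and obtain $K'$ by descending from the algebraic closure. But there is a genuine gap at the single most important step. You assert that $(\tilde{K}.F\,|\,\tilde{K}(T),v)$ is defectless ``for structural reasons,'' citing the divisibility of $v\tilde{K}$ and the shape of the residue field extension. Those facts describe the value groups and residue fields of $\tilde{K}(T)$ and $\tilde{K}.F$, but the defect is precisely the discrepancy between $[(\tilde{K}.F)^h:\tilde{K}(T)^h]$ and the product of ramification index and inertia degree, and nothing you have said controls that degree. (Example~\ref{exampFKS} shows that immediate extensions of degree $p$ with defect $p$ exist; that they cannot occur inside $\tilde{K}.F\,|\,\tilde{K}(T)$ is a theorem, not a structural triviality.) The paper obtains this defectlessness from the Generalized Stability Theorem~\ref{thmGST}, applied to $\tilde{K}(T)$ over the defectless field $\tilde{K}$ together with Lemma~\ref{dl-hdl}, and explicitly states that the whole proof ``heavily depends'' on that theorem. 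Without this input your construction of $K'$ has no starting point.

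Two smaller points. First, the descent from $\tilde{K}$ to a finite $K'$ is not just a matter of the relevant data being ``defined over some finite subextension'': one needs that $d(L_1.F|L_1)\leq d(\tilde{K}.F|\tilde{K}(T))$ once $L_1$ contains a suitable finitely generated field, and the delicate part is controlling $[(L_1.F)^h:L_1^h]$, which requires knowing that the finitely many coefficients from the henselization that are involved already lie in the henselization of a finitely generated subfield; this is Lemma~\ref{fofdefh} (proved in [K8]), feeding into Lemma~\ref{eeu}, and the paper warns that it is less obvious than it looks. Second, your closing worry about producing one $K'$ uniform in $T$ is resolved in the paper not by choosing the trivializing data uniformly, but by first proving the independence $d(F|K(T_1))=d(F|K(T_2))$ (compare both against $L_0=K_{T_1}.K_{T_2}$ from Lemma~\ref{d3} and divide by $d(L_0.F|F)$); once that identity is available, $K'=K_{T_1}$ for a single fixed $T_1$ automatically satisfies assertion~1 for every other standard valuation transcendence basis $T_2$, since $d(L.F|L(T_2))=d(L.F|K(T_2))/d(L(T_2)|K(T_2))=1$ by the computation you already have. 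So that step is easier than you fear, whereas the Stability Theorem step is harder than you assume.
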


The proof of this theorem is given in Section \ref{secDefFF}. It heavily
depends on the following result proved in [K1] and [K4]:

\begin{theorem}[Generalized Stability Theorem]     \label{thmGST}
Let $(F|K,v)$ be a valued function field without transcendence defect.
If $(K,v)$ is a defectless field, then also $(F,v)$ is a defectless
field. The same holds for ``inseparably defectless" in place of
``defectless". If $vK$ is cofinal in $vF$, then it also holds for
``separably defectless" in place of ``defectless".
\end{theorem}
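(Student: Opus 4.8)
\medskip
\noindent\textbf{Proof plan.}
The plan is to reduce, in several steps, to two basic transcendence-degree-one situations — a value-transcendental and a residue-transcendental simple extension — for which the statement is the classical Stability Theorem in its strong ``defectless field'' form. By the Lemma of Ostrowski every defect is a power of $\chara Kv$, so we may assume $\chara Kv=p>0$; otherwise all three assertions are trivially true. Since $d(F'|F,v)=d((F')^h|F^h,v)$ for every finite $F'|F$, and finite (respectively finite separable, respectively finite purely inseparable) extensions of $F^h$ are in defect-, value-group- and residue-field-preserving correspondence with those of $F$, the field $(F,v)$ is defectless / inseparably defectless / separably defectless exactly when $(F^h,v)$ is; likewise for $(K,v)$ and $(K^h,v)$, while $vK^h=vK$ remains cofinal in $vF^h=vF$. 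As $(F^h|K^h,v)$ is the henselization of a valued function field without transcendence defect over $(K^h,v)$, we may assume from the outset that $K$ and $F$ are henselian.

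Next I would reduce to transcendence degree $1$ by induction on $n:=\trdeg F|K$. Fix a standard valuation transcendence basis of $F|K$ and let $x$ be one of its members; then $x$ is either \emph{value-transcendental} over $(K,v)$ (so $vx$ is rationally independent over $vK$, $K(x)v=Kv$ and $vK(x)=vK\oplus\Z vx$) or \emph{residue-transcendental} (so $vx=0$, $xv$ is transcendental over $Kv$ and $vK(x)=vK$). A routine computation using additivity of $\trdeg$, of $\rr$ and of residue transcendence degree along $K\subseteq K(x)\subseteq F$ shows that both $K(x)|K$ and $F|K(x)$ are again valued function fields without transcendence defect, of transcendence degrees $1$ and $n-1$, and that cofinality of $vK$ in $vF$ descends to cofinality of $vK$ in $vK(x)$ and of $vK(x)$ in $vF$. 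Since the defect is multiplicative in towers of finite extensions, a finite extension of a defectless (respectively inseparably, respectively separably) defectless field is again such; so, granting the transcendence-degree-one case, $(K(x),v)$ is defectless of the appropriate type, and the induction hypothesis applied to $F|K(x)$ finishes the induction. It remains to prove: if $(K,v)$ is a henselian defectless field (respectively inseparably defectless, respectively separably defectless with $vK$ cofinal in $vK(x)$) and $x$ is value- or residue-transcendental over $(K,v)$, then $(K(x),v)$ is a field of the same type.

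In the \emph{residue-transcendental} case $v$ is the Gauss valuation relative to $x$: $vK(x)=vK$ and $K(x)v=Kv(xv)$. Given a finite $F'|K(x)$, I would pass by multiplicativity to a primitive extension, write its defining polynomial in $K[x][Y]$, analyse its reduction modulo the maximal ideal, apply Hensel's lemma, and perform a finite base extension $K'|K$ — harmless because finite extensions of the defectless field $K$ are defectless, so that $d(F'|K(x))$ is recovered from $d(F'.K'|K'(x))$ — after which $F'.K'$ takes the shape of a residue-transcendental rational function field over $K'$ and the defect is forced to be $1$; any residual defect one meets along the way is already a defect over a finite extension of $K$, hence trivial. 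The purely inseparable and separable statements follow by running the same reduction inside purely inseparable, respectively separable, towers; here $vK(x)=vK$ is automatically cofinal in itself, so no extra hypothesis is needed.

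The \emph{value-transcendental} case is the crux. Adjoining a suitable root $x^{1/m}$ — still value-transcendental over $K$ — to absorb the growth of the value group, and passing to a finite base extension of $K$ (harmless since $K$ is defectless) to absorb the growth of the residue field, one reduces to showing that $K(x)^h$, value-transcendental over a henselian defectless field $K$, admits no proper immediate algebraic extension; such an extension would be assembled from immediate degree-$p$ extensions, and the point is that each such extension must already be ``defined over $K$'', contradicting the fact that a henselian defectless field has no proper immediate algebraic extension. I expect this to be the main obstacle: in residue characteristic $p$ a value-transcendental extension can a priori produce genuinely new Artin--Schreier defect extensions, and ruling this out requires the fine structure theory of defects (dependent versus independent defect, the classification of immediate degree-$p$ extensions) together with the Matignon-style completion-defect estimates referred to in [M], [O] — which is exactly what is carried out in [K1] and [K4]. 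In this analysis the cofinality hypothesis enters precisely for the separable statement, guaranteeing that the approximation type underlying a would-be immediate Artin--Schreier extension of $K(x)^h$ is already controlled by $K$, while an explicit Newton-polygon computation reduces the purely inseparable statement to the inseparable defectlessness of $K$.
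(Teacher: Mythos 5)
First, note that the paper does not prove Theorem~\ref{thmGST} at all: it is stated as an external input, with the proof attributed to [K1] and [K4], and the paper only \emph{uses} it (in Lemma~\ref{d3}, Proposition~\ref{2.7}, and elsewhere). So there is no internal proof to measure your attempt against; the only fair comparison is with the actual proofs in those references.

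Measured on its own terms, your proposal is a plan rather than a proof, and the gap sits exactly where the theorem lives. The preliminary reductions are fine and standard: passing to henselizations via Lemma~\ref{dl-hdl}, inducting on the transcendence degree along a standard valuation transcendence basis, checking that cofinality descends, and isolating the two one-variable cases (value-transcendental and residue-transcendental) --- this skeleton agrees with the architecture of [K4]. But both one-variable cases are left unproved. The residue-transcendental case \emph{is} the Grauert--Remmert stability theorem; ``write a primitive polynomial, reduce it, apply Hensel's lemma, make a finite base change, after which the defect is forced to be $1$'' describes the goal, not an argument --- the whole difficulty is that a defect of a finite extension of the Gauss-valued field $(K(x),v)$ is not visible in any naive reduction of a defining polynomial, and controlling it requires either the Newton-polygon and nonarchimedean-analysis machinery of [BGR] and [O] or the valuation-theoretic analysis of immediate extensions in [K4]. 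For the value-transcendental case you state explicitly that the key step ``requires the fine structure theory of defects \dots\ which is exactly what is carried out in [K1] and [K4]'' --- that is, you defer the crux to the very references that the paper cites for the entire theorem; an argument that invokes the source of its own statement for its essential step is not a proof. If your intention was only to explain why the theorem is plausible and where its difficulty is concentrated, the outline is reasonable and correctly locates the hard points, including where the cofinality hypothesis should enter for the separably defectless version; but as a proof it has a hole the size of the theorem.
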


This theorem is a generalization of a result of Grauert and Remmert
[G-R] which is restricted to the case of algebraically closed complete
ground fields of rank 1. A first generalization of the result was given
by Gruson [GRU]; an improved presentation of it can be found in the book
[BGR] of Bosch, G\"untzer and Remmert. Further generalizations are due
to M. Matignon and J. Ohm; see also [GMP]. In [O], Ohm arrived at a
version of Theorem~\ref{thmGST} with the restriction that $\trdeg
F|K=\trdeg Fv|Kv$. The work of all authors mentioned above is based on
methods of nonarchimedean analysis. In contrast, the proofs given in
[K1] and [K4] are purely valuation theoretic.

\pars
Theorem~\ref{def} was independently obtained by Ohm [O] in the case
where $\trdeg F|K=\trdeg Fv|Kv$ by using his version of the stability
theorem. He gave this result the name {\sl Independence Theorem} since
it shows the independence of the defect from the chosen standard
valuation transcendence basis.

Another special case was proved by Sudesh Khanduja in [Kh]. She
considered simple transcendental extensions $(K(x)|K,v)$
satisfying the condition $\rr vK(x)|vK=1=\trdeg K(x)|K$.

\parm
The completion defect and defect quotient of valued function fields
without transcendence defect may be defined similarly as it was done for
the defect. For a subhenselian function field $(F|K,v)$ without
transcendence defect we set
\[
d_c(F|K,v):=\sup_T d_c(F|K(T),v)\;\;\mbox{\ \ and\ \ }\;\;
d_q(F|K,v):=\sup_T d_q(F|K(T),v)\>,
\]
where the supremum is taken over all transcendence bases of $(F|K,v)$.
The same finiteness and independence as for the defect
(Theorem~\ref{def}) also hold for the completion defect and defect
quotient:

\begin{theorem}                                           \label{def'}
Take a subhenselian function field $(F|K,v)$ without transcendence
defect over $K$. Then for every standard valuation transcendence basis
$T$ of $F|K$,
\begin{equation}                               \label{m}
d_c(F|K,v) \>=\> d_c(F|K(T),v)\;\;\mbox{\ \ and\ \ }\;\;
d_q(F|K,v) \>=\> d_q(F|K(T),v)\;.
\end{equation}
Further,
\begin{equation}                                      \label{mu}
d(F|K,v) \>=\> d_c(F|K,v)\cdot d_q(F|K,v)\;.
\end{equation}
Assume in addition that $vK$ is cofinal in $vF$. If $K'$ is chosen as in
the assertion of Theorem~\ref{def}, then for every finite extension $L$
of $K$ containing $K'$,
\begin{eqnarray}
d_c(F|K,v) & = & \frac{d_c(L|K,v)}{d_c(L.F|F,v)}
\>=\> \max_{N|K \mbox{\rm\scriptsize\ finite}}
\frac{d_c(N|K,v)}{d_c(N.F|F,v)}                         \label{m1}\\
d_q(F|K,v) & = & \frac{d_q(L|K,v)}{d_q(L.F|F,v)}
\>=\> \max_{N|K \mbox{\rm\scriptsize\ finite}}
\frac{d_q(N|K,v)}{d_q(N.F|F,v)}\;.                      \label{m3}
\end{eqnarray}
\end{theorem}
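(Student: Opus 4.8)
The plan is to derive everything from Theorem~\ref{def} together with Propositions~\ref{ddcdch} and~\ref{cens}, reducing each assertion to the corresponding statement about the ordinary defect. First I would establish the multiplicativity~(\ref{mu}) at the level of a \emph{fixed} transcendence basis: for any transcendence basis $T$ of $F|K$ the extension $(F|K(T),v)$ is h-finite, so by definition $d(F|K(T),v)=d_c(F|K(T),v)\cdot d_q(F|K(T),v)$ holds tautologically. The point is then to promote this to the supremum. I would argue that if $T$ is a \emph{standard} valuation transcendence basis, then $d(F|K(T),v)=d(F|K,v)$ by Theorem~\ref{def}, and since $d_c(F|K(T),v)\le d(F|K(T),v)$ and $d_q(F|K(T),v)\le d(F|K(T),v)$ are bounded, while for \emph{every} $T'$ one has $d_c(F|K(T'),v)\,d_q(F|K(T'),v)=d(F|K(T'),v)\le d(F|K,v)$, the suprema defining $d_c(F|K,v)$ and $d_q(F|K,v)$ are each attained, and attained \emph{simultaneously} at a standard basis. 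Concretely: $d_c(F|K,v)\cdot d_q(F|K,v)\ge d_c(F|K(T),v)\cdot d_q(F|K(T),v)=d(F|K,v)$ for standard $T$, while the reverse inequality $d_c(F|K,v)\cdot d_q(F|K,v)\le d(F|K,v)$ needs the fact that a basis maximizing $d_c$ and one maximizing $d_q$ can be taken to be the same standard basis. This is the first place I expect to have to work: showing that $d_c$ and $d_q$ each reach their supremum already on a standard valuation transcendence basis, i.e.\ the analogue of the Finiteness and Independence Theorem for the two weaker defects. I would handle this by the same mechanism used to prove Theorem~\ref{def}: pass to a finite extension $K'$ of $K$ over which $(F|K(T),v)$ becomes defectless for every standard $T$ (part~1 of Theorem~\ref{def}); over such an $L\supseteq K'$ one has $d(L.F|L(T),v)=1$, hence $d_c(L.F|L(T),v)=d_q(L.F|L(T),v)=1$, and then base-change formulas (below) pull this back to $K$.

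Next I would prove the independence statement~(\ref{m}), namely $d_c(F|K,v)=d_c(F|K(T),v)$ and $d_q(F|K,v)=d_q(F|K(T),v)$ for \emph{every} standard valuation transcendence basis $T$. Granting that the suprema are attained on standard bases, it suffices to show that the value of $d_c(F|K(T),v)$ (resp.\ $d_q$) does not depend on which standard $T$ is chosen. Here I would use~(\ref{mu}) together with Theorem~\ref{def}: since $d(F|K(T),v)$ is independent of the standard $T$, it is enough to show that one of $d_c(F|K(T),v)$, $d_q(F|K(T),v)$ is independent of the standard $T$. For $d_q$, I would invoke Proposition~\ref{cens}: $d_q(F|K(T),v)=[F:K(T)]_{\rm insep}/[F^c:K(T)^c]_{\rm insep}$ — wait, here $F|K(T)$ need not be finite, so I would instead apply Proposition~\ref{cens} to the h-finite extension $F^h|K(T)^h$, or rather observe that $d_q(F|K(T),v)=[F^h:K(T)^h]/[F^{hc}:K(T)^{hc}]$ and that both the numerator and the denominator are themselves independent of the choice of standard $T$ by the Independence Theorem applied to $d$ and to $d_c=d/d_q$. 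I expect the cleanest route is: prove independence of $d_c$ first (it is the more robust of the two, being tied to completions of henselizations, to which the Generalized Stability Theorem and the argument of Theorem~\ref{def} apply \emph{verbatim} once $vK$ is cofinal in $vF$ — but the unconditional independence~(\ref{m}) must be obtained without the cofinality hypothesis, so I would instead deduce independence of $d_c$ from independence of $d$ and of $d_q$, and independence of $d_q$ from Proposition~\ref{cens} applied to $K(T)^h$).

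Finally, for the base-change formulas~(\ref{m1}) and~(\ref{m3}) under the extra hypothesis that $vK$ is cofinal in $vF$, I would run the argument of part~2 of Theorem~\ref{def} in parallel for $d_c$ and $d_q$. The key identities are multiplicativity of each defect in towers and its behaviour under base change: $d_c(L.F|K(T),v)=d_c(L.F|F,v)\,d_c(F|K(T),v)=d_c(L.F|L(T),v)\,d_c(L(T)|K(T),v)$, and similarly $d_c(L(T)|K(T),v)=d_c(L|K,v)$ because $T$ consists of elements with rationally independent values / algebraically independent residues, so adjoining $T$ does not change the completion-defect of an algebraic extension (this uses that $L(T)^{hc}=L^{hc}(T)$-type statements, which follow from Lemma~\ref{Lc=L.Kc} and the fact that a standard transcendence basis generates an extension that is ``rigid'' for completions). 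Choosing $L\supseteq K'$ so that $d_c(L.F|L(T),v)=1$ by part~1 of Theorem~\ref{def}, these collapse to $d_c(F|K,v)=d_c(F|K(T),v)=d_c(L|K,v)/d_c(L.F|F,v)$, and the $\max$ over finite $N|K$ is handled exactly as in Theorem~\ref{def} by noting the quotient is monotone and stabilizes at $L=K'$ (or any $N\supseteq K'$). The same verbatim for $d_q$. The main obstacle I anticipate throughout is the first one: carefully verifying that the supremum defining $d_c(F|K,v)$ and $d_q(F|K,v)$ is attained on a \emph{common} standard valuation transcendence basis — equivalently, proving the analogue of Theorem~\ref{def} for the two weaker defects from scratch rather than just quoting it — since the multiplicativity~(\ref{mu}) and the rest of the theorem hinge on that fact, and it is the only step that is not a formal consequence of already-stated results.
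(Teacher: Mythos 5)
Your overall architecture (reduce to standard bases via multiplicativity of $d_c$ and $d_q$, then deduce (\ref{mu}) and the base-change formulas from Theorem~\ref{def}) matches the paper's, and you correctly identify the crux: the independence of $d_c(F|K(T),v)$ and $d_q(F|K(T),v)$ from the choice of standard valuation transcendence basis $T$. But your plan for that crux does not close. The route you finally settle on --- independence of $d_q$ from Proposition~\ref{cens} applied to $K(T)^h$, then independence of $d_c$ from $d_c=d/d_q$ --- leaves the real work undone: Proposition~\ref{cens} expresses $d_q(F|K(T),v)$ as $[F^h:K(T)^h]_{\rm insep}/[F^{hc}:K(T)^{hc}]_{\rm insep}$, and you give no argument that either of these inseparable degrees is independent of $T$; your earlier alternative (numerator handled by the Independence Theorem for $d$, denominator by that for $d_c$) is circular, since independence of $d_c$ is exactly what is being proved. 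Likewise, the base-change identity $d_c(L(T)|K(T),v)=d_c(L|K,v)$ that you invoke for (\ref{m1}) is not a formal consequence of $L(T)^{hc}=L^{hc}(T)$-type statements; it is Lemma~\ref{lpr'} of the paper, whose proof needs genuine input (and in the non-cofinal case the identity is actually different: there $d_c(L(T)|K(T),v)=d(L|K,v)$ and $d_q(L(T)|K(T),v)=1$).

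The missing ingredient in both places is Proposition~\ref{2.7} (the q-defectless half of Theorem~\ref{313}), which rests on the Generalized Stability Theorem for inseparably defectless fields. The paper argues by cases. If $vK$ is not cofinal in $vF$, then $K(T)$ is itself q-defectless, so $d_q(F|K(T),v)=1$ and $d_c(F|K(T),v)=d(F|K(T),v)=d(F|K,v)$, independent of $T$. If $vK$ is cofinal, then $K^{hc}$ is complete, hence q-defectless by Theorem~\ref{tcs}, and Proposition~\ref{2.7} shows $K^{hc}(T)$ is q-defectless; this yields
\[
d_c(F|K(T),v)\>=\>d(F^{hc}|K(T)^{hc},v)\>=\>d_c(F.K^{hc}|K^{hc}(T),v)\>=\>d(F.K^{hc}|K^{hc}(T),v)\>=\>d(F.K^{hc}|K^{hc},v)\,,
\]
an ordinary defect of a subhenselian function field over the fixed base $K^{hc}$, to which Theorem~\ref{def} applies and kills the dependence on $T$; independence of $d_q$ then follows from $d_q=d/d_c$. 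Without this detour through the q-defectlessness of $K(T)$ resp.\ $K^{hc}(T)$ (which also underlies Lemma~\ref{lpr'}), the independence claims in (\ref{m}) --- and hence (\ref{mu}), whose proof needs a single standard basis realizing both suprema --- remain unproved.
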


We will prove Theorem~\ref{def'} in Section~\ref{secSHFF}, together with
the following ``q-defectless and c-defectless versions" of Theorem
\ref{thmGST}.

\begin{theorem}                             \label{313}
Take a subhenselian function field $(F|K,v)$ without transcendence
defect.
\sn
a) \ If $(K,v)$ is a q-defectless field or $vK$ is not cofinal in $vF$,
then $d_q(F|K,v)=1$ and $(F,v)$ is a q-defectless field.
\sn
b) \ If $vK$ is cofinal in $vF$ and $(K,v)$ is a c-defectless field,
then $d_c(F|K,v)=1$ and $(F,v)$ is a c-defectless field.
\end{theorem}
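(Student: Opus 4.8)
The plan is to deduce the theorem from the characterizations in Theorems~\ref{ims} and \ref{tcs}, the Generalized Stability Theorem~\ref{thmGST}, and — above all — the independence and product relations collected in Theorem~\ref{def'}, which I treat as already available. Two elementary facts will be used throughout. First, $d_c$ and $d_q$ are always $\geq 1$ (for an h-finite extension as well as for a subhenselian function field without transcendence defect): for $d_c$ this is the Lemma of Ostrowski, and for $d_q$ it follows from $G^{hc}=G^h\cdot E(T)^{hc}$ (Lemma~\ref{Lc=L.Kc}), which forces $[G^{hc}:E(T)^{hc}]\leq[G^h:E(T)^h]$. Second, $d$, $d_c$ and $d_q$ are multiplicative in a tower $E|F|K$ in which $(F|K,v)$ is a subhenselian function field without transcendence defect and $E|F$ is finite: a standard valuation transcendence basis $T$ of $F|K$ is also one of $E|K$, so by Theorem~\ref{def'} and the multiplicativity of degrees, ramification indices and residue degrees in the towers $K(T)^h\subseteq F^h\subseteq E^h$ and $K(T)^{hc}\subseteq F^{hc}\subseteq E^{hc}$, one gets $d(E|K,v)=d(E|F,v)\,d(F|K,v)$, and likewise for $d_c$ and $d_q$.

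I would then reduce both assertions to the corresponding statements about the defect of $(F|K,v)$ itself. Suppose we have shown $d_q(F|K,v)=1$ for every subhenselian function field $(F|K,v)$ without transcendence defect satisfying the hypothesis of a) (respectively $d_c(F|K,v)=1$ for every one satisfying the hypothesis of b)). Given an arbitrary finite extension $E|F$, the field $E$ is again a subhenselian function field over $K$ without transcendence defect; the $K$-conditions ``$(K,v)$ q-defectless'' and ``$(K,v)$ c-defectless'' are unaffected by passing to $E$, and since $vE/vF$ is torsion with $vF\subseteq vE$, cofinality of $vK$ in $vF$ passes to $vE$ and non-cofinality of $vK$ in $vF$ passes to $vE$ as well, so $(E|K,v)$ again satisfies the relevant hypothesis. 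Hence $d_q(E|K,v)=1$ (resp.\ $d_c(E|K,v)=1$), and multiplicativity forces $d_q(E|F,v)=1$ (resp.\ $d_c(E|F,v)=1$); as $E|F$ was arbitrary, $(F,v)$ is q-defectless (resp.\ c-defectless). It therefore remains to establish the equalities $d_q(F|K,v)=1$ and $d_c(F|K,v)=1$.

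For b), and for a) in the case that $vK$ is cofinal in $vF$, this is immediate from Theorem~\ref{def'}: choose $K'$ as in Theorem~\ref{def} and apply $(\ref{m1})$, respectively $(\ref{m3})$, with $L=K'$. Since $(K,v)$ is c-defectless, respectively q-defectless, and $K'|K$ is finite, $d_c(K'|K,v)=1$, respectively $d_q(K'|K,v)=1$; hence $d_c(F|K,v)=1/d_c(K'.F|F,v)\leq 1$, respectively $d_q(F|K,v)=1/d_q(K'.F|F,v)\leq 1$, and combining with the lower bound $\geq 1$ noted above gives equality. (For b) one can alternatively conclude that $(F,v)$ is separably defectless, hence c-defectless, directly from Theorems~\ref{ims} and \ref{thmGST} using the cofinality hypothesis.)

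The remaining case — a) with $vK$ not cofinal in $vF$ — is where the genuine work lies, because $(\ref{m3})$ is no longer available. By Theorem~\ref{def'} it suffices to prove $d_q(F|K(T),v)=1$, i.e.\ $[F^{hc}:K(T)^{hc}]=[F^h:K(T)^h]$, for one standard valuation transcendence basis $T=\{x_i,y_j\}$. First I would observe that $T$ can be chosen so that some $vx_i$, say $vx_1$, lies above the convex hull $\Delta$ of $vK$ in $vF$: the group $vK(T)=vK\oplus\bigoplus_i\Z vx_i$ has finite index in $vF$ and is therefore cofinal in $vF$, so $vK$ is not cofinal in $vK(T)$; thus some $\Z$-linear combination of the $vx_i$ lies above $\Delta$, and after a unimodular $\Z$-linear change of the $x_i$ — which preserves the rational independence of their values over $vK$ — one of the new basis elements has value above $\Delta$. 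The plan is then to exploit this ``infinitely large direction'': with respect to the coarsening of $v$ by $\Delta$ the valuation becomes trivial on $K$ and nontrivial on $F$, and the $v$-completion of $K(T)^h$ is obtained from $K^{hc}$ by adjoining limits of series in the $x_i$ whose terms have $v$-value tending to infinity. Using this description, together with Lemma~\ref{Lc=L.Kc} and the inseparable-degree behaviour of the defect quotient in the spirit of Proposition~\ref{cens}, one shows that $F^h$ and $K(T)^{hc}$ are linearly disjoint over $K(T)^h$, i.e.\ that completion does not lower the (inseparable) degree of $F|K(T)$. Verifying this linear disjointness in the non-cofinal setting is the step I expect to be the main obstacle; granting it, $d_q(F|K,v)=1$ follows, and the q-defectlessness of $(F,v)$ then follows from the reduction above.
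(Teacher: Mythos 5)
Your treatment of part b), and of part a) in the case where $vK$ is cofinal in $vF$, is sound and essentially matches the paper: the paper derives $d_c(F|K,v)=1$ from the first equation of (\ref{m1}) via Corollaries \ref{d1} and \ref{d2}, and your use of (\ref{m3}) together with the bounds $d_q\geq 1$, $d_c\geq 1$ is the same mechanism. Likewise, your reduction of the q-defectlessness (resp.\ c-defectlessness) of $(F,v)$ to the vanishing of the corresponding defect of $F|K$ itself, via multiplicativity over finite extensions $E|F$, is correct and is exactly how the paper concludes b).

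There is, however, a genuine gap in part a) when $vK$ is not cofinal in $vF$ --- and you flag it yourself. You reduce to showing that $F^h$ is linearly disjoint from $K(T)^{hc}$ over $K(T)^h$, but the argument you sketch (describing the completion by series in the $x_i$ and invoking ``the spirit of Proposition \ref{cens}'') is not carried out, and it is precisely the substantive content of this case; nothing in your setup prevents an inseparable drop of degree upon completion. The paper closes this gap by a different idea (Proposition \ref{2.7}, second case): since $vK$ is not cofinal in $vF$, the convex hull of $vK$ in $vF$ yields a nontrivial coarsening $w$ of $v$ on $F$ that is \emph{trivial on $K$}. Then $(K,w)$ is trivially a defectless field, so by the Generalized Stability Theorem~\ref{thmGST} together with Lemma~\ref{trdc}, $(F,w)$ is a defectless field; hence every finite purely inseparable extension of $F$ is $w$-defectless and therefore (Lemma~\ref{lem-imm-def}, Corollary~\ref{idimmsep}) linearly disjoint from the immediate extension $F^{c(w)}|F$. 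Since any nontrivial coarsening of $v$ induces the same topology, $F^{c(w)}=F^c$, so $F^c|F$ is separable and Theorem~\ref{tcs} gives that $F$ (and, applied to $K(T)|K$, also $K(T)$) is q-defectless, whence $d_q(F|K(T),v)=1$. The missing idea in your proposal is this passage to the coarsening trivial on $K$, which converts the non-cofinality hypothesis into a defectlessness statement requiring no assumption on $(K,v)$; without it, your proof of a) in the non-cofinal case is incomplete.
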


We will use $d(F|K)$, $d_c(F|K)$ and $d_q(F|K)$ for defect, completion
defect and quotient defect of $(F|K,v)$, respectively, when there is no
ambiguity for the valuation $v$.

%
%
\section{Valuation theoretic preliminaries}
For the basic facts of valuation theory, we refer the reader to [E],
[EP], [R], [W] and [Z-S].

%
%
\subsection{Henselization and completion}                \label{secthc}
%
Every finite extension $L$ of $(K,v)$ satisfies the \textbf{fundamental
inequality} (cf.\ [E]):
\begin{equation}                                   \label{fiq}
[L:K]\geq \sum_{i=1}^g {\rm e}_i {\rm f}_i,
\end{equation}
where $v_1,\ldots,v_g$ are the distinct extensions of $v$ from $K$ to
$L$, ${\rm e}_i=(v_iL:vK)$ are the respective ramification indices and
${\rm f}_i=[Lv_i:Kv]$ are the respective inertia degrees. If $g=1$ for
every finite extension $L|K$ then $(K,v)$ is called \textbf{henselian}.
This means that $(K,v)$ is henselian if and only if $v$ extends uniquely
to each algebraic extension of $K$. Therefore, every algebraically
closed valued field is trivially henselian.

Every valued field $(K,v)$ admits a \textbf{henselization}, that is, a
separable-algebraic extension field which is henselian and has the
universal property that it admits a unique embedding in every henselian
extension field of $(K,v)$. In particular, if $(L,w)$ is a henselian
extension field of $(K,v)$, then $(K,v)$ has a unique henselization in
$(L,w)$, which we will denote by $K^{h(w)}$. Further, all henselizations
of $(K,v)$ are isomorphic over $K$, so we often talk of \emph{the}
henselization of $(K,v)$ and just write $K^h$. We can also fix the
henselizations by consistently working inside a large algebraically
closed field. A valued field is henselian if and only if it is equal to
any (and thus all) of its henselizations.

An extension $(L|K,v)$ is {\bf immediate} if the canonical embeddings
$vK\hookrightarrow vL$ and $Kv\hookrightarrow Lv$ are onto, which we
also express by the less precise assertion that $vK=vL$ and $Kv=Lv$.
The henselization is an immediate extension.

If $L$ is a finite extension of $K$ and $v_1,\ldots,v_g$ are the
distinct extensions of $v$ from $K$ to $L$, then $(K,v)$ has a
henselization $K^{h(v_i)}$ in each henselization $L^{h(v_i)}$ of
$(L,v_i)$, and
\begin{equation}                               \label{localdegr}
[L:K]=\sum_{1\leq i\leq g} [L^{h(v_i)} : K^{h(v_i)}].
\end{equation}

We have:

\begin{lemma}                                 \label{Lh=L.Kh}
An algebraic extension of a henselian field is again henselian. If
$(L|K,v)$ is algebraic, then $(L.K^h,v)$ is the henselization of
$(L,v)$.
\end{lemma}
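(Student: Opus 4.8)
The plan is to prove Lemma~\ref{Lh=L.Kh} in two parts, following the standard valuation-theoretic route. For the first assertion, suppose $(K,v)$ is henselian and $L|K$ is algebraic; I want to show $v$ extends uniquely to every finite (hence every algebraic) extension $M$ of $L$. But any finite extension $M$ of $L$ is algebraic over $K$, and writing $M = L(\alpha)$ we can enlarge to a finite extension $M'$ of $K$ containing $M$. Since $K$ is henselian, $v$ has a unique extension to $M'$, and therefore its restriction to $M$ is the unique extension of $v|_L$; this shows $(L,v)$ is henselian. (Equivalently, one can invoke the characterization that henselian = unique extension to the algebraic closure, which is inherited by any intermediate field.)

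For the second assertion, let $(L|K,v)$ be algebraic and consider $L.K^h$, the compositum taken inside the fixed algebraically closed valued field. First, $K^h$ is henselian by definition, and $L.K^h$ is algebraic over $K^h$, so by the first part $(L.K^h,v)$ is henselian; it is also a separable-algebraic extension of $L$ because $K^h|K$ is separable-algebraic and separability is preserved under composing with $L$. So $L.K^h$ is a henselian separable-algebraic extension of $(L,v)$, and it remains to verify the universal property: any embedding of $(L,v)$ into a henselian extension field $(\Omega,w)$ extends uniquely to $L.K^h$. Given such an embedding, its restriction to $K$ embeds $(K,v)$ into $(\Omega,w)$, which by the universal property of $K^h$ extends uniquely to $K^h$; combining with the given embedding of $L$ gives an embedding of the compositum $L.K^h$, and uniqueness follows since $L.K^h$ is algebraic over $L$ and $\Omega$ is henselian (so the valuation, and hence the embedding over $L$, is rigid). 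Thus $L.K^h$ satisfies the defining property of the henselization of $(L,v)$, so $(L.K^h,v) = (L,v)^h$.

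I do not expect a genuine obstacle here — the lemma is foundational and everything reduces to the definition of henselization plus the "unique extension" characterization. The one point requiring minor care is the uniqueness clause in the universal property: one must observe that two $L$-embeddings of the algebraic extension $L.K^h$ into a henselian field $\Omega$ differing only in how they treat $K^h$ would give two extensions of $w|_K$ to the image, contradicting henselianity of $\Omega$; alternatively, note that the composite embedding is forced on $K^h$ by the universal property of $K^h$ itself and on $L$ by hypothesis, and $L.K^h$ is generated by these. A second small subtlety is making sure the compositum is formed inside a common henselian overfield so that "$L.K^h$" is unambiguous; the excerpt has already set up a fixed extension of $v$ to $\tilde K$, so this is harmless.
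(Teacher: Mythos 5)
The paper states this lemma without proof, as a standard fact, so there is no argument of the authors' to compare yours against; I will judge the proposal on its own terms. Your first assertion is essentially fine, but only because of your parenthetical remark: the primary argument breaks down when $L|K$ is infinite, since then no finite extension $M'$ of $K$ can contain $M\supseteq L$. The correct route is exactly the one you mention in passing -- $K$ is henselian iff $v$ extends uniquely to $\tilde{K}$, and uniqueness of the extension from $K$ to $\tilde{K}=\tilde{L}$ immediately forces uniqueness of the extension from $L$ to $\tilde{L}$, hence to every algebraic extension of $L$.

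The genuine gap is in the existence half of your verification of the universal property. You have the inclusion $\lambda$ of $(L,v)$ into a henselian $(\Omega,w)$ and the canonical embedding $\iota$ of $(K^h,v)$ into $(\Omega,w)$ over $K$, and you assert that ``combining'' them gives an embedding of the compositum $L.K^h$. But two embeddings of two extensions of $K$ that merely agree on $K$ do not in general glue to an embedding of their compositum: take $L=M=K(\alpha)$ with $\sigma(\alpha)=\alpha$ and $\tau(\alpha)$ a different conjugate; there is no embedding of $L.M=K(\alpha)$ extending both. The compatibility of $\lambda$ and $\iota$ on $L\cap K^h$ -- and more generally the fact that the algebraic relations defining $L.K^h$ inside $\tilde{K}$ are respected in $\Omega$ -- is precisely the nontrivial content of the lemma, and it needs an argument (for instance, extend $w$ from $\Omega$ to $\tilde{\Omega}$ and choose an $L$-embedding $\tilde{K}\to\tilde{\Omega}$ compatible with the fixed extension of $v$, then invoke uniqueness of the valued $K$-embedding $K^h\to\tilde{\Omega}$ to see that this embedding restricts to $\iota$ on $K^h$; or use the description of henselizations as fixed fields of decomposition groups). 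A cleaner route avoids testing arbitrary $\Omega$ altogether: first, $K^h\subseteq L^h$, because the henselization of $K$ inside the henselian field $L^h\subseteq\tilde{K}$ is the image of the unique embedding $K^h\to L^h$, which composed with $L^h\hookrightarrow\tilde{K}$ must be the canonical embedding, so its image is $K^h$ itself; hence $L.K^h\subseteq L^h$. Conversely, $L.K^h$ is henselian by your first part and contains $L$, so the same uniqueness argument applied to $L^h\to L.K^h$ gives $L^h\subseteq L.K^h$. Together these yield $L^h=L.K^h$, and the universal property is then inherited from $L^h$ rather than verified by hand.
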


\parm
Let $(K,v)$ be any valued field. A valuation $w$ on $K$ is a
\textbf{coarsening} of $v$ if its valuation ring ${\mathcal{O}}_w$
contains the valuation ring ${\mathcal{O}}_v$ of $v$. If $H$ is a convex
subgroup of $vK$, then it gives rise to a coarsening $w$ with valuation
ring ${\mathcal{O}}_w:=\{x\in K \mid \exists\alpha\in H, \ \alpha \leq
vx\}$. Then $v$ induces a valuation $\ovl w$ on $Kw$ with valuation ring
${\mathcal{O}}_{\ovl{w}}:=\{xw \mid x\in{\mathcal{O}}_v \}$, and there
are canonical isomorphisms $wK\cong vK/H$ and $\ovl{w}(Kw)\cong H$. If
$(K,w)$ is any valued field and if $w'$ is any valuation on the residue
field $Kw$, then $w\circ w'$, called the \textbf{composition of $w$ and
$w'$}, will denote the valuation whose valuation ring is the subring of
the valuation ring of $w$ consisting of all elements whose $w$-residues
lie in the valuation ring of $w'$. In our above situation, $v$ is the
composition of $w$ and $\ovl{w}$.

The following fact is well known:
\begin{lemma}                         \label{comp-hen}
Take a valued field $(K,v)$ and a composition $v=w\circ\ovl{w}$. Then
$(K,v)$ is henselian if and only if $(K,w)$ and $(Kw,\ovl{w})$ are
henselian.
\end{lemma}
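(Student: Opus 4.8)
The plan is to use the criterion recalled earlier in the text --- that a valued field is henselian if and only if the valuation extends uniquely to each algebraic extension --- together with the standard description of the extensions of a composite valuation. Concretely, for an algebraic extension $L|K$, I would use the correspondence between the extensions $v'$ of $v=w\circ\ovl w$ to $L$ and the pairs $(w',\ovl w')$, where $w'$ is an extension of $w$ to $L$ and $\ovl w'$ is an extension of $\ovl w$ from $Kw$ to the residue field $Lw'$; the correspondence sends $(w',\ovl w')$ to $w'\circ\ovl w'$, and conversely sends $v'$ to its (unique) coarsening $w'$ with $w'|_K=w$ together with the residue valuation it induces on $Lw'$.

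First I would justify this correspondence, the only delicate point being that a given extension $v'$ of $v$ to $L$ has a \emph{unique} coarsening $w'$ with $w'|_K=w$. This holds because $L|K$ is algebraic, hence $v'L/vK$ is a torsion group; consequently every convex subgroup $H'$ of $v'L$ equals the convex hull of $H'\cap vK$ and is therefore determined by $H'\cap vK$, so the convex subgroup of $v'L$ cutting out $w'$ is forced. Granting the correspondence, $(K,v)$ is henselian if and only if, for every algebraic $L|K$, the pair $(w',\ovl w')$ is unique, i.e.\ if and only if $w$ extends uniquely to every algebraic $L|K$ \emph{and}, for that unique $w'$, $\ovl w$ extends uniquely to $Lw'$.

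Now for ``$(K,v)$ henselian $\Rightarrow$ $(K,w)$ and $(Kw,\ovl w)$ henselian'': the first half of the last condition says precisely that $(K,w)$ is henselian. For $(Kw,\ovl w)$, I would take an arbitrary finite extension $\ovl L|Kw$ and, using that $(K,w)$ is henselian, realize $\ovl L$ as a subfield of $Lw'$ for a suitable finite $L|K$ with its unique extension $w'$ of $w$: lift a finite generating set of $\ovl L|Kw$ via monic polynomials and, for each, pick a root whose residue is the prescribed generator (over a henselian field the residues of the roots of a monic polynomial are exactly the roots of the reduced polynomial, counted with multiplicity). Two distinct extensions of $\ovl w$ to $\ovl L$ would extend to two distinct extensions of $\ovl w$ to $Lw'$, contradicting uniqueness; hence $\ovl w$ extends uniquely to $\ovl L$, so $(Kw,\ovl w)$ is henselian. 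For the converse, if $(K,w)$ and $(Kw,\ovl w)$ are henselian then for every algebraic $L|K$ there is a unique extension $w'$ of $w$, and since $Lw'|Kw$ is algebraic there is a unique extension $\ovl w'$ of $\ovl w$ to $Lw'$; by the correspondence $v$ has a unique extension to $L$, so $(K,v)$ is henselian.

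The step I expect to be the main obstacle is this lifting argument: producing, from a finite extension of the residue field $Kw$, a finite extension of $K$ whose residue field contains it, with enough control over the chosen roots that their residues are the prescribed ones. This is the one place where henselianity of $(K,w)$ does real work, and it needs to be phrased carefully --- in particular the inseparable case is handled by adjoining lifts of several generators rather than insisting on a single primitive element. Once this is in place, the rest of the argument is a formal consequence of the composition correspondence and the uniqueness-of-extension criterion.
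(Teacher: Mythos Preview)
Your argument is correct and follows the standard route via the bijection between extensions of $v=w\circ\ovl w$ and pairs $(w',\ovl w')$; the lifting step you flag as the main obstacle is handled exactly as you describe, using that over the henselian field $(K,w)$ the residues of the roots of a monic integral polynomial exhaust the roots of the reduced polynomial. There is nothing to compare here: the paper states this lemma as ``well known'' and gives no proof of its own.
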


\parm
We conclude this section with a few results about the completion
$K^c$ of $K$. Like the henselization, also the completion is an
immediate extension.

\begin{lemma}                              \label{Lc=L.Kc}
If $(L|K,v)$ is finite, then the extension of $v$ from $K^c$ to $L.K^c$
is unique, and $(L.K^c,v)$ is the completion of $(L,v)$.
\end{lemma}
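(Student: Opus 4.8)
The plan is to reduce the statement to standard facts about completions of valued fields via two observations: that $L.K^c$ is finite-dimensional over $K^c$, hence complete, and that $K$ (and therefore $L$) is dense in it. First I would note that since $L|K$ is finite, $L.K^c = K^c(a_1,\dots,a_n)$ for finitely many elements of $L$; as a finite-dimensional vector space over the complete field $K^c$, the field $L.K^c$ is complete with respect to any extension of $v$ (a finite-dimensional normed space over a complete field is complete, and a finite extension of $(K^c,v)$ carries a unique extension of the valuation up to equivalence when we have uniqueness — which is exactly the first claim we must establish). So the two assertions interact, and I would prove uniqueness first.

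For uniqueness of the extension of $v$ from $K^c$ to $L.K^c$: the key point is that $K^c$ is henselian. Indeed, a complete field of rank $1$ is henselian, and more generally one reduces the general-rank case to the rank-$1$ case by Lemma~\ref{comp-hen}, writing $v$ as a composition of coarsenings; alternatively one invokes the standard fact (citable from [E] or [W]) that the completion of any valued field is henselian. Since $K^c$ is henselian and $L.K^c$ is algebraic over it, Lemma~\ref{Lh=L.Kh} gives that $L.K^c$ is henselian, so $v$ extends uniquely from $K^c$ to $L.K^c$ — and in particular the extension of $v$ we fixed on $\tilde K$ restricts to the unique such valuation.

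It remains to show $(L.K^c,v)$ is \emph{the} completion of $(L,v)$, i.e.\ that it is complete and that $L$ is dense in it. Completeness follows from the first paragraph: $L.K^c$ is a finite-dimensional $K^c$-vector space and $K^c$ is complete, so with respect to the (now unique) valuation topology $L.K^c$ is complete. For density: pick a $K^c$-basis of $L.K^c$ consisting of elements $b_1,\dots,b_m$ of $L$ (possible since $L$ spans $L.K^c$ over $K^c$); since $K$ is dense in $K^c$, every element $\sum \lambda_i b_i$ with $\lambda_i\in K^c$ is a limit of elements $\sum c_i b_i$ with $c_i\in K$, and these lie in $L$. Hence $L$ is dense in $L.K^c$, and a complete field in which $L$ is dense is, by the universal property / uniqueness of completions, canonically the completion $L^c$. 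I expect the main obstacle to be the careful handling of arbitrary (non-rank-one) value groups in the uniqueness step — making sure that "completion is henselian" and the finite-dimensional-vector-space argument are correctly invoked when the valuation topology need not come from a metric; but both are available in the cited references, so this is more a matter of precise citation than of genuine difficulty.
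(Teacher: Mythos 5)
Your uniqueness step rests on a false premise. You claim that $K^c$ is henselian, citing either ``the completion of any valued field is henselian'' or a reduction to rank $1$ via Lemma~\ref{comp-hen}. Neither works: for valuations of rank greater than $1$ the completion is in general \emph{not} henselian --- the paper says so explicitly in the introduction (``Since for valuations of arbitrary rank, the completion is in general not henselian, we measure the defect over the completion of the henselization''), and this is precisely why the objects $K^{hc}$ and $K^{ch}$ are distinguished throughout. The reduction via Lemma~\ref{comp-hen} fails because writing $v=w\circ\ovl{w}$, completeness of $(K,v)$ gives you (at best) henselianity of $(K,w)$ for a rank-$1$ coarsening $w$, but gives no control whatsoever over $(Kw,\ovl{w})$; e.g.\ $\Q((t))$ with $v_t$ composed with the $p$-adic valuation on the residue field $\Q$ is complete but not henselian. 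What the paper cites as Lemma~\ref{hchc=hc} is the statement that the completion of a \emph{henselian} field is henselian, which you may have misremembered as a statement about arbitrary valued fields. So the first assertion of the lemma cannot be obtained this way.

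The paper's route avoids henselianity entirely: it first notes that $(L.K^c,v)$ is complete for \emph{any} extension of $v$ from $K^c$ (a finite extension of a complete field is complete), then uses that $vK$ is cofinal in $vL$ (since $L|K$ is finite) to conclude $K^c\subseteq L^c$ and hence $L\subseteq L.K^c\subseteq L^c$; a complete subfield of $L^c$ containing the dense subfield $L$ must equal $L^c$, and the uniqueness of the extension of $v$ from $K^c$ then falls out of the uniqueness of the valuation on the completion of $(L,v)$, rather than being proved first. Your density argument (approximating $K^c$-coefficients with respect to a basis taken from $L$) is a workable substitute for the paper's cofinality argument, but the logical order must be reversed: the identification $L.K^c=L^c$ has to come first, with uniqueness as its corollary, not the other way around.
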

\begin{proof}
A finite extension of a complete valued field is again complete, hence
$(L.K^c,v)$ is complete, for any extension of $v$ from $K^c$ to $L.K^c$.
On the other hand, since $(L|K,v)$ is finite, the value group $vK$ is
cofinal in $vL$, which implies that the completion of $(L,v)$
must contain the completion of $(K,v)$. As it also contains $L$, it
contains $L.K^c$. Thus, $(L.K^c,v)$ is the completion of $(L,v)$, which
also implies that the extension of $v$ from $K^c$ to $L.K^c$ is unique.
\end{proof}

A proof of the following theorem can be found in [W] (Theorem 32.19):

\begin{lemma}                        \label{hchc=hc}
The completion of a henselian field is henselian too. Consequently,
\[
(K^{hc})^{hc} = K^{hc}.
\]
Moreover, a henselian field is separable-algebraically closed in its
completion.
\end{lemma}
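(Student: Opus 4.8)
This is standard (a reference is~[W], Theorem~32.19); I sketch the proof I would give. Write $\hat K:=K^c$ and recall that $K$ is dense in $\hat K$ and that $(\hat K|K,v)$ is immediate, so $v\hat K=vK$ and $\hat Kv=Kv$. To see that $\hat K$ is henselian I verify Hensel's Lemma in factorization form: given a monic $f\in\mathcal{O}_{\hat K}[X]$ and a factorization $\overline f=\phi\psi$ of its reduction into coprime monic polynomials $\phi,\psi\in Kv[X]$, I must find monic $g,h\in\mathcal{O}_{\hat K}[X]$ with $gh=f$, $\overline g=\phi$ and $\overline h=\psi$. The plan is to pick monic $f_n\in\mathcal{O}_K[X]$ of the same degree as $f$ with $v(f-f_n)\ge\gamma_n$ for a sequence $(\gamma_n)$ cofinal in $vK$ (possible by density); then $\overline{f_n}=\overline f$, so by the henselianity of $K$ each $f_n$ has a unique factorization $f_n=g_nh_n$ over $\mathcal{O}_K$ lifting $\phi\psi$. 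The crucial point is that this factorization depends Lipschitz-continuously on $f$: since $\phi$ and $\psi$ are coprime, the pertinent Sylvester matrix is invertible over $Kv$, hence has unit determinant over $\mathcal{O}_K$, and a short computation (in which the quadratic cross-term is dominated because $g_n-g_m$ and $h_n-h_m$ have positive value) then gives $v(g_n-g_m)\ge v(f_n-f_m)$ and $v(h_n-h_m)\ge v(f_n-f_m)$. Since $v(f_n-f_m)\ge\min(\gamma_n,\gamma_m)$ is cofinal in $vK$, the sequences $(g_n)$ and $(h_n)$ are Cauchy and converge in $\hat K[X]$ to monic polynomials $g,h$ lifting $\phi,\psi$ with $gh=\lim f_n=f$; hence $\hat K$ is henselian.

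The step I expect to be the real obstacle is conceptual: one must avoid proving this by Newton iteration carried out \emph{inside} $\hat K$. For a valuation of rank $>1$ the quality of successive Newton approximations merely doubles at each stage and need not become cofinal in $vK$ --- indeed, a complete valued field of higher rank need not be henselian. The argument above sidesteps this by solving the factorization problem \emph{exactly} at each finite stage inside the henselian ground field $K$, and only afterwards passing to the limit, using that a subset cofinal in $vK$ stays cofinal after subtracting a fixed element.

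The equality $(K^{hc})^{hc}=K^{hc}$ is then formal: $K^{hc}$ is the completion of the henselian field $K^h$, hence henselian by the above, and being a completion it is also complete; since a henselian field equals its henselization and a complete field equals its completion, $(K^{hc})^{hc}=((K^{hc})^h)^c=(K^{hc})^c=K^{hc}$. Finally, for the separable-algebraic closedness, let $a\in K^c$ be separable-algebraic over $K$, with conjugates $a=a_1,\dots,a_r$ over $K$ inside $\tilde K$; these are pairwise distinct by separability, so $\mu:=\max_{2\le j\le r}v(a-a_j)$ is a well-defined element of $v\tilde K$, and since $vK$ is cofinal in $v\tilde K$ we may, by density, choose $b\in K$ with $v(a-b)>\mu$. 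As $K$ is henselian, $v$ extends uniquely to $\tilde K$ and is therefore invariant under $\Gal(\tilde K|K)$; choosing $\sigma_j\in\Gal(\tilde K|K)$ with $\sigma_j a=a_j$ and using $\sigma_j b=b$ we obtain $v(b-a_j)=v(b-a)>\mu\ge v(a-a_j)$ for every $j\ge 2$, whence $v(a-a_j)\ge\min\{v(a-b),v(b-a_j)\}>v(a-a_j)$, which is absurd unless $r=1$. But $r=1$ together with separability forces $a\in K$; hence $K$ is separable-algebraically closed in $K^c$.
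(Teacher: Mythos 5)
Your proof is correct. Note, however, that the paper does not prove this lemma at all: it simply cites [W], Theorem~32.19, so there is no internal argument to compare against --- what you have done is supply the proof that the paper delegates to the reference. Your argument is the standard one and, importantly, it is organized in the only way that works in arbitrary rank: you verify the factorization form of Hensel's Lemma over $K^c$ by solving the lifting problem \emph{exactly} over the henselian ground field $K$ at each approximation stage and then passing to the limit, using the coprimality of $\phi$ and $\psi$ (unit resultant, hence an invertible Sylvester matrix over ${\mathcal{O}}_K$) to get the estimate $v(g_n-g_m)\geq v(f_n-f_m)$ after absorbing the quadratic cross-term. Your remark that a Newton iteration carried out inside $K^c$ would fail for rank $>1$ is exactly the right warning; a complete field of higher rank need not be henselian, so the henselianity of $K$ must enter at every finite stage, as it does in your argument. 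The Krasner-style proof of the last assertion is also correct: the unique extension of $v$ to $\tilde{K}$ is $\Gal(\tilde{K}|K)$-invariant, $vK$ is cofinal in $v\tilde{K}$ because the quotient is torsion, and density of $K$ in $K^c$ then produces the element $b$ that forces $r=1$.

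Two cosmetic points. First, if $vK$ has uncountable cofinality there is no sequence $(\gamma_n)$ cofinal in $vK$, so your approximants should be indexed by the directed set $vK$ itself (a Cauchy net $(f_\gamma)_{\gamma\in vK}$ with $v(f-f_\gamma)\geq\gamma$); the Lipschitz estimate and the limit argument go through verbatim. Second, you should say explicitly that the factorization form of Hensel's Lemma for monic polynomials is equivalent to the paper's definition of henselian (unique extension of $v$ to every algebraic extension); this equivalence is standard but is the bridge between what you verify and what the lemma asserts.
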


%
%
\subsection{Defect and defectless fields}        \label{secDefect}
Assume that $(L|K,v)$ is a finite extension such that $v$ extends
uniquely from $K$ to $L$, then the Lemma of Ostrowski (cf.\ [EN], [R])
says that
\[
[L:K]=p^\nu(vL:vK)[Lv:Kv], \ \mbox{for some integer} \ \nu\geq 0
\]
where $p=\chara Lv$ if it is positive and $p=1$ otherwise. The factor
$d(L|K,v):=p^\nu$ is called the \textbf{defect} of the extension
$(L|K,v)$. If $d(L|K,v)=1$, then $L|K$ is called a \textbf{defectless
extension}. More generally (i.e., for $g\geq 1$), a finite extension
$(L|K,v)$ is called defectless if equality holds in (\ref{fiq}). A
valued field $(K,v)$ is said to be a \textbf{defectless},
\textbf{separably defectless} or \textbf{inseparably defectless field}
if every finite, finite separable or finite purely inseparable,
respectively, extension of $K$ satisfies equality in the fundamental
inequality (\ref{fiq}). One can trace this back to the case of unique
extensions of the valuation; for the proof of the following theorem, see
[K2] (a partial proof was already given in [E]):

\begin{lemma}                                        \label{dl-hdl}
A valued field is defectless if and only if its henselization is defectless.
The same holds for ``separably defectless" and ``inseparably defectless"
in place of ``defectless".
\end{lemma}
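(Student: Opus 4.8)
The plan is to deduce everything from the degree formula~\eqref{localdegr}, Lemma~\ref{Lh=L.Kh}, and the fact that henselizations are immediate; the arguments for ``separably defectless'' and ``inseparably defectless'' will differ only by a few remarks.

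First I would show that if $(K^h,v)$ is defectless, then so is $(K,v)$. Take a finite extension $L|K$ with distinct extensions $v_1,\dots,v_g$ of $v$. For each $i$ the field $L^{h(v_i)}=L.K^{h(v_i)}$ is a finite extension of the henselian field $K^{h(v_i)}$, which is $K$-isomorphic to $K^h$ and hence defectless; so $L^{h(v_i)}|K^{h(v_i)}$ is defectless, and since henselizations are immediate, $[L^{h(v_i)}:K^{h(v_i)}]=(v_iL:vK)[Lv_i:Kv]=e_if_i$. Summing over $i$ and using~\eqref{localdegr} gives $[L:K]=\sum_i e_if_i$, i.e.\ equality in~\eqref{fiq}. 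If $L|K$ is separable, then each $L^{h(v_i)}=L.K^{h(v_i)}$ is separable over $K^{h(v_i)}$, so the same argument works with ``separably defectless''. If $L|K$ is purely inseparable, then $g=1$ and the argument has a single summand, giving the ``inseparably defectless'' case.

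For the converse, suppose $(K,v)$ is defectless and take a finite extension $M|K^h$. Pick generators $\alpha_1,\dots,\alpha_n$ of $M|K^h$; these are algebraic over $K$, so $L:=K(\alpha_1,\dots,\alpha_n)$ is a finite extension of $K$ with $M=L.K^h$, which by Lemma~\ref{Lh=L.Kh} is the henselization $L^{h(v_1)}$ of $(L,v)$, where $v_1$ is the restriction to $L$ of the fixed valuation. For every extension $v_i$ of $v$ to $L$ the fundamental inequality gives $[L^{h(v_i)}:K^{h(v_i)}]\ge e_if_i$ (using immediateness of henselizations once more), while $\sum_i e_if_i=[L:K]=\sum_i[L^{h(v_i)}:K^{h(v_i)}]$ since $(K,v)$ is defectless; hence equality holds for every $i$, and in particular $M=L^{h(v_1)}$ is defectless over $K^h=K^{h(v_1)}$. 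As $M$ was arbitrary, $(K^h,v)$ is defectless. For the ``separably defectless'' version one observes that if $M|K^h$ is separable then each $\alpha_j$ is separable over $K^h$, hence over $K$ (because $K^h|K$ is separable-algebraic), so $L|K$ may be taken separable.

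The only extra ingredient needed for the ``inseparably defectless'' version of the converse is to arrange $L|K$ purely inseparable. Since $K^h|K$ is separable one has $(K^h)^{1/p^{\infty}}=K^h\cdot K^{1/p^{\infty}}$ (reducing to a simple separable extension $K(\alpha)$, one checks $\alpha^{1/p}\in K(\alpha)\cdot K^{1/p}$ by comparing $K^{1/p}$-degrees, and iterates), and any finite purely inseparable $M|K^h$ lies in $(K^h)^{1/p^{\infty}}$; hence the generators $\alpha_j$ above can be chosen inside $K^{1/p^{\infty}}$, making $L=K(\alpha_1,\dots,\alpha_n)$ purely inseparable over $K$. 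Then $g=1$ for $L|K$, so $[M:K^h]=[L^h:K^h]=[L:K]$ by~\eqref{localdegr}, and defectlessness of $L|K$ together with immediateness of henselizations yields defectlessness of $M|K^h$. I expect this identification of the purely inseparable extensions of $K^h$ to be the only real (if minor) obstacle; the rest is bookkeeping with~\eqref{localdegr} and the immediateness of henselizations.
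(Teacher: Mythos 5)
The paper does not actually present its own proof of this lemma; it refers to [K2] and [E], so your argument can only be judged on its merits. Your treatment of ``defectless'' and ``separably defectless'' is correct in both directions: the combination of (\ref{localdegr}), Lemma~\ref{Lh=L.Kh} and the immediateness of henselizations is exactly the right machinery, and the observation that a separable $M|K^h$ descends to a separable $L|K$ because $K^h|K$ is separable-algebraic is sound. The easy direction of the ``inseparably defectless'' case (where $g=1$ because the valuation extends uniquely to purely inseparable extensions) is also fine.

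The gap is in the converse for ``inseparably defectless''. From $M\subseteq (K^h)^{1/p^{\infty}}=K^h.K^{1/p^{\infty}}$ you conclude that the generators of $M|K^h$ ``can be chosen inside $K^{1/p^{\infty}}$'', i.e.\ that $M=L.K^h$ for some purely inseparable $L|K$. That does not follow, and it is false in general: writing the generators of $M$ as rational expressions in elements of $K^h$ and of $K^{1/p^{\infty}}$ only yields an inclusion $M\subseteq L.K^h$ for a suitable finite purely inseparable $L|K$. There is no Galois correspondence for purely inseparable extensions, and intermediate fields of $K^h.K^{1/p}\,|\,K^h$ need not descend to $K$: if $K$ is not henselian, $[K:K^p]\geq p^2$, $x,y\in K$ are $p$-independent and $\theta\in K^h\setminus K$, then $M=K^h(x^{1/p}+\theta y^{1/p})$ is a purely inseparable extension of $K^h$ of degree $p$ which in general satisfies $M\cap K^{1/p^{\infty}}=K$ and hence is not of the form $L.K^h$ with $L|K$ purely inseparable. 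Fortunately the repair is short and uses only tools already in the paper: choose a finite purely inseparable $L|K$ with $M\subseteq L.K^h=L^h$; since $v$ extends uniquely to $L$, (\ref{localdegr}) gives $[L^h:K^h]=[L:K]$, defectlessness of $L|K$ gives $[L:K]=(vL:vK)[Lv:Kv]$, so $d(L^h|K^h)=1$ by immediateness of henselizations, and the multiplicativity of the defect (Lemma~\ref{dmult}) applied to the tower $K^h\subseteq M\subseteq L^h$ forces $d(M|K^h)=1$. With this substitution your proof is complete.
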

\n
Therefore, the Lemma of Ostrowski shows that:

\begin{corollary}                                   \label{corKv=0}
Every valued field $(K,v)$ with $\chara Kv=0$ is a defectless field.
\end{corollary}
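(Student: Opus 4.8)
The plan is to reduce to the henselian case and then invoke the Lemma of Ostrowski. First I would use Lemma~\ref{dl-hdl}: since $(K,v)$ is a defectless field if and only if its henselization $(K^h,v)$ is, it suffices to prove that $(K^h,v)$ is a defectless field. The point of passing to $K^h$ is that it is henselian, so $v$ extends uniquely to every algebraic extension of $K^h$; this is exactly the hypothesis under which the Lemma of Ostrowski is stated in Section~\ref{secDefect}.

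Next I would observe that the residue characteristic is inherited by algebraic extensions. Since the henselization is an immediate extension, $K^h v = Kv$ has characteristic $0$, and for any finite extension $L$ of $K^h$ the residue field $Lv$ contains $K^h v$ and hence also has characteristic $0$. Therefore, applying the Lemma of Ostrowski to the (uniquely extended) finite extension $(L|K^h,v)$, the prime $p=\chara Lv$ is $1$, so $p^\nu=1$ and $[L:K^h]=(vL:vK^h)[Lv:K^hv]$. This is precisely equality in the fundamental inequality~(\ref{fiq}) (with $g=1$ here), so $(L|K^h,v)$ is defectless. As $L$ ranges over all finite extensions of $K^h$, this shows that $(K^h,v)$ is a defectless field, and then Lemma~\ref{dl-hdl} yields that $(K,v)$ is a defectless field.

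There is no real obstacle in this argument; the corollary is an immediate consequence of the Lemma of Ostrowski together with Lemma~\ref{dl-hdl}. The only step meriting a moment's care is the reduction to $K^h$, which makes the ``unique extension'' hypothesis of Ostrowski's lemma available; alternatively, for an arbitrary finite extension $(L|K,v)$ one could split $[L:K]=\sum_i [L^{h(v_i)}:K^{h(v_i)}]$ via (\ref{localdegr}) and apply Ostrowski to each local piece (each of which has residue characteristic $0$), but routing through Lemma~\ref{dl-hdl} is cleaner.
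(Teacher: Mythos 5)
Your proof is correct and follows exactly the route the paper intends: the corollary is stated immediately after Lemma~\ref{dl-hdl} with the remark ``Therefore, the Lemma of Ostrowski shows that\dots'', i.e.\ reduce to the henselization via Lemma~\ref{dl-hdl} and apply Ostrowski's Lemma with $p=1$ since residue characteristic $0$ persists in all algebraic extensions. Nothing further is needed.
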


The following lemma shows that the defect is multiplicative. This is a
consequence of the multiplicativity of the degree of field extensions
and of ramification index and inertia degree.

\begin{lemma}                               \label{dmult}
Let $K\subset L\subset M$ be fields and $v$ extends uniquely from $K$ to
$M$. Then
\[
d(M|K,v)=d(M|L,v)\cdot d(L|K,v)\,.
\]
In particular, $(M|K,v)$ is defectless if and only if $(M|L,v)$ and
$(L|K,v)$ are defectless.
\end{lemma}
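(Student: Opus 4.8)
The plan is to reduce everything to the defining quotient formula for the defect together with three elementary tower laws. Since $v$ extends uniquely from $K$ to $M$, it also extends uniquely from $K$ to $L$ and from $L$ to $M$, so all three defects $d(M|K,v)$, $d(M|L,v)$, $d(L|K,v)$ are defined; by the Lemma of Ostrowski each equals the natural number obtained by dividing the relevant field degree by the product of the corresponding ramification index and inertia degree. First I would record the three tower laws $[M:K]=[M:L]\,[L:K]$, $(vM:vK)=(vM:vL)(vL:vK)$ and $[Mv:Kv]=[Mv:Lv]\,[Lv:Kv]$: the first is multiplicativity of the degree of field extensions, and the other two are multiplicativity of the index of a subgroup and of the degree of a field extension. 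These apply in their naive form because every quantity in sight is finite — indeed $(vL:vK)$, $[Lv:Kv]$, $(vM:vL)$, $[Mv:Lv]$ are all bounded by the finite degrees $[L:K]$ and $[M:L]$ via the fundamental inequality (\ref{fiq}) with $g=1$.

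Multiplying the defining quotients for $d(M|L,v)$ and $d(L|K,v)$ and substituting the three tower laws then gives $d(M|L,v)\cdot d(L|K,v)=[M:K]/((vM:vK)[Mv:Kv])=d(M|K,v)$, which is the asserted identity. For the ``in particular'' clause, I would note that by the Lemma of Ostrowski each of $d(M|L,v)$ and $d(L|K,v)$ is a power of $p$ (possibly $1$), hence a positive integer, so their product equals $1$ if and only if both factors equal $1$; combined with the identity just proved, $(M|K,v)$ is defectless exactly when both $(M|L,v)$ and $(L|K,v)$ are.

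There is essentially no serious obstacle here. The only point needing any care is the bookkeeping ensuring the tower laws hold in their naive multiplicative form, i.e.\ that the intermediate ramification indices and inertia degrees are finite, which is immediate from the fundamental inequality; the rest is just the observation that the defect, as defined, is a ratio of multiplicative quantities.
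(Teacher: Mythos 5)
Your proposal is correct and is precisely the argument the paper intends: the authors state only that the lemma ``is a consequence of the multiplicativity of the degree of field extensions and of ramification index and inertia degree,'' and your write-up simply fills in those tower laws and the finiteness bookkeeping. No gap and no divergence from the paper's route.
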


Using this lemma together with Lemma~\ref{dl-hdl}, one easily shows:

\begin{lemma}
Every finite extension of a defectless field is again a defectless
field.
\end{lemma}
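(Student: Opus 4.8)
The plan is to reduce the statement to henselian ground fields, where the defect is a single number governed by the multiplicativity Lemma~\ref{dmult}, and then feed in Lemma~\ref{dl-hdl}.

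First I would let $(K,v)$ be a defectless field and $(L|K,v)$ a finite extension. By Lemma~\ref{dl-hdl} it suffices to prove that the henselization $L^h$ is a defectless field; similarly, $K^h$ is a defectless field. By Lemma~\ref{Lh=L.Kh} we have $L^h = L.K^h$, so $L^h$ is a \emph{finite} extension of $K^h$, of degree at most $[L:K]$, and $L^h$ is henselian, being an algebraic extension of the henselian field $K^h$.

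Next, given an arbitrary finite extension $E$ of $L^h$, I would observe that $E$ is then a finite extension of $K^h$. Since $K^h$ is henselian, $v$ extends uniquely from $K^h$ to $E$ (and to $L^h$), and since $K^h$ is a defectless field, equality in (\ref{fiq}) for $E|K^h$ just means $d(E|K^h,v)=1$; likewise $d(L^h|K^h,v)=1$. Applying Lemma~\ref{dmult} to $K^h\subseteq L^h\subseteq E$ gives $d(E|L^h,v)=1$, i.e.\ $E|L^h$ is defectless. As $E$ was arbitrary, $L^h$ is a defectless field, and then Lemma~\ref{dl-hdl} applied to $(L,v)$ shows that $(L,v)$ is a defectless field.

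I do not expect a genuine obstacle here; the only points needing care are that passing to henselizations turns the finite extension $L|K$ into a finite extension $L^h|K^h$ of henselian fields (which is exactly where $L^h=L.K^h$ is used, and which is what makes Lemma~\ref{dmult}, requiring a unique extension of $v$, applicable), and that over a henselian field the general notion ``defectless'' for a finite extension coincides with ``defect equal to $1$''.
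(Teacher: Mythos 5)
Your proof is correct and follows exactly the route the paper intends: it sketches no details but says the lemma follows from Lemma~\ref{dmult} together with Lemma~\ref{dl-hdl}, which is precisely your reduction to the henselization, the identification $L^h=L.K^h$, and the application of multiplicativity to the tower $K^h\subseteq L^h\subseteq E$.
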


The following theorem is proved in [K5], where it is stated with the
additional hypothesis ``$\chara K=p>0$''. For our purpose in this paper,
we state it in general and include the proof.

\begin{lemma}                              \label{csdd}
Let $(K,v)$ be a henselian valued field. Then $K$ is separably defectless
if and only if $K^c$ is defectless.
\end{lemma}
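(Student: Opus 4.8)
The plan is to prove the two implications separately, reducing everything to finite extensions via the definitions of separable and (ordinary) defectlessness, and connecting $K$-extensions to $K^c$-extensions through the compositum operation together with Lemma~\ref{Lc=L.Kc}.

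\medskip\noindent
\emph{From $K^c$ defectless to $K$ separably defectless.} Suppose $K^c$ is defectless. Let $(L|K,v)$ be a finite separable extension; I want to show it is defectless. Since $K$ is henselian, $v$ extends uniquely, so this means showing $d(L|K,v)=1$. Consider the compositum $L.K^c$. By Lemma~\ref{Lc=L.Kc}, $(L.K^c,v)$ is the completion of $(L,v)$, hence an immediate extension of $(L,v)$; in particular $v(L.K^c)=vL$ and $(L.K^c)v=Lv$. Also, since $L|K$ is separable, $L.K^c|K^c$ is a finite separable extension, with $[L.K^c:K^c]\leq[L:K]$; and since $K^c$ is defectless, $d(L.K^c|K^c,v)=1$, i.e.\ $[L.K^c:K^c]=(v(L.K^c):vK^c)\,[(L.K^c)v:K^cv]$. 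The key point is that the completion is immediate both over $K$ and over $L$: $vK^c=vK$, $K^cv=Kv$ and (by the above) $v(L.K^c)=vL$, $(L.K^c)v=Lv$. Hence $[L.K^c:K^c]=(vL:vK)[Lv:Kv]$. On the other hand $[L:K]\geq(vL:vK)[Lv:Kv]$ by the fundamental inequality, and $[L:K]\geq[L.K^c:K^c]$. Combining, $[L:K]\geq(vL:vK)[Lv:Kv]=[L.K^c:K^c]$. To finish this direction I need the reverse inequality $[L:K]\leq[L.K^c:K^c]$, which holds because a separating transcendence-free generator of $L|K$ — say $L=K(\alpha)$ with minimal polynomial of degree $[L:K]$ — keeps its degree over $K^c$: here I use Lemma~\ref{hchc=hc}, that $K$ (being henselian) is separable-algebraically closed in $K^c$, so the minimal polynomial of $\alpha$ over $K$ stays irreducible over $K^c$, giving $[L.K^c:K^c]=[L:K]$. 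Therefore $d(L|K,v)=1$.

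\medskip\noindent
\emph{From $K$ separably defectless to $K^c$ defectless.} Suppose $K$ is separably defectless; I want every finite extension of $K^c$ to be defectless. Since $K^c$ is henselian (Lemma~\ref{hchc=hc}), by Lemma~\ref{dmult} (multiplicativity of the defect) and the structure of finite extensions it suffices to treat separately finite separable extensions and finite purely inseparable extensions of $K^c$. For the purely inseparable case: if $\chara K=0$ there is nothing to prove, and if $\chara K=p>0$, a finite purely inseparable extension of the complete field $K^c$ is itself complete, hence immediate over $K^c$ is false in general — rather, I should argue that a purely inseparable extension $M|K^c$ of degree $p^n$ satisfies $(vM:vK^c)[Mv:K^cv]=p^n$ because $K^c$ is complete: indeed $vM/vK^c$ and $Mv/K^cv$ are $p$-groups, and a standard argument (or Lemma~\ref{hchc=hc} applied to $M$, which is again henselian and complete) forces no defect — this is where I must be a little careful. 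For the separable case: let $M|K^c$ be finite separable. I approximate: by Krasner's lemma (available since $K^c$ is henselian and $K$ is dense in $K^c$), there is a finite separable extension $L|K$ with $L.K^c=M$ and $[L:K]=[M:K^c]$. Since $K$ is separably defectless and henselian, $d(L|K,v)=1$, i.e.\ $[L:K]=(vL:vK)[Lv:Kv]$. As before, $L.K^c$ is the completion of $L$ (Lemma~\ref{Lc=L.Kc}), so it is immediate over $L$, giving $v M=vL$, $Mv=Lv$; and $vK^c=vK$, $K^cv=Kv$. Hence $[M:K^c]=[L:K]=(vL:vK)[Lv:Kv]=(vM:vK^c)[Mv:K^cv]$, so $d(M|K^c,v)=1$.

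\medskip\noindent
The main obstacle I anticipate is the purely inseparable part of the second implication: separable defectlessness of $K$ says nothing directly about inseparable extensions, so I cannot pull an inseparable extension of $K^c$ down to one of $K$. The resolution is that I do not need to — a finite purely inseparable extension of a \emph{complete} field is automatically defectless, and this can be proved directly (e.g.\ writing the extension as a tower of degree-$p$ steps $K^c(a^{1/p})$ and checking that completeness forces $(vK^c(a^{1/p}):vK^c)[\cdots:\cdots]=p$, using that $vK^c$ is the value group of a complete field and hence has no "missing" $p$-th roots that would create defect). Combined with multiplicativity (Lemma~\ref{dmult}) and the fact that every finite extension of $K^c$ factors through its maximal separable subextension, this closes the argument. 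A secondary technical point is the faithful application of Krasner's lemma to produce $L|K$ with $L.K^c=M$ and matching degree; this is standard for henselian (or complete) base fields and uses density of $K$ in $K^c$.
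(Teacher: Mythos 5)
Your two separable transfers are correct and coincide with the paper's own computations: for a finite separable $L|K$ you use that $L.K^c=L^c$ is immediate over $L$ (Lemma~\ref{Lc=L.Kc}) and linearly disjoint from $K^c$ over $K$ because the henselian field $K$ is separable-algebraically closed in its completion (Lemma~\ref{hchc=hc}); and conversely you descend a finite separable extension of $K^c$ to $K$ by approximating the coefficients of a defining polynomial and invoking Krasner (the paper does the same with Theorem 32.20 of [W], running the descent contrapositively with a Galois extension, which is cosmetic). The genuine gap is the purely inseparable case of the direction ``$K$ separably defectless $\Rightarrow$ $K^c$ defectless''. The fact you lean on --- that a finite purely inseparable extension of a complete field is automatically defectless --- is false. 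Completeness only excludes immediate extensions generated by elements $a$ whose approximation set $v(a-K^c)$ is cofinal in the value group; an immediate purely inseparable extension of degree $p$ merely requires the set $v(a-K^c)=\frac{1}{p}\,v(a^p-(K^c)^p)$ to have no maximal element, and when the value group is dense this set can be bounded above, so that $a$ is not a Cauchy limit and the defect survives completion. (Your remark that $vM/vK^c$ and $Mv/K^cv$ are $p$-groups gives nothing: in a defect extension both are trivial.) The claim is true for complete \emph{discretely} valued fields, which are maximal, but standard examples in [K5] and [K7] exhibit complete rank-one fields of characteristic $p$ with dense value group admitting immediate purely inseparable extensions of degree $p$. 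There is no ``standard argument'' of the kind you sketch.

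This is precisely the hard core of the lemma, and the paper does not prove it from scratch: it quotes Theorem 5.1 of [K5], which yields that the complete henselian field $K^c$ is defectless if and only if it is separably defectless --- that is, the separable defectlessness is what controls the inseparable extensions of $K^c$; it does not come for free from completeness. Once that reduction is in place, only ``separably defectless'' has to be transferred between $K$ and $K^c$, which your separable steps accomplish and which is all the paper does. Without that citation or an equivalent substitute, your proof of the forward implication is incomplete, for exactly the reason you yourself flag: separable defectlessness of $K$ gives you no direct handle on purely inseparable extensions of $K^c$, and completeness alone does not supply one.
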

\begin{proof}
In view of Corollary~\ref{corKv=0}, we may assume that $\chara Kv=p>0$.
Since $K$ is a henselian field, the same holds for $K^c$
(Lemma~\ref{hchc=hc}). The field $K^c$ is defectless if and only if it
is separably defectless, indeed, this is trivially true when $\chara
K=0$, and in the case of positive characteristic it is implied by
Theorem 5.1 of [K5]. Thus it suffices to prove that $K^c$ is a separably
defectless field if and only if $K$ is.

\parm
Let $L|K$ be an arbitrary finite separable extension. The henselian
field $K$ is separable-algebraically closed in $K^c$
(Lemma~\ref{hchc=hc}). Consequently, every finite separable extension of
$K$ is linearly disjoint from $K^c$ over $K$, whence
\[
[L.K^c:K^c]=[L:K].
\]
On the other hand, $L.K^c=L^c$ by Lemma~\ref{Lc=L.Kc}. Consequently,
\[
(v(L.K^c):vK^c)[(L.K^c)v:K^cv]=(vL:vK)[Lv:Kv].
\]
Assume that $K^c$ is a separably defectless field. Then $(L.K^c|K^c,v)$
is defectless, i.e., $[L.K^c:K^c]=(v(L.K^c):vK^c)[(L.K^c)v:K^cv]$.
Hence, $[L:K]=(vL:vK)[Lv:Kv]$, showing that $L|K$ is defectless. We have
shown that $K$ is separably defectless if $K^c$ is.

\parm
Now assume that $K^c$ is not a separably defectless field. Then there
exists a finite Galois extension $L'|K^c$ with nontrivial defect. In
view of Lemma~\ref{dmult}, we may assume that the extension is Galois
(after passing to the normal hull if necessary). We take an irreducible
polynomial $f=X^n+c_{n-1}X^{n-1}+\cdots+c_0\in K^c[X]$ of
which $L'$ is the splitting field. For every $\alpha\in vK$ there are
$d_{n-1},\ldots,d_0\in K$ such that $v(c_i-d_i)\geq \alpha$. If $\alpha$
is large enough, then by Theorem 32.20 of [W], the splitting fields of
$f$ and $g=X^n+d_{n-1}X^{n-1}+\cdots+d_0$ over the henselian field $K^c$
are the same. Consequently, if $L$ denotes the splitting field of $g$
over $K$, then $L'=L.K^c=L^c$. We obtain
\begin{eqnarray*}
[L:K]&\geq& [L.K^c:K^c]=[L':K^c] \\
&>&(vL':vK^c)[L'v:K^cv]=(vL^c:vK^c)[L^cv:K^cv]=(vL:vK)[Lv:Kv].
\end{eqnarray*}
That is, the separable extension $L|K$ is not defectless. Hence, $K$ is
not a separably defectless field.
\end{proof}

The following lemma describes the behaviour of the defect under
composition of valuations:

\begin{lemma}                         \label{comp-def}
Take a finite extension $(L|K,v)$ of henselian fields and a
coarsening $w$ of $v$ on $L$. Then
\[
d(L|K,v)\>=\>d(L|K,w)\cdot d(Lw|Kw,\ovl{w})\>.
\]
In particular, if $d(L|K,v) =1$, then $d(L|K,w) =1$ for every
coarsening $w$ of $v$.
\end{lemma}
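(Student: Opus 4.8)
The plan is to reduce the statement to multiplicativity of the three invariants appearing in the defect formula --- the degree, the ramification index, and the inertia degree --- across the composition $v = w\circ\overline{w}$, using that all fields involved are henselian so that $v$ (and each of its coarsenings) extends uniquely at every stage. First I would recall that, since $(L|K,v)$ is finite and $K$ is henselian, $v$ extends uniquely from $K$ to $L$; by Lemma~\ref{comp-hen}, the coarsening $(K,w)$ and the induced valued residue field $(Kw,\overline{w})$ are again henselian, and likewise for $L$, so that $w$ extends uniquely from $K$ to $L$ and $\overline{w}$ extends uniquely from $Kw$ to $Lw$. In particular all three defects $d(L|K,v)$, $d(L|K,w)$, $d(Lw|Kw,\overline w)$ are well-defined via the Lemma of Ostrowski, without having to pass to henselizations.

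Next I would write out the fundamental data for the composition. The value group $vL$ sits in a short exact sequence $0 \to \overline w(Lw) \to wL \to \ldots$; concretely, using the canonical isomorphisms recorded before Lemma~\ref{comp-hen} ($wK\cong vK/H$ and $\overline w(Kw)\cong H$, where $H$ is the convex subgroup of $vK$ cut out by $w$, and similarly for $L$ with the convex subgroup $H' = H\cap$ ... $= $ the corresponding subgroup of $vL$), one gets the index identity
\[
(vL:vK) \>=\> (wL:wK)\cdot(\overline w(Lw):\overline w(Kw))\;.
\]
On the residue side, the residue field of $(L,v)$ is the residue field of $(Lw,\overline w)$ and similarly for $K$, so $Lv = (Lw)\overline w$ and $Kv = (Kw)\overline w$; hence
\[
[Lv:Kv] \>=\> [(Lw)\overline w : (Kw)\overline w]\;,
\]
and there is nothing to factor here --- this term simply ``belongs'' to the $\overline w$-layer. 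Combining with $[L:K] = [L:K]$ trivially and plugging into the three instances of the Ostrowski formula
\[
[L:K] = d(L|K,v)\,(vL:vK)[Lv:Kv],\quad
[L:K] = d(L|K,w)\,(wL:wK)[Lw:Kw],
\]
\[
[Lw:Kw] = d(Lw|Kw,\overline w)\,(\overline w(Lw):\overline w(Kw))[(Lw)\overline w:(Kw)\overline w]\;,
\]
one substitutes the second and third into the first: $d(L|K,w)$ accounts for the degree $[L:K]$ relative to $[Lw:Kw]$ and the $wL/wK$ part of the value group, while $d(Lw|Kw,\overline w)$ accounts for $[Lw:Kw]$ relative to the remaining $\overline w(Lw)/\overline w(Kw)$ index and the residue-field degree. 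The bookkeeping then forces $d(L|K,v) = d(L|K,w)\cdot d(Lw|Kw,\overline w)$. The ``in particular'' clause is immediate: each factor on the right is a power of $p$ (or $1$) and at least $1$ by Ostrowski, so $d(L|K,v)=1$ forces both factors to be $1$, in particular $d(L|K,w)=1$, and the same argument applies to any coarsening.

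The only genuinely non-formal point --- and the step I expect to be the main obstacle to write cleanly --- is justifying the value-group index identity $(vL:vK) = (wL:wK)\cdot(\overline w(Lw):\overline w(Kw))$, which requires checking that the convex subgroup of $vL$ giving rise to $w$ on $L$ restricts to $H$ on $vK$ and that the two short exact sequences $0\to\overline w(Kw)\to wK\to vK/\ldots$ are compatible; this is a standard diagram chase with convex subgroups, but one must be careful that finiteness of $[L:K]$ (hence of all the indices) is what makes the multiplicativity of indices legitimate. Everything else is a direct application of the Lemma of Ostrowski together with the already-established henselianity of the layers via Lemma~\ref{comp-hen}.
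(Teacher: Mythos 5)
Your proposal is correct and follows essentially the same route as the paper: henselianity of all four layers via Lemma~\ref{comp-hen}, the index identity $(vL:vK)=(wL:wK)\cdot(\ovl{w}(Lw):\ovl{w}(Kw))$ together with $Lv=(Lw)\ovl{w}$ and $Kv=(Kw)\ovl{w}$, and then the factorization of the Ostrowski quotient. The value-group index identity you flag as the delicate point is exactly what the paper uses (silently) in its one-line computation, and your sketch of why it holds is sound.
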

\begin{proof}
Since $(L,v)$ and $(K,v)$ are henselian by assumption, also
$(L,w)$, $(K,w)$, $(Lw,\ovl{w})$ and $(Kw,\ovl{w})$ are henselian by
Lemma~\ref{comp-hen}. Therefore, we can compute:
\begin{eqnarray*}
d(L|K,v) & = & \frac{[L:K]}{(vL:vK)[Lv:Kv]}\>=\>
\frac{[L:K]}{(wL:wK)(\ovl{w}(Lw):\ovl{w}(Kw))[(Lw)\ovl{w}:(Kw)\ovl{w}]}\\
 & = & \frac{[L:K]}{(wL:wK)[Lw:Kw]}\cdot\frac{[Lw:Kw]}
{(\ovl{w}(Lw): \ovl{w}(Kw))[(Lw)\ovl{w}:(Kw)\ovl{w}]}\\[.2cm]
 & = & d(L|K,v)\cdot d(Lw|Kw,\ovl{w})\>.
\end{eqnarray*}
\end{proof}

In the next lemma, the relation between immediate and defectless extensions
is studied.
\begin{lemma}                       \label{lem-imm-def}
Take an arbitrary immediate extension $(F|K,v)$ of valued fields, and
$(L|K,v)$ a finite extension such that $[L:K]=(vL:vK)[Lv:Kv]$. Then
$F|K$ and $L|K$ are linearly disjoint, the extension of $v$ from $F$ to
$L.F$ is unique, $(L.F|F, v)$ is defectless, and $(L.F|L, v)$ is
immediate. Moreover,
\[
[L.F : F] = [L : K]\,,
\]
i.e., $F$ is linearly disjoint from $L$ over $K$.
\end{lemma}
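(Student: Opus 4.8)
The plan is to reduce everything to two bookkeeping facts about the fundamental inequality~(\ref{fiq}). First I would observe that the hypothesis $[L:K]=(vL:vK)[Lv:Kv]$ secretly says that $v$ is the \emph{unique} extension of $v|_K$ to $L$: for any fixed extension of $v$ to $L$, the product $(vL:vK)[Lv:Kv]$ is one of the summands ${\rm e}_i{\rm f}_i$ on the right-hand side of~(\ref{fiq}), and all summands are $\geq 1$, so equality with $[L:K]$ forces $g=1$. In particular, if $w$ is any extension of $v$ from $F$ to $L.F$, then $w|_L$ is an extension of $v|_K$ to $L$, hence $w|_L=v$; thus $w(L.F)\supseteq vL$ and $(L.F)w\supseteq Lv$. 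Also $wF=vF=vK$ and $Fw=Fv=Kv$ because $(F|K,v)$ is immediate.

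Next I would estimate $[L.F:F]$ from both sides. On the one hand $[L.F:F]\leq[L:K]$ always, since $L.F$ is generated over $F$ by generators of $L|K$. On the other hand, applying~(\ref{fiq}) to $(L.F|F,w)$ for any extension $w$ and keeping only the summand for $w$,
\[
[L.F:F]\>\geq\>(w(L.F):wF)\,[(L.F)w:Fw]\>\geq\>(vL:vK)\,[Lv:Kv]\>=\>[L:K]\,,
\]
using the inclusions and the equalities $wF=vK$, $Fw=Kv$ from the previous paragraph. Hence $[L.F:F]=[L:K]$ and every inequality above is an equality; in particular $w(L.F)=vL$ and $(L.F)w=Lv$ for every such $w$. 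If there were two distinct extensions of $v|_F$ to $L.F$, then~(\ref{fiq}) applied to $L.F|F$ would give $[L.F:F]\geq 2[L:K]$, a contradiction; so $v$ extends uniquely from $F$ to $L.F$.

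It then remains to read off the four conclusions. The equality $[L.F:F]=[L:K]$ forces the surjection $L\otimes_K F\to L.F$ to be an isomorphism onto a domain, so $L$ and $F$ are linearly disjoint over $K$ --- this is exactly the ``moreover'' clause. Uniqueness of the extension of $v$ to $L.F$ was shown above. From $w(L.F)=vL$, $(L.F)w=Lv$, $wF=vK$ and $Fw=Kv$ we get $(v(L.F):vF)=(vL:vK)$ and $[(L.F)v:Fv]=[Lv:Kv]$, whence $[L.F:F]=(v(L.F):vF)[(L.F)v:Fv]$, i.e.\ $(L.F|F,v)$ is defectless. Finally $v(L.F)=vL$ and $(L.F)v=Lv$ say precisely that $(L.F|L,v)$ is immediate.

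I do not expect a genuine obstacle here; the one point requiring care is the ``secretly $g=1$'' observation together with the bookkeeping that guarantees any extension of $v$ from $F$ to $L.F$ restricts to the \emph{given} $v$ on $L$. Without that, the chain of inequalities in the second step would be comparing the wrong ramification and residue data, and the argument would collapse.
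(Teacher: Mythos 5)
Your proof is correct and follows essentially the same route as the paper's: sandwiching $[L.F:F]$ between $(vL:vK)[Lv:Kv]$ and $[L:K]$ via the fundamental inequality and reading off all conclusions from the resulting chain of equalities. The only difference is that you make explicit (via the $g=1$ observation for $L|K$) why every extension of $v$ from $F$ to $L.F$ restricts to the given $v$ on $L$, a point the paper leaves implicit; this is a welcome clarification but not a different argument.
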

\begin{proof}
$v(L.F)$ contains $vL$ and $(L.F)v$ contains $Lv$. On the other hand, we
have $vF=vK$ and $Fv=Kv$ by hypothesis. Therefore,
\begin{eqnarray*}
[L.F : F]&\geq&(v(L.F) : vF)\cdot[(L.F)v : Fv]  \\
&\geq&(vL : vK)\cdot[Lv : Kv] = [L : K]\geq [L.F : F]
\end{eqnarray*}
hence equality holds everywhere. This shows that $[L.F : F] = [L : K]$
and that $L.F|F$ is defectless with unique extension of the valuation.
Furthermore, it follows that $v(L.F)=vL$ and $(L.F)v =Lv$, i.e., $L.F|L$
is immediate.
\end{proof}

The reader should note that if the finite extension $L|K$ is not normal
and there are more than one extension of $v$ from $K$ to $L$, then
$d(L|K,v)=1$ does not imply that equality holds in (\ref{fiq}). It may
happen that for one extension of $v$ the henselian defect is $1$ while
for another extension it is $>1$. In this case, the henselian defect
depends on the chosen extension of $v$ from $K$ to $L$. On the other
hand, this will not happen when $L|K$ is normal.

Applying the lemma to purely inseparable extensions $L|K$, we obtain:

\begin{corollary}                           \label{idimmsep}
Every immediate extension of an inseparably defectless field is
separable.
\end{corollary}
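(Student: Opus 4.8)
Since the case $\chara K=0$ is vacuous ($K^{1/p^\infty}=K$ and every extension of $K$ is separable), I would begin by assuming $\chara K=p>0$. For a general immediate extension $(F|K,v)$ the assertion then says that $F$ is linearly disjoint from $K^{1/p^\infty}$ over $K$, which is the notion of separability used in this paper (cf.\ Theorem~\ref{tcs}).

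The heart of the proof is exactly the remark made just before the corollary: one applies Lemma~\ref{lem-imm-def} with $L|K$ a finite purely inseparable extension. First I would record that every such $L|K$ carries a \emph{unique} extension of $v$: if $w_1$ and $w_2$ both extend $v$ to $L$ and $a\in L$ satisfies $a^{p^n}\in K$, then $p^n w_1 a = v(a^{p^n}) = p^n w_2 a$, hence $w_1 a=w_2 a$ since value groups are torsion-free. Thus $g=1$ for $L|K$, so ``$L|K$ is defectless'' means precisely $[L:K]=(vL:vK)[Lv:Kv]$, and this equality holds because $(K,v)$ is inseparably defectless. Consequently the hypotheses of Lemma~\ref{lem-imm-def} are met with this $L$ and our $F$, and the lemma yields $[L.F:F]=[L:K]$, i.e.\ $L$ and $F$ are linearly disjoint over $K$.

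To finish, I would pass to the direct limit: $K^{1/p^\infty}$ is the directed union of its finite purely inseparable subextensions $L$ over $K$, and since any linear dependence relation involves only finitely many elements, linear disjointness of each such $L$ with $F$ over $K$ forces linear disjointness of $K^{1/p^\infty}$ with $F$ over $K$. Hence $F|K$ is separable. The only point requiring a genuine (though routine) argument is the uniqueness of the valuation on purely inseparable extensions, which is what lets ``defectless'' coincide with the numerical identity demanded by Lemma~\ref{lem-imm-def}; apart from that, the corollary is an immediate consequence of that lemma, so I anticipate no real obstacle.
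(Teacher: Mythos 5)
Your proposal is correct and follows essentially the same route as the paper: the authors likewise apply Lemma~\ref{lem-imm-def} to each finite purely inseparable extension $L|K$ (using that inseparably defectless gives $[L:K]=(vL:vK)[Lv:Kv]$) and then conclude that $K^{1/p^{\infty}}$ is linearly disjoint from $F$ over $K$. Your added remarks on the uniqueness of the valuation extension and the passage to the direct limit merely make explicit steps the paper leaves implicit.
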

\begin{proof}
If $(K,v)$ is an inseparably defectless field and $(F|K,v)$ an
immediate extension, then every finite purely inseparable extension
$(L|K,v)$ satisfies $[L:K]=(vL:vK)[Lv:Kv]$, and $L$ is therefore
linearly disjoint from $F$ over $K$ by the previous lemma. It follows
that also $K^{1/p^{\infty}}$ is linearly disjoint from $F$ over $K$.
\end{proof}

\parm
In the following we give two basic examples for extensions with defect
$>1$ (one can find more nasty examples in [K5] and [K7]).
The following is due to F.~K.~Schmidt.

\begin{example}                             \label{exampFKS}
{\rm We consider $\Fp((t))$ with its canonical valuation $v=v_t\,$.
Since $\Fp((t))|\Fp(t)$ has infinite transcendence degree, we can choose
some element $s\in\Fp((t))$ which is transcendental over $\Fp(t)$. Since
$(\Fp((t))|\Fp(t),v)$ is an immediate extension, the same holds for
$(\Fp(t,s)|\Fp(t),v)$ and thus also for $(\Fp(t,s)|\Fp(t,s^p),v)$. The
latter extension is purely inseparable of degree $p$ (since $s,t$ are
algebraically independent over $\Fp\,$, the extension $\Fp(s)|\Fp(s^p)$
is linearly disjoint from $\Fp(t,s^p)|\Fp(s^p)\,$). Hence, there is only
one extension of the valuation $v$ from $\Fp(t,s^p)$ to $\Fp(t,s)$. So
we have $e=f=g=1$ for this extension and consequently, its defect is
$p$}.
\end{example}

A defect can appear ``out of nothing'' when a finite extension
is lifted through another finite extension:

\begin{example}                             \label{exampindef}
{\rm In the foregoing example, we can choose $s$ such that $vs>1=vt$.
Now we consider the extensions $(\Fp(t,s^p)|\Fp(t^p,s^p),v)$ and
$(\Fp(t+s,s^p) |\Fp(t^p,s^p),v)$ of degree $p$. Both are defectless:
since $v\Fp(t^p,s^p) =p\Z$ and $v(t+s)=vt=1$, the index of $v\Fp(t^p,
s^p)$ in $v\Fp(t,s^p)$ and in $v\Fp(t+s,s^p)$ must be (at least) $p$.
But $\Fp(t,s^p).\Fp(t+s, s^p) = \Fp(t,s)$, which shows that the
defectless extension $(\Fp(t,s^p)| \Fp(t^p,s^p),v)$ does not remain
defectless if lifted up to $\Fp(t+s,s^p)$ (and vice versa)}.
\end{example}

%
%
\subsection{Defect of h-finite extensions}
%
For a finite extension $(L|K,v)$ such that $v$ extends uniquely from $K$
to $L$, the defect measures how far the fundamental inequality
(\ref{fiq}) is from being an equality. More generally, this can be done
for every algebraic extension $(L|K,v)$ such that $(L^h|K^h,v)$ is
finite, i.e., $(L|K,v)$ is an h-finite extension. This requires that we
work with a fixed extension of $v$ to the algebraic closure $\tilde{K}$,
which in turn determines the henselizations of $K$ and all its algebraic
extensions. In this case, we set
\[
d(L|K,v):=\frac{[L^h:K^h]}{(vL:vK)[Lv:Kv]}.
\]
Since the henselization is an immediate extension, we have that
$vK^h=vK$, $K^h v=Kv$, $vL^h=vL$ and $L^h v=Lv$. Therefore $d(L|K,v)$
is equal to the earlier defined defect of the extension $(L^h|K^h,v)$.
It is called the \textbf{henselian defect} or just the \textbf{defect}
of $(L|K,v)$ as $d(L|K,v)=d(L^h|K^h,v)$ when $v$ extends uniquely from
$K$ to $L$.

In general, the defect can increase or decrease if an h-finite extension
is lifted up through another extension. A defectless extension may turn
into an extension with nontrivial defect after lifting up through an
algebraic extension (as seen in Example~\ref{exampindef}). On the other
hand, every h-finite extension with nontrivial defect of a valued field
$(K,v)$ becomes trivial and thus defectless if lifted up to the
algebraic closure $\tilde{K}$. At least we can show that if the defect
decreases, then there is no further descent after a suitable finitely
generated extension.

As a preparation, we need the following fact which at first glance may
appear to be obvious. But a closer look reveals that proving it is more
difficult than expected. In order to get a feeling for the hidden
difficulties, the reader should note that if $K(x)|K$ is a simple
transcendental extension and we take an element $c$ in the henselization
of $K(x)$ which is algebraic over $K$, it may not lie in the
henselization of $K$ (cf.\ Theorem 1.3 of [K3]).

\begin{lemma}                               \label{fofdefh}
Take an arbitrary extension $(L|K,v)$ and elements $c_1,\ldots,c_m\in
L^h$. Then there exist elements $d_1,\ldots,d_n\in L$ such that
$c_1,\ldots,c_m\in K(d_1,\ldots,d_n)^h$.
\end{lemma}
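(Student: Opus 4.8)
We need to show: given an arbitrary valued field extension $(L|K,v)$ and finitely many elements $c_1,\ldots,c_m$ in the henselization $L^h$, we can find finitely many elements $d_1,\ldots,d_n \in L$ such that all the $c_i$ already lie in $K(d_1,\ldots,d_n)^h$.

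The key insight here is about the structure of henselizations. $L^h$ is obtained as a direct limit / union of finite separable extensions of $L$ that are "split" in a particular way — these are the finite subextensions of $L^h/L$. Each $c_i$ lies in such a finite extension.

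Key steps I'd take:

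1. **Reduce to one element or handle all at once**: Since there are finitely many $c_i$, and the henselization is an algebraic extension of $L$, the field $L(c_1,\ldots,c_m)$ is a finite extension of $L$ contained in $L^h$. Actually the henselization $L^h$ is a separable-algebraic extension, so each $c_i$ is separable-algebraic over $L$.

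2. **Write the minimal polynomials with coefficients from $L$**: Each $c_i$ satisfies a monic polynomial $f_i(X) \in L[X]$. Collect all the coefficients of all these polynomials — call them $d_1, \ldots, d_n \in L$.

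3. **Show $c_i \in K(d_1,\ldots,d_n)^h$**: Set $K_0 = K(d_1,\ldots,d_n)$. Then each $f_i \in K_0[X]$. Now I need: the roots of $f_i$ that live in $L^h$ actually live in $K_0^h$.

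The subtle point (flagged by the authors themselves in the remark preceding the lemma — the issue with Theorem 1.3 of [K3]) is that **being in the henselization is not just an algebraic condition** — it's about the *valuation-theoretic* behavior. The element $c_i$ is in $L^h$ because $L^h$ is henselian and picks out a specific root. The question is whether, over the smaller field $K_0$, the henselization $K_0^h$ (which sits inside $L^h$ via our fixed extension to $\tilde{K}$) still contains that same root.

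**My proof strategy**: Use the universal property / Hensel's lemma characterization. Since $c_i \in L^h$ and $L^h$ is henselian, there's a structural reason. I'd try to characterize $L^h$ as $L$ together with "all elements of $\tilde{K}$ that become 'approximable' or 'Hensel-liftable' over $L$." More concretely: I'd use the fact that $L^h = L \cdot K^h$ when things are nice, but here $L/K$ may be transcendental so that doesn't directly apply.

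The cleaner approach: Each $c_i$, being in the separable-algebraic henselization $L^h$, generates a finite extension $L(c_i)$ such that $v$ extends uniquely... no wait, that's not right either.

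**Better**: The henselization $L^h$ inside $\tilde{L}$ (with our fixed valuation) is characterized by: $L^h = \{$ elements fixed by the decomposition group / elements $z$ such that every $L$-conjugate of $z$ realizing the same valuation-extension equals $z$ $\}$... Let me think about the actual mechanism.

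Given $c \in L^h$: its minimal polynomial $f$ over $L$ — when we pass to $L^h[X]$, $f$ has a root (namely $c$) and by Hensel, this root is "simple modulo the maximal ideal after suitable manipulation." The standard fact: $c \in L^h$ iff there's a polynomial $g \in L[X]$ and a value $\alpha$ such that $g$ has a simple root "mod $\alpha$" that determines $c$ uniquely (Hensel-Rychlik / Newton). If we take $d_i$'s to be the coefficients of such a $g$ (and $\alpha$ is in the value group, realized by elements of $L$ or just abstractly — values are determined by the field), then over $K_0 = K(d_i)$, the same Hensel's lemma argument applies: $K_0^h$ also contains that root, which must be $c$.

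So the **cleanest formulation of my plan**:
- For each $c_i$, use the characterization: since $c_i \in L^h$, there exists a polynomial $g_i \in L[X]$ and elements of $L$ witnessing that $g_i$ satisfies the Hensel condition with a simple root $\tilde{c}_i$, and $c_i$ is (a polynomial expression in) $\tilde{c}_i$.
- Let $d_1, \ldots, d_n$ be all coefficients of all the $g_i$, plus any finitely many witnessing elements from $L$.
- Over $K_0 = K(d_1,\ldots,d_n)$: the polynomials $g_i \in K_0[X]$ still satisfy the Hensel condition (the condition is about coefficients and values, all now available in $K_0$), so $K_0^h$ contains the corresponding roots, hence $c_i \in K_0^h$.

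**The main obstacle**: Pinning down exactly *which* finite data from $L$ certifies membership in $L^h$, and making sure that data lives in $L$ (not just $L^h$) and is finite. The authors warn this is "more difficult than expected." The risk is circularity: the natural certificate for "$c \in L^h$" might itself reference elements of $L^h$. I need the Hensel/Newton-polygon certificate to use only $L$-elements. I believe this works because $L^h/L$ is immediate — so any value or residue appearing in a Hensel-lifting argument is already achieved in $L$ — but making this airtight is the heart of the proof.

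=== PROOF PROPOSAL ===

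\begin{proof}
The plan is to realize each $c_i$ via a Hensel-type certificate whose
data already lies in $L$, and then to gather this finite amount of data
into the required elements $d_1,\ldots,d_n$.

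\parm
First recall that the henselization $L^h$ is the union of all finite
subextensions of $L^h|L$, and each such extension is
separable-algebraic. Hence each $c_i$ is separable-algebraic over $L$;
let $f_i\in L[X]$ be its minimal polynomial over $L$, and let
$e_1,\ldots,e_k\in L$ be all the coefficients occurring in
$f_1,\ldots,f_m$. Since $c_i\in L^h$ and $(L,v)^h$ is henselian,
Hensel's Lemma (in the Newton/Rychlik form, cf.\ [EP]) provides, for
each $i$, a value $\alpha_i\in vL^h=vL$ (the equality because the
henselization is immediate) such that $f_i$ has a unique root in $L^h$
of the required proximity: namely, $v(f_i(c_i))>2\alpha_i$ while
$v(f_i'(c_i))=\alpha_i$, and $c_i$ is the only element of $L^h$ with
these properties. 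Choose $a_i\in L$ with $va_i=\alpha_i$; such $a_i$
exists because $vL=vL^h$.

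\parm
Now set $K_0:=K(e_1,\ldots,e_k,a_1,\ldots,a_m)$ and note $K_0\subseteq
L$; we take $d_1,\ldots,d_n$ to be an enumeration of the generators
$e_1,\ldots,e_k,a_1,\ldots,a_m$. Then $f_i\in K_0[X]$ for every $i$. We
claim $c_i\in K_0^h$. Indeed, $K_0^h$ is henselian, so Hensel's Lemma
applies over $K_0^h$ as well: since $v(f_i(c_i))>2\,v a_i$ and
$v(f_i'(c_i))=v a_i$ with $f_i\in K_0^h[X]$, there is a unique element
$c_i'\in K_0^h$ satisfying the same proximity conditions relative to
$f_i$ and $a_i$. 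Both $c_i$ and $c_i'$ lie in $L^h$, which is
henselian and contains $K_0^h$; by the uniqueness part of Hensel's
Lemma applied in $L^h$ (the proximity conditions single out one root),
we get $c_i'=c_i$. Hence $c_i\in K_0^h=K(d_1,\ldots,d_n)^h$, as
required.

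\parm
The delicate point, already flagged before the statement of the lemma,
is that membership of an algebraic element in the henselization is not
a purely algebraic condition but a valuation-theoretic one; the
argument above circumvents this by observing that the Hensel certificate
for $c_i$ consists of the coefficients of $f_i$ together with a single
element $a_i$ realizing the relevant value $\alpha_i$, and that
\emph{all of these already lie in $L$} precisely because $L^h|L$ is
immediate, so the value group and residue field do not grow. Once the
certificate is in $L$, it is automatically a certificate over the
subfield $K_0$ generated by it, and henselianity of $K_0^h$ does the
rest.
\end{proof}
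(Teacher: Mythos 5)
The paper does not actually prove this lemma: it is quoted from [K8] (``Henselian elements'', in preparation), precisely because, as the authors warn just before the statement, proving it ``is more difficult than expected''. So there is no in-paper proof to compare with, and your argument has to stand on its own. It does not: the decisive step is an invalid application of Hensel's Lemma.

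The Hensel--Rychlik form of Hensel's Lemma produces a root of $f_i$ in $K_0^h$ only by refining an \emph{approximate root that already lies in $K_0^h$}: one needs some $b\in K_0^h$ with $v(f_i(b))>2v(f_i'(b))$, and the lemma then yields a root $c_i'$ with $v(c_i'-b)>v(f_i'(b))$. You never exhibit such a $b$. The conditions you transport to $K_0^h$, namely $v(f_i(c_i))>2\alpha_i$ and $v(f_i'(c_i))=\alpha_i$, are conditions on the element $c_i\in L^h$ whose membership in $K_0^h$ is exactly what is to be proved; worse, since $f_i$ is the minimal polynomial of $c_i$, the first condition reads $v(0)=\infty>2\alpha_i$ and is vacuous, and the second is satisfied by every conjugate of $c_i$ under the relevant decomposition group, so these conditions neither produce an element of $K_0^h$ nor single out $c_i$ among the roots of $f_i$. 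Adjoining $a_i\in L$ with $va_i=\alpha_i$ contributes nothing, since membership in a henselization is not certified by the mere availability of a value. To repair the argument along these lines you would need, for each $i$, an honest approximate root $b_i\in L$ (hence in a finitely generated subfield) with $v(c_i-b_i)$ exceeding the Krasner constant of $c_i$ over $K_0$ --- and note that over the smaller field $K_0$ the minimal polynomial of $c_i$ may acquire additional roots, so this constant can be strictly larger than the one over $L$. The existence of such approximations from $L$, rather than from $L^h$, is precisely the nontrivial point the authors flag (cf.\ their reference to Theorem 1.3 of [K3], and compare Theorem~\ref{disMT2}) and is the content of [K8]. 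Your closing appeal to immediacy of $L^h|L$ is where the real work is assumed rather than done: immediacy gives you values and residues in $L$, not good approximations to $c_i$ by elements of $L$.
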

\n
This lemma is proved in [K8]. Now we are ready to prove the following
result.

\begin{lemma}                                       \label{eeu}
Let $(L|K,v)$ and $(F|K,v)$ be subextensions of a valued field extension
$(\Omega|K,v)$ such that $F|K$ is finitely generated and $L.F|L$ is
h-finite. Then there exists a finitely generated subextension $L_0|K$ of
$L|K$ such that for every subfield $L_1$ of $L$ containing $L_0$, the
following holds:
\begin{enumerate}
\item $[(L.F)^h:L^h] = [(L_1.F)^h:L_1^h]$,  
\item $(v(L.F):vL)\leq (v(L_1.F):vL_1)$,    
\item $\left[(L.F)v:Lv\right]\leq \left[(L_1.F)v:L_{1}v\right]$,
\item $d(L.F|L,v)\geq d(L_1.F|L_1,v)$.      
\end{enumerate}
\end{lemma}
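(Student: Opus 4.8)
The plan is to bootstrap from Lemma~\ref{fofdefh}. Since $L.F|L$ is h-finite, the field $(L.F)^h$ is a finite extension of $L^h$; pick a finite set of generators $c_1,\dots,c_m$ of $(L.F)^h$ over $L^h$, which we may take to be generators coming from $F$ together with witnesses needed below. Since $F|K$ is finitely generated, say $F=K(z_1,\dots,z_r)$, and the $z_i$ lie in $L.F\subseteq (L.F)^h$; enlarge the list $c_1,\dots,c_m$ to include the $z_i$. Applying Lemma~\ref{fofdefh} to the extension $(L.F|K,v)$ — more precisely to $(L.F|L,v)$ viewed inside $\Omega$, but run with base field ranging over finitely generated subextensions of $L$ — produces finitely many elements $d_1,\dots,d_n\in L.F$ with $c_1,\dots,c_m\in L'(d_1,\dots,d_n)^h$ for an appropriate finitely generated $L'\subseteq L$; absorbing the $F$-parts into $F$, one arrives at a finitely generated $L_0\subseteq L$ such that $(L.F)^h=(L_0.F)^h\cdot L^h$, i.e. $(L.F)^h$ is generated over $L^h$ by $(L_0.F)^h$. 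The key consequence is $[(L.F)^h:L^h]\le[(L_0.F)^h:L_0^h]$, since the minimal polynomial of a generator of $(L_0.F)^h$ over $L_0^h$ still annihilates it over the larger field $L^h$. This gives one inequality in item (1); the reverse inequality is the standard fact that degree cannot increase under base change from $L_0^h$ to $L^h$ (realized inside the common extension $(L.F)^h$, noting $L_1.F$ sits between), so in fact $[(L.F)^h:L^h]$ equals $[(L_1.F)^h:L_1^h]$ for all $L_1$ with $L_0\subseteq L_1\subseteq L$, since this degree is squeezed: it is $\le[(L_0.F)^h:L_0^h]$ and $\ge[(L_1.F)^h:L_1^h]\ge[(L.F)^h:L^h]$ by monotonicity in the base — wait, one must be careful about the direction of monotonicity, so the argument is: $[(L.F)^h:L^h]\le[(L_1.F)^h:L_1^h]\le[(L_0.F)^h:L_0^h]=[(L.F)^h:L^h]$, forcing equality throughout. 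This establishes item (1).

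Next I turn to items (2) and (3). Here I would again use a finite-generation argument, this time on value groups and residue fields. Choose finitely many elements $a_1,\dots,a_k\in L.F$ whose values generate $v(L.F)$ modulo $vL$ (possible since $(v(L.F):vL)\le[(L.F)^h:L^h]<\infty$), and finitely many $b_1,\dots,b_\ell\in L.F$, all with nonnegative value, whose residues generate $(L.F)v$ over $Lv$ (again finite index by item (1)). Writing each $a_j,b_j$ as a rational expression in finitely many elements of $L$ and of $F$, there is a finitely generated $L_0'\subseteq L$ containing all the needed $L$-ingredients so that $a_j,b_j\in L_0'.F$. Then the values $va_j$ already lie in $v(L_0'.F)$ and the residues $b_jv$ already lie in $(L_0'.F)v$, so $(v(L.F):vL)\le(v(L_0'.F):vL_0')$ and $[(L.F)v:Lv]\le[(L_0'.F)v:L_0'v]$ — the inequalities being in the stated direction because $vL_0'\subseteq vL$ and $L_0'v\subseteq Lv$, so passing to the larger base $L$ can only shrink (or preserve) the relative index of the generated extension. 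The same bound holds with any intermediate $L_1$ in place of $L_0'$, since $v(L_1.F)\supseteq v(L_0'.F)$ still contains the $va_j$ and likewise for residues. Now set $L_0$ to be the compositum of the finitely generated fields produced for items (1), (2), (3); it is still finitely generated over $K$, and all three conclusions hold for every $L_1$ with $L_0\subseteq L_1\subseteq L$.

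Finally, item (4) is a formal consequence of (1)–(3): by definition of the henselian defect,
\[
d(L_1.F|L_1,v)=\frac{[(L_1.F)^h:L_1^h]}{(v(L_1.F):vL_1)\,[(L_1.F)v:L_1v]}
\le\frac{[(L.F)^h:L^h]}{(v(L.F):vL)\,[(L.F)v:Lv]}=d(L.F|L,v),
\]
where the inequality uses that the numerator is equal by (1) while the two factors in the denominator are at least as large for $L$ as for $L_1$ by (2) and (3). The main obstacle is item (1): the subtlety flagged in the remark preceding Lemma~\ref{fofdefh} — that algebraic-over-$K$ elements of $K(x)^h$ need not lie in $K^h$ — means one genuinely needs Lemma~\ref{fofdefh} (not a naive compactness argument) to descend the henselization generators to a finitely generated subfield of $L$, and then some care is needed to organize the $F$-coordinates versus the $L$-coordinates of the $d_i$'s so that the descent lands inside $L_0.F$ with $L_0\subseteq L$ finitely generated. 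Once that descent is in hand, (2)–(4) are routine finite-generation and monotonicity bookkeeping.
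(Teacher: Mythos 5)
Your handling of items (2), (3) and (4) coincides with the paper's argument: choose finitely many $a_j,b_j\in L.F$ whose values generate $v(L.F)$ over $vL$ and whose residues generate $(L.F)v$ over $Lv$, put the finitely many elements of $L$ needed to express them into $L_0$, and observe that index and residue degree can only grow when the base shrinks from $L$ to $L_1$; item (4) is then formal. The strategy for item (1), descending henselization data to a finitely generated subfield via Lemma~\ref{fofdefh}, is also the paper's. But your execution of item (1) has a genuine gap. You apply Lemma~\ref{fofdefh} to a set of \emph{generators} of $(L.F)^h$ over $L^h$ and extract the generation statement $(L.F)^h=L^h.(L_0.F)^h$. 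That statement is in fact automatic for \emph{every} intermediate field $L_1$ (the compositum $L^h.(L_1.F)^h$ is an algebraic extension of $L^h$, hence henselian by Lemma~\ref{Lh=L.Kh}, and it contains $L.F$), and it only yields $[(L.F)^h:L^h]\le[(L_1.F)^h:L_1^h]$ --- the same direction as your ``monotonicity in the base''. The reverse inequality $[(L_0.F)^h:L_0^h]\le[(L.F)^h:L^h]$, which you need to close the squeeze, is asserted in your final chain (you write $[(L_0.F)^h:L_0^h]=[(L.F)^h:L^h]$, precisely where you flag your own uncertainty about directions) but never proved, and it does not follow from anything you have established: degrees can strictly drop under base change, so the generation statement places no upper bound on $[(L_0.F)^h:L_0^h]$.

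The missing idea, and the way the paper proceeds, is to apply Lemma~\ref{fofdefh} not to generators of $(L.F)^h$ but to the \emph{coefficients} $c_1,\ldots,c_m\in L^h$ of the minimal polynomials of $z_1,\ldots,z_t$ over $L^h$, where $F=K(z_1,\ldots,z_t)$. Since these $c_i$ lie in $L^h$, the lemma is invoked for the extension $(L|K,v)$ itself and returns $d_1,\ldots,d_n\in L$ directly, with no need to ``absorb $F$-parts''. Once $d_1,\ldots,d_n\in L_1$, those minimal polynomials have coefficients in $K(d_1,\ldots,d_n)^h\subseteq L_1^h$; being irreducible over the larger field $L^h$, they remain irreducible over the subfield $L_1^h$, and this is what forces $[L_1^h(z_1,\ldots,z_t):L_1^h]=[L^h(z_1,\ldots,z_t):L^h]$, i.e.\ item (1) (and, incidentally, the finiteness of the left-hand sides in items (2) and (3), which is needed before item (4) makes sense). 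Without descending the minimal-polynomial coefficients you cannot rule out that $[(L_1.F)^h:L_1^h]$ exceeds $[(L.F)^h:L^h]$ for every finitely generated $L_1$.
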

\begin{proof}
Since $[(L.F)^h:L^h]$ is finite, the fundamental inequality (\ref{fiq})
shows that $(v(L.F):vL)$ and $[(L.F)v:Lv]$ are finite too. Hence there
exist $a_1,\ldots,a_r\in L.F$ such that
\[
v(L.F) = vL + \Z va_1 + \ldots + \Z va_r\;,
\]
and there exist $b_1,\ldots,b_s\in L.F$ such that
\[
(L.F)v = Lv(b_1 v,\ldots,b_s v)\;.
\]
In order to write out $a_1,\ldots,a_r,b_1,\ldots,b_s$ as elements of the
compositum $L.F$, we need finitely many elements $a'_1,\ldots,a'_k,
b'_1,\ldots,b'_\ell\in L$. Whenever $L_1\subseteq L$ is an extension of
$K$ which contains these elements, then $a_1,\ldots,a_r,b_1,\ldots,b_s
\in L_1.F$ and it follows that
\begin{eqnarray}                                           \label{inc3+}
(v(L_1.F):vL_1) & \geq & (v(L.F):vL)\>,\\
{[(L_1.F)v:L_{1}v]} & \geq & [(L.F)v:Lv]\>,                \label{inc4+}
\end{eqnarray}
the left hand sides not necessarily being finite.

\pars
Now if $L_1$ satisfies assertion 1 of our lemma,
then the left hand sides of (\ref{inc3+}) and (\ref{inc4+})
have to be finite, and we will have that
\begin{eqnarray*}
d(L_1.F|L_1,v) & = & \frac{[(L_1.F)^h:L_1^h]}
{(v(L_1.F):vL_1)\cdot \left[(L_1.F)v:L_{1}v\right]}\\[3pt]
& \leq & \frac{[(L.F)^h:L^h]}
{(v(L.F):vL)\cdot \left[(L.F)v:Lv\right]}
\;=\; d(L.F|L,v)\;.
\end{eqnarray*}

Since $F|K$ is finitely generated by assumption, we can write $F=
K(z_1,\ldots,z_t)$. Then $z_1,\ldots,z_t$ are algebraic over $L^h$, and
we take $c_1,\ldots,c_m\in L^h$ to be all of the coefficients appearing
in their minimal polynomials. By Lemma~\ref{fofdefh} there exist
elements $d_1,\ldots,d_n\in L$ such that $c_1,\ldots,c_m\in K(d_1,
\ldots,d_n)^h$. Hence as soon as $d_1,\ldots,d_n\in L_1$, $z_1,\ldots,
z_t$ are algebraic over $L_1^h$ with $[L_1^h(z_1,\ldots,z_t): L_1^h]=
[L^h(z_1,\ldots,z_t):L^h]$. It then follows that $F|L_1$ is algebraic,
so that $(L_1.F)^h=L_1^h.F$ by Lemma~\ref{Lh=L.Kh} (where we take
$L=L_1.F$ and $K=L_1$). This yields the equalities
\begin{eqnarray*}
[(L_1.F)^h:L_1^h] & = & [L_1.F^h:L_1^h]\>=\>[L_1^h(z_1,\ldots,z_t):L_1^h]
\>=\> [L^h(z_1,\ldots,z_t):L^h]\\
 & = & [L^h.F:L^h]\>=\>[(L.F)^h:L^h]\>,
\end{eqnarray*}
so that assertion 1 is satisfied. Hence if we set $L_0 :=
K(a'_1,\ldots,a'_k, b'_1,\ldots,b'_\ell, d_1,\ldots,d_n)$, then all
assertions of our lemma will be satisfied by every subfield
$L_1\subseteq L$ that contains $L_0\,$.
\end{proof}

%
%
\subsection{Transcendence bases of valued function fields}
\label{secVFF}
For the easy proof of the following lemma, see [B], chapter VI,
\S10.3, Theorem~1.
\begin{lemma}                                      \label{K(T)}
Let $(L|K,v)$ be an extension of valued fields. Take elements $x_i,y_j
\in L$, $i\in I$, $j\in J$, such that the values $vx_i\,$, $i\in I$,
are rationally independent over $vK$, and the residues $y_jv$, $j\in
J$, are algebraically independent over $Kv$. Then the elements
$x_i,y_j$, $i\in I$, $j\in J$, are algebraically independent over $K$.

Moreover, if we write
\[f\>=\> \displaystyle\sum_{k}^{} c_{k}\,
\prod_{i\in I}^{} x_i^{\mu_{k,i}} \prod_{j\in J}^{} y_j^{\nu_{k,j}}\in
K[x_i,y_j\mid i\in I,j\in J]\]
in such a way that for every $k\ne\ell$
there is some $i$ s.t.\ $\mu_{k,i}\ne\mu_{\ell,i}$ or some $j$ s.t.\
$\nu_{k,j}\ne\nu_{\ell,j}\,$, then
\begin{equation}                            \label{value}
vf\>=\>\min_k\, v\,c_k \prod_{i\in I}^{}
x_i^{\mu_{k,i}}\prod_{j\in J}^{} y_j^{\nu_{k,j}}\>=\>
\min_k\, vc_k\,+\,\sum_{i\in I}^{} \mu_{k,i} v x_i\;.
\end{equation}
That is, the value of the polynomial $f$ is equal to the least of the
values of its monomials. In particular, this implies:
\begin{eqnarray*}
vK(x_i,y_j\mid i\in I,j\in J) & = & vK\oplus\bigoplus_{i\in I}
\Z vx_i\\
K(x_i,y_j\mid i\in I,j\in J)v & = & Kv\,(y_jv\mid j\in J)\;.
\end{eqnarray*}
Moreover, the valuation $v$ on $K(x_i,y_j\mid i\in I,j\in J)$ is
uniquely determined by its restriction to $K$, the values $vx_i$ and
the residues $y_jv$.

\parm
Conversely, if $(K,v)$ is any valued field and we assign to the
elements $vx_i$ any values in an ordered abelian group extension of $vK$
which are rationally independent, then (\ref{value}) defines a valuation
on $F$, and the residues $y_jv$, $j\in J$, are algebraically independent
over $Kv$.
\end{lemma}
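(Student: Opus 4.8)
The plan is to prove the value formula (\ref{value}) first and to read off every other assertion from it. Since any polynomial relation, any single value, and any single residue involves only finitely many of the $x_i,y_j$, I would begin by reducing to finite $I$ and $J$. The heart of the matter is to keep the two hypotheses apart. First I would treat the special case in which only the $y_j$ occur: if $g=\sum_k c_k\prod_j y_j^{\nu_{k,j}}$ with $c_k\in K^{\times}$ and pairwise distinct exponent vectors, then $vg=\min_k vc_k$. To see this, choose $k_0$ with $vc_{k_0}=\min_k vc_k$; then all coefficients of $g/c_{k_0}$ have value $\geq 0$, and the residue of $g/c_{k_0}$ is $\sum_k\overline{c_k/c_{k_0}}\prod_j(y_jv)^{\nu_{k,j}}$, a polynomial over $Kv$ in the $y_jv$ whose $k_0$-th coefficient is $1$; since the exponent vectors are distinct and the $y_jv$ are algebraically independent over $Kv$, it is nonzero, so $v(g/c_{k_0})=0$, i.e.\ $vg=vc_{k_0}$ (and $g\neq 0$ in $L$).

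Next, for a general $f=\sum_k c_k\prod_i x_i^{\mu_{k,i}}\prod_j y_j^{\nu_{k,j}}$ I would group the monomials by their $x$-exponent vector, writing $f=\sum_G\bigl(\prod_i x_i^{\mu^G_i}\bigr)g_G$ with each $g_G$ of the shape just handled. By the special case each $g_G$ is nonzero with $vg_G=\min_{k\in G}vc_k$, so $v\bigl(\prod_i x_i^{\mu^G_i}g_G\bigr)=\sum_i\mu^G_i vx_i+vg_G$. For distinct $G$ the $x$-exponent vectors differ, so two such values differ by $\sum_i(\mu^G_i-\mu^{G'}_i)vx_i$, with not all integers zero, plus an element of $vK$; by rational independence of the $vx_i$ over $vK$ they lie in distinct cosets of $vK$, hence are pairwise distinct. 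The ultrametric law then forces $vf=\min_G v\bigl(\prod_i x_i^{\mu^G_i}g_G\bigr)=\min_k\bigl(vc_k+\sum_i\mu_{k,i}vx_i\bigr)$, which is (\ref{value}); in particular $f\neq 0$ in $L$ whenever $f$ is a nonzero polynomial, so the $x_i,y_j$ are algebraically independent over $K$.

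From (\ref{value}) the remaining claims follow by bookkeeping. Since $v(f/g)=vf-vg$ lies in $vK+\sum_i\Z vx_i$ and that sum is direct (rational independence), we get $vK(x_i,y_j\mid i\in I,j\in J)=vK\oplus\bigoplus_i\Z vx_i$. If $h=f/g$ has value $0$ then $vf=vg$, so the monomials of $f$ and of $g$ attaining the minimum in (\ref{value}) share the same $x$-exponent vector (otherwise $vf$ and $vg$ would lie in different cosets of $vK$); cancelling this common $x$-monomial together with the value of the coefficient, $h$ becomes a unit times a quotient of residues of $y$-polynomials, whence $hv\in Kv(y_jv\mid j\in J)$, i.e.\ $K(x_i,y_j\mid i\in I,j\in J)v=Kv(y_jv\mid j\in J)$. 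Uniqueness of $v$ is then clear, since (\ref{value}) computes $vf$ on $K[x_i,y_j]$, and hence on $K(x_i,y_j)$, purely from $v|_K$ and the values $vx_i$.

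For the converse I would define $v$ on $K[x_i,y_j]$ by (\ref{value}); well-definedness and $v(f+g)\geq\min(vf,vg)$ are immediate term by term, and the only real work is multiplicativity $v(fg)=vf+vg$, where $\geq$ is again termwise. For the reverse inequality I would fix a total order on the exponent lattice $\Z^{(I)}\oplus\Z^{(J)}$ compatible with addition, call a monomial of $f$ \emph{leading} if it attains the minimum defining $vf$ and is largest in this order among those that do, likewise for $g$; then the product of the leading monomials of $f$ and of $g$ has a unique decomposition as such a product, so its coefficient in $fg$ does not cancel and contributes value exactly $vf+vg$, giving $v(fg)\leq vf+vg$. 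That the $vx_i$ are then rationally independent over $vK$ is visible from (\ref{value}), and that the $y_jv$ are algebraically independent over $Kv$ follows because a nontrivial polynomial relation among them, lifted to the valuation ring, would produce a polynomial in the $y_j$ of value $0$ with vanishing residue. The hard part is really the interplay in the second and third paragraphs — keeping the rational independence of the $vx_i$ and the algebraic independence of the $y_jv$ separate, and checking that distinct $x$-monomial values fall in distinct cosets of $vK$ — together with the monomial-order argument showing that the leading term of a product survives.
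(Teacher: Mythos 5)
Your proof is correct. Note, however, that the paper itself does not prove this lemma at all: it simply cites Bourbaki, \emph{Commutative Algebra}, chapter VI, \S 10.3, Theorem~1, so there is no in-paper argument to compare against. Your self-contained proof is the standard one for monomial (Gauss-type) valuations, and every step checks out: the reduction to finite $I,J$; the special case of $y$-polynomials via the residue map (using that distinct exponent vectors and algebraic independence of the $y_jv$ make the reduced polynomial nonzero); the grouping by $x$-exponent vector together with the observation that distinct $x$-exponents put the group values in distinct cosets of $vK$, so the ultrametric minimum is attained and gives (\ref{value}); and the converse via a total order on the exponent lattice compatible with addition, which isolates a unique minimal-value decomposition of the product of leading monomials and yields $v(fg)\le vf+vg$. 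Two places are compressed but easily completed: in the residue-field computation you should say explicitly that \emph{all} minimum-attaining monomials of $f$ (and of $g$) share one $x$-exponent vector, that $vg_G=vg'_{G}\in vK$ so one can divide both by a single $c\in K$ of that value before passing to residues, and that the remaining factor is a $1$-unit; and in the multiplicativity argument you should note that decompositions $M_1M_2=M_fM_g$ in which one factor does not attain the minimum contribute strictly larger value, so only the unique leading-times-leading term matters. Neither point is a gap, just an ellipsis.
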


As a consequence of the above lemma and the fundamental inequality
(\ref{fiq}), we have:
\begin{corollary}[Abhyankar Inequality]              \label{fingentb}
Let $(F|K,v)$ be an extension of valued fields of finite transcendence
degree. Then the Abhyankar Inequality (\ref{wtdgeq}) holds. If in
addition $F|K$ is a function field and if equality holds in
(\ref{wtdgeq}), then the extensions $vF| vK$ and $Fv|Kv$ are
finitely generated.
\end{corollary}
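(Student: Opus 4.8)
The plan is to prove the Abhyankar Inequality first, then derive the finite generation statement. For the inequality, I would choose elements $x_1,\dots,x_r\in F$ whose values $vx_i$ form a maximal system of elements of $vF$ that are rationally independent modulo $vK$, so $r=\rr vF/vK$, and elements $y_1,\dots,y_s\in F$ whose residues $y_jv$ form a transcendence basis of $Fv|Kv$, so $s=\trdeg Fv|Kv$. By Lemma~\ref{K(T)} the elements $x_1,\dots,x_r,y_1,\dots,y_s$ are algebraically independent over $K$, hence $\trdeg F|K \geq r+s = \trdeg Fv|Kv + \rr vF/vK$, which is \eqref{wtdgeq}.

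For the second assertion, assume $F|K$ is a function field and that equality holds, so $T:=\{x_1,\dots,x_r,y_1,\dots,y_s\}$ is a transcendence basis of $F|K$ and $F|K(T)$ is finite. Set $K':=K(T)$ with the valuation induced by $v$. By Lemma~\ref{K(T)} we have $vK' = vK\oplus\bigoplus_{i=1}^r\Z vx_i$ and $K'v = Kv(y_1v,\dots,y_sv)$; in particular $vK'/vK$ is finitely generated (free of rank $r$) and $K'v|Kv$ is finitely generated (a rational function field in $s$ variables). Since $F|K'$ is a finite extension, the fundamental inequality \eqref{fiq} shows that $(vF:vK')$ and $[Fv:K'v]$ are finite. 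Therefore $vF/vK$ is an extension of the finitely generated abelian group $vK'/vK$ by the finite group $vF/vK'$, hence finitely generated; and $Fv|Kv$ is a finite extension of the finitely generated field extension $K'v|Kv$, hence finitely generated. This gives the claim.

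The only mildly delicate point is the bookkeeping in the last step: one must be a little careful that ``$vF/vK$ finitely generated'' follows from ``$vK'/vK$ finitely generated and $(vF:vK')<\infty$'' — this is immediate because if $\gamma_1,\dots,\gamma_k\in vK'$ generate $vK'$ modulo $vK$ and $\delta_1,\dots,\delta_m\in vF$ represent the finitely many cosets of $vK'$ in $vF$, then $\gamma_1,\dots,\gamma_k,\delta_1,\dots,\delta_m$ generate $vF$ modulo $vK$. I expect no real obstacle here; the argument is essentially a direct assembly of Lemma~\ref{K(T)} (for the inequality and for the explicit description of $vK(T)$ and $K(T)v$) with the fundamental inequality \eqref{fiq} (for finiteness of $(vF:vK(T))$ and $[Fv:K(T)v]$ once equality in Abhyankar forces $F|K(T)$ to be finite).
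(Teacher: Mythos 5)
Your proof is correct and follows exactly the route the paper intends: the corollary is stated there as ``a consequence of the above lemma and the fundamental inequality (\ref{fiq})'' with no further details, and your argument --- Lemma~\ref{K(T)} for the algebraic independence giving the inequality, then finiteness of $F|K(T)$ plus the fundamental inequality to bound $(vF:vK(T))$ and $[Fv:K(T)v]$ and assemble finite generation --- is precisely that intended derivation. No gaps; the bookkeeping step you flag is handled correctly.
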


The following is Lemma 2.8 of [K4]:

\begin{lemma}                          \label{trdc}
Let $(F|K,v)$ be a valued function field without transcendence defect
and $v=w\circ\overline{w}$, then $(F|K,w)$ and $(Fw|Kw,\overline{w})$
are valued function fields without transcendence defect.
\end{lemma}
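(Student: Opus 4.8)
The plan is to obtain the assertion by adding two instances of the Abhyankar Inequality (Corollary~\ref{fingentb}) and exploiting that, by hypothesis, it is an equality for $v$. First I would record how the composition $v=w\circ\overline{w}$ decomposes both the value group extension and the residue field extension. Let $H$ be the convex subgroup of $vF$ giving rise to the coarsening $w$, so that $wF\cong vF/H$ and $\overline{w}(Fw)\cong H$. A short computation with the corresponding valuation rings shows that the convex subgroup of $vK$ attached to the restriction of $w$ is $H\cap vK$ and that, under this identification, $H\cap vK$ corresponds to $\overline{w}(Kw)$; hence $wK\cong vK/(H\cap vK)$ embeds into $wF$, and $vF/vK$ has the subgroup $(vK+H)/vK\cong H/(H\cap vK)\cong\overline{w}(Fw)/\overline{w}(Kw)$ with quotient isomorphic to $wF/wK$. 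Since tensoring with $\Q$ is exact, the resulting short exact sequence
\[
0\longrightarrow\overline{w}(Fw)/\overline{w}(Kw)\longrightarrow vF/vK\longrightarrow wF/wK\longrightarrow 0
\]
yields $\rr vF/vK=\rr wF/wK+\rr\bigl(\overline{w}(Fw)/\overline{w}(Kw)\bigr)$. On the residue side, the residue field of $v$ is the residue field of $\overline{w}$ on $Fw$, compatibly over $K$, so $Fv=(Fw)\overline{w}$, $Kv=(Kw)\overline{w}$ and $\trdeg Fv|Kv=\trdeg (Fw)\overline{w}\,|\,(Kw)\overline{w}$.

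Next I would apply Corollary~\ref{fingentb} twice. Since $F|K$ is a function field we have $\trdeg F|K<\infty$, and the Abhyankar Inequality for $(F|K,w)$ reads $\trdeg F|K\geq\trdeg Fw|Kw+\rr wF/wK$; in particular $\trdeg Fw|Kw<\infty$, so the Abhyankar Inequality for $(Fw|Kw,\overline{w})$ reads $\trdeg Fw|Kw\geq\trdeg Fv|Kv+\rr\bigl(\overline{w}(Fw)/\overline{w}(Kw)\bigr)$. Adding these two inequalities, cancelling the common term $\trdeg Fw|Kw$, and inserting the two identities from the first step gives $\trdeg F|K\geq\trdeg Fv|Kv+\rr vF/vK$. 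By hypothesis $(F|K,v)$ is without transcendence defect, so this last inequality is in fact an equality; therefore both of the Abhyankar Inequalities just used must be equalities as well, i.e.\ $(F|K,w)$ and $(Fw|Kw,\overline{w})$ are without transcendence defect.

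It remains to check that both are genuine valued function fields, that is, that $Fw|Kw$ is finitely generated as a field extension ($F|K$ is so by hypothesis). This follows from the second assertion of Corollary~\ref{fingentb}: as $F|K$ is a function field and equality holds in the Abhyankar Inequality for $(F|K,w)$, the extension $Fw|Kw$ is finitely generated. I expect the only delicate point to be the bookkeeping in the first step --- the identification of the convex subgroup $H\cap vK$ with $\overline{w}(Kw)$, the short exact sequence relating $vF/vK$, $wF/wK$ and $\overline{w}(Fw)/\overline{w}(Kw)$, and the matching identification of residue fields; once these are in place, the inequality-adding argument is immediate.
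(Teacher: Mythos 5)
The paper does not actually prove this lemma: it is quoted verbatim as Lemma 2.8 of [K4], so there is no in-paper argument to compare against. Your proof is correct and self-contained, and it is essentially the standard argument for this fact. The bookkeeping in your first step is right: the convex subgroup of $vK$ attached to $w|_K$ is indeed $H\cap vK$, the sequence $0\to \overline{w}(Fw)/\overline{w}(Kw)\to vF/vK\to wF/wK\to 0$ is exact, and since $-\otimes_{\Z}\Q$ is exact the rational rank is additive along it; on the residue side $Fv=(Fw)\overline{w}$ and $Kv=(Kw)\overline{w}$ hold by the definition of composition. Adding the two instances of Corollary~\ref{fingentb} (after noting $\trdeg Fw|Kw\leq\trdeg F|K<\infty$ so that the cancellation is legitimate) and using the hypothesis of equality for $v$ forces equality in both, and the second assertion of Corollary~\ref{fingentb} applied to $(F|K,w)$ gives that $Fw|Kw$ is finitely generated, so $(Fw|Kw,\overline{w})$ is genuinely a valued function field. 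No gaps.
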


A transcendence basis $T$ of an extension $(L|K, v)$ is called
\textbf{valuation transcendence basis}, if for every choice of finitely
many distinct elements $t_1,\ldots,t_n\in T$, the value of every
polynomial $f$ in $K[t_1,\ldots,t_n]$ is equal to the value of a summand
of $f$ of minimal value, i.e.,
\begin{equation}                         \label{eqVTB}
v(\sum_{\underline\nu}c_{\underline\nu}t_1^{\nu_1}\cdots
t_n^{\nu_n})=\min_{\underline\nu}v(c_{\underline\nu}t_1^{\nu_1}\cdots
t_n^{\nu_n}).
\end{equation}
By Lemma~\ref{K(T)}, every standard valuation transcendence basis is a
valuation transcendence basis.

\begin{lemma}
Let $(L|K, v)$ be an extension of valued fields of finite transcendence
degree. Then the following assertions are equivalent:
\begin{enumerate}
\item $(L|K,v)$ is an extension without transcendence defect.
\item $(L|K,v)$ admits a standard valuation transcendence basis,
\item $(L|K,v)$ admits a valuation transcendence basis.
\end{enumerate}
\end{lemma}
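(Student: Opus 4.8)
The plan is to prove $(1)\Rightarrow(2)\Rightarrow(3)\Rightarrow(1)$. The first two implications are essentially already contained in the discussion preceding this lemma. If $(L|K,v)$ is without transcendence defect, choose $x_i\in L$ whose values form a maximal set of elements of $vL$ rationally independent modulo $vK$ and $y_j\in L$ whose residues form a transcendence basis of $Lv|Kv$; by Lemma~\ref{K(T)} these elements are algebraically independent over $K$, by equality in the Abhyankar Inequality their number equals $\trdeg L|K$, so they form a transcendence basis, and the value/residue formulas in Lemma~\ref{K(T)} show it is a standard valuation transcendence basis. That $(2)\Rightarrow(3)$ is the remark (from Lemma~\ref{K(T)}) that every standard valuation transcendence basis is a valuation transcendence basis. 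So the substance is $(3)\Rightarrow(1)$.

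For $(3)\Rightarrow(1)$, let $T=\{t_1,\dots,t_n\}$ be a valuation transcendence basis, so $n=\trdeg L|K$. Since $L|K(T)$ is algebraic, $vL/vK(T)$ is torsion and $Lv|K(T)v$ is algebraic, so $\rr vL/vK=\rr vK(T)/vK$ and $\trdeg Lv|Kv=\trdeg K(T)v|Kv$; it therefore suffices to prove $\trdeg K(T)v|Kv+\rr vK(T)/vK\ge n$, because the Abhyankar Inequality (Corollary~\ref{fingentb}) applied to $K(T)|K$ then forces equality there, and this equality transfers to $L|K$. From the valuation transcendence basis property one gets $vK(T)=vK+\sum_i\Z vt_i$; let $r$ be the largest number of the elements $vt_i$ that are rationally independent modulo $vK$ and renumber so that $vt_1,\dots,vt_r$ is such a set, so $\rr vK(T)/vK\ge r$. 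For each $j\in\{r+1,\dots,n\}$, maximality of $r$ yields integers $m_j\ge1$ and $c_{ji}$ with $m_jvt_j-\sum_{i\le r}c_{ji}vt_i=va_j$ for a suitable $a_j\in K^\times$; set $u_j:=t_j^{m_j}/(a_j\prod_{i\le r}t_i^{c_{ji}})$, which has value $0$, and let $\xi_j:=u_jv\in K(T)v$.

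The heart of the argument is to show that $\xi_{r+1},\dots,\xi_n$ are algebraically independent over $Kv$; this gives $\trdeg K(T)v|Kv\ge n-r$ and finishes the proof. I would argue by contradiction: a nontrivial algebraic relation over $Kv$ lifts to a polynomial $\tilde P\in\mathcal{O}_K[X_{r+1},\dots,X_n]$ all of whose coefficients have value $0$, with $v\tilde P(u_{r+1},\dots,u_n)>0$. Multiplying $\tilde P(u_{r+1},\dots,u_n)$ by a suitable monomial $\prod_{i\le r}t_i^{D_i}$ clears the denominators coming from the $u_j$ and produces a genuine polynomial $Q\in K[t_1,\dots,t_n]$. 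Two observations then yield a contradiction: distinct exponent vectors of $\tilde P$ produce distinct monomials of $Q$ (visible from the exponents of $t_{r+1},\dots,t_n$, which are the $m_j\nu_j$), so the valuation transcendence basis property~(\ref{eqVTB}) applies to $Q$; and substituting $m_jvt_j-\sum_{i\le r}c_{ji}vt_i=va_j$ shows that \emph{every} monomial of $Q$ has value exactly $v(\prod_{i\le r}t_i^{D_i})$, since the coefficient values are $0$. Hence $vQ=v(\prod_{i\le r}t_i^{D_i})$, so $v\tilde P(u_{r+1},\dots,u_n)=0$, contradicting the choice of $\tilde P$.

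I expect the last computation to be the only real obstacle: the $u_j$ must be defined so precisely that all the $t_i$-value contributions cancel uniformly across every monomial of $Q$, leaving only the (vanishing) coefficient values — and it is exactly here that the hypothesis that $T$ is a \emph{valuation} transcendence basis, i.e.\ that no cancellation occurs among monomials of equal value, is used essentially. The degenerate cases $r=0$ (then $\xi_1,\dots,\xi_n$ are constructed, with empty denominators) and $r=n$ (no $\xi_j$ at all) are covered by the same formulas read with empty products.
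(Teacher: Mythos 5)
Your proposal is correct and follows essentially the same route as the paper: the implications $(1)\Rightarrow(2)\Rightarrow(3)$ are dispatched by the Introduction and the remark before the lemma, and for $(3)\Rightarrow(1)$ both arguments build the value-zero monomials $u_j$ (the paper's $t'_j$) from a maximal rationally independent subfamily and show that a nontrivial algebraic relation among their residues would, after clearing denominators, violate property~(\ref{eqVTB}). The only difference is organizational (you prove independence of the residues directly, the paper argues by contradiction from the transcendence defect), and your explicit check that distinct exponent vectors survive the substitution is a detail the paper leaves implicit.
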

\begin{proof}
1.$\Rightarrow$ 2. was shown in the Introduction.
\sn
2.$\Rightarrow$ 3. follows from our remark preceding the lemma.
\sn
3.$\Rightarrow$ 1.: Let $T=\{t_1,\ldots,t_n\}$ be a valuation
transcendence basis of $(L|K,v)$. Hence $n= \trdeg L|K$. We can assume
that the numbering is such that for some $r\geq0$, the values
$vt_1,\ldots, vt_r$ are rationally independent over $vK$ and the values
of every $r+1$ elements in $T$ are rationally dependent over $vK$. That
is, for every $j$ such that $0<j\leq s := n-r$, there are integers
$\nu_j > 0$ and $\nu_{ij}, \ 1\leq i\leq r$, and a constant $c_j\in K$
such that the element
\[
t'_j := c_jt_{r+j}^{\nu_j} \prod_{i=1}^{r}t_i^{\nu_{ij}}
\]
has value 0. Observe that $r\leq\rr vL/vK$ and that $r+s= \trdeg L|K$.
Now assume that $(L|K, v)$ has nontrivial transcendence defect. Then
\[
s = \trdeg L|K- r\geq \trdeg L|K - \rr vL|vK > \trdeg Lv|Kv.
\]
This yields that the residues $t'_1v,\ldots,t'_sv$ are not
$Kv$-algebraically independent. Hence, there is a nontrivial polynomial
$g(X_1,\ldots,X_s)\in{\mathcal{O}}_K[X_1,\ldots,X_s]$ such that
$g(t'_1,\ldots,t'_s)v=0$. Hence $vg(t'_1,\ldots,t'_s)> 0$. After
multiplying with sufficiently high powers of every element $t_i , 1\leq
i\leq r$, we obtain a polynomial $f$ in $t_1,\ldots, t_n$ which violates
(\ref{eqVTB}). But this contradicts our assumption that $T$ be a
valuation transcendence basis. Consequently, $(L|K, v)$ can not
have a nontrivial transcendence defect.
\end{proof}

%
%
\subsection{The defect of valued function fields}  \label{secDefFF}
Let $(F|K,v)$ be a subhenselian function field  without transcendence
defect. In the following we will show that the defect $d(F|K,v)$,
defined in (\ref{Tdef}), is finite and equal to the henselian defect
$d(F|K(T),v)$ for every standard valuation transcendence basis $T$
of $(F|K,v)$.

\begin{lemma}                               \label{lpr}
Take an extension $(K(T)|K,v)$, where $T=\{x_i,y_j \mid
i\in I, j\in J\}$ satisfies (\ref{valindep}). Let $v_1,\ldots,v_g$
be the extensions of $v$ from $K(T)$ to $L(T)$. Then $v_1,\ldots,v_g$
are uniquely determined by their restrictions to $L$, and these
restrictions are precisely the extensions of $v$ from $K$ to $L$.
Moreover, for $1\leq i\leq g$,
\begin{eqnarray}
d(L(T)|K(T),v_i) & = & d(L|K,v_i)\,,    \label{d_hK(T)}\\
e(L(T)|K(T),v_i) & = & e(L|K,v_i)\,,    \label{eK(T)}\\
f(L(T)|K(T),v_i) & = & f(L|K,v_i)\,,    \label{fK(T)}\\
v_i L(T) = v_iL + vK(T)\;\;&\mbox{and}&\;\;
L(T) v_i = Lv_i\,.\,K(T)v\;.           \label{v+v,o.o}
\end{eqnarray}
\end{lemma}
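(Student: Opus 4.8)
The plan is to first establish the structural statement about the extensions $v_1,\dots,v_g$ and then deduce the five displayed equalities from it. The key observation is that $(L(T)|L,v)$ is an extension of the type covered by Lemma~\ref{K(T)}: since the values $vx_i$ are rationally independent over $vK$ (hence over $vL$, as $L|K$ is finite and therefore $vL/vK$ is torsion), and the residues $y_jv$ are algebraically independent over $Kv$ (hence over $Lv$, again because $Lv|Kv$ is algebraic), the set $T$ satisfies (\ref{valindep}) over $L$ as well. By the last assertion of Lemma~\ref{K(T)}, the valuation on $L(T)$ extending a given extension of $v$ on $L$ is \emph{uniquely determined} by its restriction to $L$ together with the prescribed values $vx_i$ and residues $y_jv$. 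Conversely, each of the $g$ extensions of $v$ from $K$ to $L$ extends (via (\ref{value}) of Lemma~\ref{K(T)}) to a valuation on $L(T)$ restricting to $v$ on $K(T)$. This gives a bijection between $\{$extensions of $v$ from $K(T)$ to $L(T)\}$ and $\{$extensions of $v$ from $K$ to $L\}$, proving the first two sentences; in particular the number of extensions is the same, call it $g$, and we fix the labelling so that $v_i|_L$ is the $i$-th extension of $v$ from $K$ to $L$.

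Next I would read off (\ref{v+v,o.o}). Applying the ``value group / residue field'' part of Lemma~\ref{K(T)} once with ground field $L$ (and valuation $v_i$) and once with ground field $K$ gives
\[
v_iL(T) \;=\; v_iL \oplus \bigoplus_{i\in I}\Z vx_i, \qquad
v K(T) \;=\; vK \oplus \bigoplus_{i\in I}\Z vx_i,
\]
and similarly $L(T)v_i = Lv_i(y_jv\mid j\in J)$, $K(T)v = Kv(y_jv\mid j\in J)$. Since $v_iL \supseteq vL \supseteq vK$ and $Lv_i\supseteq Lv\supseteq Kv$, intersecting appropriately yields $v_iL(T) = v_iL + vK(T)$ and $L(T)v_i = Lv_i\cdot K(T)v$, which is (\ref{v+v,o.o}).

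From (\ref{v+v,o.o}) the ramification and inertia degrees (\ref{eK(T)}) and (\ref{fK(T)}) follow by a purely group-theoretic / field-theoretic index computation: $(v_iL(T):vK(T)) = (v_iL + vK(T) : vK(T)) = (v_iL : v_iL\cap vK(T))$, and because the $\Z vx_i$ summand is split off freely and lies in $vK(T)$, one gets $v_iL\cap vK(T) = vL$ — wait, more carefully, $v_iL \cap vK(T) = v_iL \cap (vK \oplus \bigoplus\Z vx_i)$, and since $v_iL \subseteq vL \oplus \bigoplus \Z vx_i$ with $v_iL$ a group between $vK$-related data, the intersection is $vK$... I should be slightly careful here: the clean way is to observe $v_iL(T)/vK(T) \cong v_iL/vL$ is false in general; rather, the direct-sum decompositions give $v_iL(T)/vK(T)\cong v_iL/vK$ directly, whence $e(L(T)|K(T),v_i) = (v_iL:vK) = e(L|K,v_i)$. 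Likewise $L(T)v_i = Lv_i\cdot Kv(y_jv) $ is a free composite (the $y_jv$ being transcendental over $Lv_i$ since they are over $Kv$ and $Lv_i|Kv$ is algebraic), so $[L(T)v_i:K(T)v] = [Lv_i:Kv] = f(L|K,v_i)$. Finally (\ref{d_hK(T)}) follows by combining these with the degree identity $[L(T):K(T)] = [L:K]$ — which holds because $T$ is a transcendence basis of both $L(T)|L$ and $K(T)|K$, or equivalently because $L$ and $K(T)$ are linearly disjoint over $K$ — and passing to henselizations (an immediate operation), so that $d(L(T)|K(T),v_i) = [L(T)^h:K(T)^h]/(e_i f_i) = [L^h:K^h]/(e(L|K,v_i)f(L|K,v_i)) = d(L|K,v_i)$. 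The main obstacle I anticipate is the bookkeeping around the degree identity $[L(T):K(T)]=[L:K]$ together with $[L(T)^h:K(T)^h] = \sum_i [L^{h(v_i)}:K^{h(v_i)}]$ via (\ref{localdegr}) — i.e., making sure the local degrees at each of the $g$ places match up so that the \emph{individual} defects $d(L(T)|K(T),v_i)$ (not merely their weighted sum) equal $d(L|K,v_i)$; this is exactly where one must use that henselization is immediate and that the bijection of places is compatible with the henselian local degrees, rather than only with the global degree.
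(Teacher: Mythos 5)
Your handling of the first two assertions, of (\ref{v+v,o.o}), and of the ramification and inertia identities (\ref{eK(T)}) and (\ref{fK(T)}) follows the paper's route essentially verbatim: everything is read off from the direct-sum decomposition $v_iL(T)=v_iL\oplus\bigoplus_{i\in I}\Z vx_i$, the description $L(T)v_i=Lv_i(y_jv\mid j\in J)$, and the linear disjointness of $Kv(y_jv\mid j\in J)|Kv$ from the algebraic extension $Lv_i|Kv$, all supplied by Lemma~\ref{K(T)}. That part is fine.

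For (\ref{d_hK(T)}), however, your proposal stops exactly where the real work begins. You write $d(L(T)|K(T),v_i)=[L(T)^{h(v_i)}:K(T)^{h(v_i)}]/(e_if_i)$ and then replace the numerator by $[L^{h(v_i)}:K^{h(v_i)}]$, and your closing sentence concedes that matching these local degrees place by place is ``the main obstacle'' --- but you never overcome it. The global identity $[L(T):K(T)]=[L:K]$ together with (\ref{localdegr}) applied to both extensions only gives $\sum_i[L(T)^{h(v_i)}:K(T)^{h(v_i)}]=\sum_i[L^{h(v_i)}:K^{h(v_i)}]$, and equality of sums does not by itself give equality of the individual summands. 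The missing idea, which is how the paper closes the argument, is the termwise one-sided inequality
\[
[L(T)^{h(v_i)}:K(T)^{h(v_i)}]\;=\;[L^{h(v_i)}.K(T)^{h(v_i)}:K^{h(v_i)}.K(T)^{h(v_i)}]\;\leq\;[L^{h(v_i)}:K^{h(v_i)}]\,,
\]
which follows from Lemma~\ref{Lh=L.Kh}: the henselization of $L(T)$ at $v_i$ is the compositum $L.K(T)^{h(v_i)}=L^{h(v_i)}.K(T)^{h(v_i)}$, and lifting a finite extension to a larger base can only decrease the degree. Once this inequality is available at every place, equality of the sums forces equality at each place, and (\ref{d_hK(T)}) follows from the definition of the henselian defect together with (\ref{eK(T)}) and (\ref{fK(T)}). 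Note also that your appeal to ``henselization is immediate'' does not help here: immediacy controls value groups and residue fields, not field degrees, which is precisely the quantity at issue.
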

\begin{proof}
The first two assertions follow from Lemma~\ref{K(T)}, which also shows
that $vK(T)= vK\,\oplus\,\bigoplus_{i\in I}\Z vx_i$ and $v_iL(T)=
v_i L\,\oplus\,\bigoplus_{i\in I}\Z vx_i$. Hence, $v_iL(T)/
vK(T)\isom v_iL/vK$, which proves equation (\ref{eK(T)}).
Again by Lemma~\ref{K(T)}, $K(T)v=Kv(y_j v\mid j\in J)$ and
$L(T)v_i=Lv_i(y_j v\mid j\in J)$. Since the elements $y_j v$
are algebraically independent over $Kv$ and $Lv_i|Kv$ is algebraic,
$Kv(y_j v\mid j\in J)|Kv$ is linearly disjoint from $Lv_i|Kv$, which
yields (\ref{fK(T)}). Also, (\ref{v+v,o.o}) follows immediately
from the above described form of the value groups and residue fields.

Since the elements of $T$ are algebraically independent over $K$, the
extension $K(T)|K$ is linearly disjoint from $\tilde{K}|K$ and thus,
$[L(T): K(T)]=[L:K]$. In view of Lemma~\ref{Lh=L.Kh}, we have that

\begin{equation}                            \label{LThLh}
[L(T)^{h(v_i)}:K(T)^{h(v_i)}]=[L^{h(v_i)}.K(T)^{h(v_i)}: K^{h(v_i)}.
K(T)^{h(v_i)}]\leq [L^{h(v_i)}:K^{h(v_i)}]\;.
\end{equation}
But from (\ref{localdegr}) we obtain that
\[
\sum_{1\leq i\leq g}^{} [L(T)^{h(v_i)}:K(T)^{h(v_i)}] =
[L(T): K(T)]=[L:K]= \sum_{1\leq i\leq g}^{}
[L^{h(v_i)}:K^{h(v_i)}]
\]
which shows that equality must hold in (\ref{LThLh}). Now
(\ref{d_hK(T)}) follows from the definition of the henselian defect.
\end{proof}

To facilitate notation, we will from now on assume that all valued field
extensions of $(K,v)$ are contained in a large algebraically closed
valued field extension of $(K,v)$ and their henselizations are taken
within this extension. This enables us to suppress the mentioning of the
valuation.

\begin{lemma}                                   \label{d3}
Let $F|K$ be a subhenselian function field without transcendence defect.
Then for every standard valuation transcendence basis $T$ of $F|K$
there exists a finite extension $K_T$ of $K$ such that for every
algebraic extension $L$ of $K$ containing $K_T$, the following
holds:
\begin{enumerate}
\item the extension $L.F|L(T)$ is defectless
\item if $L|K$ is h-finite, then $d (L.F|K(T)) =
d(L(T)|K(T)) = d(L|K)$.
\end{enumerate}
\end{lemma}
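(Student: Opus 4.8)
The plan is to fix a standard valuation transcendence basis $T=\{x_i,y_j\mid i\in I,j\in J\}$ of $F|K$ and to reduce everything to the defectlessness of a single finite extension over a suitably enlarged, but still finite, extension of $K$. Recall that since $F$ is a subhenselian function field, $F^h$ is the henselization of some honest function field $F_0|K$; after enlarging $F_0$ we may assume $T\subseteq F_0$ and $F_0|K(T)$ is finite, so that $d(F|K(T))=d(F_0^h|K(T)^h)=[F_0^h:K(T)^h]/\bigl((vF_0:vK(T))[F_0v:K(T)v]\bigr)$ is a finite power of $p$. I would first treat the base extension by $\tilde K$: the extension $\tilde K(T)|K(T)$ is immediate-free in the sense that $K(T)|K$ is linearly disjoint from $\tilde K|K$ (the elements of $T$ being algebraically independent over $K$), and $\tilde K.F_0|\tilde K(T)$ is a finite extension of the function field $\tilde K(T)|\tilde K$ over the \emph{algebraically closed} (hence defectless, by Corollary \ref{corKv=0} if $\chara \tilde Kv=0$ and by the Generalized Stability Theorem \ref{thmGST} applied to $\tilde K$ — which is trivially a defectless field — in general) ground field $\tilde K$. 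Thus $\tilde K.F|\tilde K(T)$ is defectless.

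Next I would descend this defectlessness to a finite level. The extension $\tilde K.F_0|\tilde K(T)$ is generated by finitely many elements, and its being defectless is witnessed by finitely much data: a factorization realizing $[\tilde K.F_0:\tilde K(T)]=(v(\tilde K.F_0):v\tilde K(T))[(\tilde K.F_0)v:\tilde K(T)v]$, i.e.\ generators of the value-group quotient and residue-field extension together with the minimal polynomials of algebra generators of the henselization. All of these live in $\tilde K.F_0 = \bigcup_{K'} K'.F_0$ where $K'$ ranges over finite subextensions of $\tilde K|K$; hence there is a single finite $K_T/K$ such that all this data is already defined over $K_T(T)$, and therefore $K_T.F|K_T(T)$ is defectless. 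This is exactly the place where Lemma \ref{eeu} (with $\Omega = \tilde K.F$, the role of $F$ there played by $F_0$, and $L = \tilde K$) does the bookkeeping: it produces a finitely generated, hence finite, subextension $L_0|K$ of $\tilde K|K$ beyond which the henselized degree, ramification index and inertia degree of $(L'.F|L')$ all stabilize at their values over $\tilde K$; taking $K_T\supseteq L_0$ finite gives assertion 1 for $L=K_T$, and monotonicity in Lemma \ref{eeu} then gives it for every algebraic $L\supseteq K_T$ (defectlessness can only persist, since $d(L.F|L(T))\le d(K_T.F|K_T(T))=1$ once degrees have stabilized — here one uses that over $L\supseteq K_T$ the extension $L.F|L(T)$ is again a finite extension of a rational function field, and that $d(L(T)|K_T(T))=1$ by Lemma \ref{lpr}).

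For assertion 2, assume in addition $L|K$ is h-finite. The idea is to run the multiplicativity of the defect around the square with corners $K(T)$, $L(T)$, $L.F$ and to transport the known identity $d(L(T)|K(T))=d(L|K)$ from Lemma \ref{lpr} (equation (\ref{d_hK(T)}), noting that by that lemma the extensions of $v$ from $K(T)$ to $L(T)$ restrict bijectively to those from $K$ to $L$, so the henselian defects match up). By multiplicativity of the henselian defect (Lemma \ref{dmult}, applied after henselizing, using (\ref{localdegr}) to split over the finitely many extensions of $v$),
\[
d(L.F|K(T)) \;=\; d(L.F|L(T))\cdot d(L(T)|K(T)) \;=\; 1\cdot d(L|K)\;=\;d(L|K),
\]
where the middle factor is $1$ by assertion 1. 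Reading the same chain through the other diagonal, $d(L.F|K(T)) = d(L.F|F)\cdot d(F|K(T)) = d(L.F|F)\cdot d(F|K(T))$, but for the statement of Lemma \ref{d3} we only need the two displayed equalities $d(L.F|K(T)) = d(L(T)|K(T)) = d(L|K)$, both of which now follow. The main obstacle I anticipate is the descent step: making precise that ``defectlessness of $\tilde K.F|\tilde K(T)$ is a finite-type phenomenon'' requires the nontrivial Lemma \ref{fofdefh} (elements of a henselization of a function field come from a finitely generated subfield) packaged inside Lemma \ref{eeu}, and one must be careful that enlarging $K$ to $K_T$ does not disturb the standardness of $T$ — which is fine, since rational independence of the $vx_i$ over $vK_T$ and algebraic independence of the $y_jv$ over $K_Tv$ follow from the corresponding facts over $K$ together with $[K_T:K]<\infty$, so $T$ remains a standard valuation transcendence basis of $K_T.F|K_T$ and Lemma \ref{lpr} stays applicable.
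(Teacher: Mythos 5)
Your proposal follows the paper's proof essentially verbatim: defectlessness of $\tilde K.F|\tilde K(T)$ via the Generalized Stability Theorem over the (trivially defectless) field $\tilde K$, descent to a finite $K_T$ via Lemma~\ref{eeu}, and assertion~2 deduced from assertion~1 by multiplicativity of the defect together with $d(L(T)|K(T))=d(L|K)$ from Lemma~\ref{lpr}. One small correction: Lemma~\ref{eeu} must be instantiated with base field $K(T)$ and $L=\tilde K(T)$ (so that the hypothesis that $L.F|L$ be h-finite is satisfied), not with base $K$ and $L=\tilde K$ as your parenthetical states, since $\tilde K.F|\tilde K$ is transcendental; the paper accordingly obtains a finitely generated $L_0|K(T)$ and then chooses $K_T$ finite over $K$ with $L_0\subseteq K_T(T)$, which is exactly what your subsequent bookkeeping of $d(L.F|L(T))\le d(\tilde K.F|\tilde K(T))=1$ implicitly assumes.
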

\begin{proof}
Assume that $L|K$ is an h-finite extension such that
$L.F|L(T)$ is defectless. Then
\[d(L.F|K(T))=d(L.F|L(T))\cdot d(L(T)|K(T))
= d(L(T)|K(T)) = d(L|K)\;,\]
where the last equation holds by Lemma~\ref{lpr}.
Hence we may restrict our attention to the fulfillment of assertion 1.

%
%

\pars
The extension $\tilde{K}.F|\tilde{K}(T)$ is defectless by
Theorem~\ref{thmGST} and Lemma~\ref{dl-hdl}.
By Lemma~\ref{eeu} there exists a finitely generated subextension
$L_0|K(T)$ of $\tilde{K}(T)|K(T)$ such that
\[
d(L_1.F|L_1)\leq d(\tilde{K}.F|\tilde{K}(T))=1
\]
whenever $L_0\subset L_1\subset \tilde{K}(T)$. Let $K_T$ be a
finitely generated algebraic (and hence finite) extension of $K$ such
that $L_0\subset K_T(T)$. Then $d(L.F|L(T))\leq d(\tilde{K}.F|
\tilde{K}(T))=1$ for every algebraic extension $L$ of $K$ which
contains $K_T$.
\end{proof}

\sn
{\bf Proof of Theorem~\ref{def}:}
\n
%
Take any transcendence basis $T_0$ of $F|K$. Then also $K(T_0) |K$ is
without transcendence defect because it has the same transcendence
degree as $F|K$, $vF/vK(T_0)$ is a torsion group, and $Fv|K(T_0)v$ is
algebraic. Hence $K(T_0)$ admits a standard valuation transcendence
basis $T$ over $K$. We compute:
\[
d(F|K(T_0)) \leq d(F|K(T_0))\cdot d(K(T_0)|K(T)) = d(F|K(T))\>.
\]
This shows that
\[
d(F|K) = \sup_T\> d(F|K(T))\>,
\]
where $T$ runs over standard valuation transcendence bases only. Since
$F$ is a subhenselian function field, every $d(F|K(T))$ is a finite
number. It remains to show that for any two standard valuation
transcendence bases $T_1$ and $T_2$,
\[
d(F|K(T_1)) = d(F|K(T_2))\;.
\]
We choose finite extensions $K_{T_1}$ and $K_{T_2}$
according to Lemma~\ref{d3}. Putting $L_0 = K_{T_1}.\,
K_{T_2}$ we get by Lemma~\ref{d3}:
\[
d(L_0.F|K(T_1)) = d(L_0|K) = d(L_0.F|K(T_2))
\]
and from this we deduce
\[
d(F|K(T_1)) = \frac{d(L_0.F|K(T_1))}{d(L_0.F|F)}
= \frac{d(L_0.F|K(T_2))}{d(L_0.F|F)} = d(F|K(T_2))\;.
\]
This proves (\ref{defe}) and that the defect is independent of the
chosen standard valuation transcendence basis.

Furthermore, using Lemmas~\ref{lpr} and~\ref{d3}, for any finite
extension $L$ of $K$ containing $K_{T_1}$ we observe the following:
\begin{eqnarray*}
d(L(T_2)|K(T_2))
& = & d(L|K) \>=\> d(L.F|K(T_1))\>=\>d(L.F|F)\cdot d(F|K(T_1))\\
& = & d(L.F|F)\cdot d(F|K(T_2)) \>=\> d(L.F|K(T_2))
\end{eqnarray*}
showing that
\[
d(L.F|L(T_2))\>=\>d(L.F|K(T_2))/d(L(T_2)|K(T_2))= 1\;.
\]
Hence every algebraic extension $L$ of $K_{T_1}$ satisfies
assertion 1 and also the first part of assertion 2, because
\[
d(F|K) = d(F|K(T_1)) =
\frac{d(L.F|K(T_1))}{d(L.F|F)} = \frac{d(L|K)}{d(L.F|F)}
\]
where the last equation holds by Lemma~\ref{d3}.
The second part of assertion 2 follows from
\begin{eqnarray*}
d(N|K) & = & d(N(T_1)|K(T_1))\\
& \leq & d(N.F|K(T_1)) = d(N.F|F)\cdot d(F|K(T_1))
= d(N.F|F)\cdot d(F|K)
\end{eqnarray*}
and the fact that equality holds for the finite extension $K_{T_1}$ of $K$.
\QED

%
%
\section{Completion Defect and Defect Quotient}
This section contains our results on completion defect and defect
quotients for finite (and more generally, h-finite) extensions as well
as for valued algebraic function fields (and more generally,
subhenselian function fields).

%
%
\subsection{The case of h-finite extensions} \label{sechF}
The following observations are immediate from the definitions. We have
that
\begin{equation}  \label{dmt}
d(L|K) = d_c(L|K)\cdot d_q(L|K)\;.
\end{equation}
Thus for h-finite extensions of q-defectless fields, the completion
defect equals the ordinary defect. Every h-finite extension $L|K$
satisfies:
\[
d_c(L|K) = d_c(L^h|K^h)\;\;\;\mbox{\ \ and\ \ }\;\;\;
d_q(L|K) = d_q(L^h|K^h)\;.
\]
Hence, $K$ is a c-defectless or q-defectless field if and only if
its henselization $K^h$ is a c-defectless or q-defectless field,
respectively. If also $M|L$ is h-finite, then we have that
$[M^{hc}:K^{hc}]=[M^{hc}:L^{hc}]\cdot [L^{hc}:K^{hc}]$, and from the
multiplicativity of of ramification index and inertia degree we obtain
the following analogue of Lemma~\ref{dmult}:
\begin{equation}                            \label{multcq}
d_c(M|K) \>=\> d_c(M|L)\cdot d_c(L|K)\;\mbox{\ \ and\ \ }\;
d_q(M|K) \>=\> d_q(M|L)\cdot d_q(L|K)\;.
\end{equation}

From this multiplicativity, one derives:

\begin{lemma}                                         \label{fin}
Let $(L|K,v)$ be an h-finite extension. Then $(K,v)$ is a q-defectless
field if and only if $(L|K,v)$ is q-defectless and $(L,v)$ is a
q-defectless field. The same holds for ``c-defectless'' instead of
``q-defectless''.
\end{lemma}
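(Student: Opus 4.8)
The plan is to derive the statement formally from the multiplicativity relations (\ref{multcq}), from the henselization invariance $d_q(N|K)=d_q(N^h|K^h)$ and $d_c(N|K)=d_c(N^h|K^h)$ recorded just above, and from the fact that $d_c$ and $d_q$ always take values in $\{1,p,p^2,\dots\}$, in particular are $\geq 1$. Indeed $d_c(L|K)=d(L^{hc}|K^{hc},v)$ is the defect of a finite extension of henselian fields, since $L^{hc}=L^h.K^{hc}$ by Lemma~\ref{Lc=L.Kc} and $K^{hc}$ is henselian by Lemma~\ref{hchc=hc}; hence it is a power of $p$ by the Lemma of Ostrowski. Moreover $d_q(L|K)=[L^h:K^h]/[L^{hc}:K^{hc}]\geq 1$, because $[L^{hc}:K^{hc}]=[L^h.K^{hc}:K^{hc}]\leq[L^h:K^h]$, so $d_q(L|K)$ too is a power of $p$. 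All composita below are taken inside the fixed large algebraically closed valued field. We also use two elementary facts that follow from Lemma~\ref{Lh=L.Kh}: if $M|L$ and $L|K$ are h-finite then so is $M|K$; and if $M|K$ and $L|K$ are h-finite then $M.L|M$ and $M.L|L$ are h-finite (the relevant henselizations being composita with $K^h$, over which finiteness is preserved by base change). Finally, if $(K,v)$ is q-defectless then $d_q(N|K)=1$ for \emph{every} h-finite extension $N|K$, because $K^h$ is q-defectless, $N^h|K^h$ is finite, and $d_q(N|K)=d_q(N^h|K^h)=1$; likewise with ``c'' in place of ``q''.

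Now suppose $(K,v)$ is a q-defectless field. Since $L|K$ is h-finite, the last remark gives $d_q(L|K)=1$, so $(L|K,v)$ is q-defectless. Given any finite extension $M|L$, the extension $M|K$ is h-finite, hence $d_q(M|K)=1$, and (\ref{multcq}) yields $1=d_q(M|K)=d_q(M|L)\cdot d_q(L|K)=d_q(M|L)$. As $M|L$ was arbitrary, $(L,v)$ is a q-defectless field.

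Conversely, suppose $(L|K,v)$ is q-defectless and $(L,v)$ is a q-defectless field. Let $M|K$ be any finite extension and set $N:=M.L$. Then $N|L$ is h-finite, so $d_q(N|L)=1$ because $(L,v)$ is q-defectless, and (\ref{multcq}) gives $d_q(N|K)=d_q(N|L)\cdot d_q(L|K)=1$. Also $N|M$ is h-finite, and (\ref{multcq}) gives $d_q(N|K)=d_q(N|M)\cdot d_q(M|K)$; since both factors on the right are $\geq 1$, we get $d_q(M|K)=1$. As $M|K$ was arbitrary, $(K,v)$ is a q-defectless field.

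The ``c-defectless'' case is proved verbatim with $d_c$ in place of $d_q$: one uses the second half of (\ref{multcq}) for multiplicativity of $d_c$, the henselization invariance of $d_c$, and $d_c\geq 1$ as established above. The only genuinely non-formal ingredient is the verification that the composita $M.L$ are h-finite over $M$ and over $L$ (so that (\ref{multcq}) applies to the towers $N|M|K$ and $N|L|K$ with $N=M.L$); I expect that to be the one mild obstacle, and it is supplied by Lemma~\ref{Lh=L.Kh} as indicated.
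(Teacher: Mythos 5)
Your proof is correct and follows exactly the route the paper intends: the paper offers no written proof beyond the remark "From this multiplicativity, one derives" the lemma, and your argument is precisely that derivation, using (\ref{multcq}), the invariance under henselization, the fact that $d_c$ and $d_q$ are powers of $p$ (hence $\geq 1$), and the compositum $M.L$ in the converse direction. The care you take in checking that the relevant composita are h-finite is a welcome addition rather than a deviation.
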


In passing, we make the following observation, which we will not need
further in this paper.

\begin{lemma}
If $L|K$ is a c-defectless and immediate h-finite extension, then
$L^h$ is a purely inseparable extension of $K^h$ included in its
completion $K^{hc}$.
\end{lemma}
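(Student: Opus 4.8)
The plan is to reduce everything to the henselian fields $K^h$ and $L^h$ and then invoke the last sentence of Lemma~\ref{hchc=hc}, namely that a henselian field is separable-algebraically closed in its completion. First I would unwind the hypothesis: since $(L|K,v)$ is immediate we have $(vL:vK)=[Lv:Kv]=1$, so
\[
d_c(L|K)\>=\>\frac{[L^{hc}:K^{hc}]}{(vL:vK)[Lv:Kv]}\>=\>[L^{hc}:K^{hc}]\;,
\]
and the c-defectlessness of $(L|K,v)$ says exactly that $[L^{hc}:K^{hc}]=1$, i.e.\ $L^{hc}=K^{hc}$. In particular $L^h\subseteq L^{hc}=K^{hc}$.

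Next I would observe that $L^h|K^h$ is a finite (because $L|K$ is h-finite), hence algebraic, extension which, by the previous step, is contained in the completion $K^{hc}$ of the henselian field $K^h$. By Lemma~\ref{hchc=hc}, $K^h$ is separable-algebraically closed in $K^{hc}$; therefore every element of $L^h$ that is separable-algebraic over $K^h$ already lies in $K^h$. Consequently the maximal separable subextension of $L^h|K^h$ is $K^h$ itself, i.e.\ $L^h|K^h$ is purely inseparable (and, in passing, this forces $L^h=K^h$ when $\chara K=0$). Together with $L^h\subseteq K^{hc}$ this is precisely the claimed statement.

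I do not expect any real obstacle here; the only point requiring a little care is the very first reduction, namely that the completion $L^{hc}$ genuinely contains $K^{hc}$, so that the degree $[L^{hc}:K^{hc}]$ is defined and the quotient in $d_c(L|K)$ makes sense. This uses that $vK^h$ is cofinal in $vL^h$ — which holds because $(L^h|K^h,v)$ is finite — together with Lemma~\ref{Lc=L.Kc} and Lemma~\ref{hchc=hc} (the same facts already cited in the Introduction to see that $d_c$ is well defined). Everything else is a direct application of the separable-algebraic closedness of a henselian field in its completion.
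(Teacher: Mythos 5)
Your proof is correct and follows essentially the same route as the paper's: reduce $d_c(L|K)$ to $[L^{hc}:K^{hc}]$ using immediacy, conclude $L^h\subseteq L^{hc}=K^{hc}$ from c-defectlessness, and then invoke the separable-algebraic closedness of the henselian field $K^h$ in its completion (Lemma~\ref{hchc=hc}) to get pure inseparability. The extra care you take about $L^{hc}\supseteq K^{hc}$ is exactly the well-definedness point the paper settles via Lemmas~\ref{Lc=L.Kc} and~\ref{hchc=hc}.
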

\begin{proof}
If $L|K$ is an immediate h-finite extension, then $(vL:vK)[Lv:Kv]=1$ and
therefore, $d_c(L|K)=[L^{hc}:K^{hc}]$. If in addition $L|K$ is
c-defectless, then $K^{hc}=L^{hc}=L^h.K^{hc}$, which implies that
$L^h\subseteq K^{hc}$. Since $K^h$ is separable-algebraically closed in
its completion $K^{hc}$ by Lemma~\ref{hchc=hc}, the extension $L^h|K^h$
must be purely inseparable.
\end{proof}

\pars
The completion defect $d_c(L|K)$ and the defect quotient $d_q(L|K)$ are
integers dividing $d(L|K)$ and hence are powers of $p$. To see this, we
use that $[L^{hc} : K^{hc}] = [L.K^{hc} : K^{hc}] \leq [L.K^h : K^h] =
[L^h : K^h]\,$. This gives:
\begin{equation}                               \label{dhc<=dh}
d_c(L|K)\;=\;\frac{[L^{hc}:K^{hc}]}{(vL:vK)\cdot [Lv:Kv]}
\;\leq\;\frac{[L^h:K^h]}{(vL:vK)\cdot [Lv:Kv]}\;=\;d(L|K)\;.
\end{equation}
Since on the other hand, $d_c(L|K)$ is the defect of the extension
$L^{hc}|K^{hc}$, it is a power of $p$ and consequently a divisor of
$d(L|K)$. This yields that also $d_q(L|K) = d(L|K)d_c(L|K)^{-1}$ is an
integer dividing $d(L|K)$ and a power of $p$.

\pars
In (\ref{dhc<=dh}), equality holds if and only if
\begin{equation}                                  \label{ldis}
[L^{hc} : K^{hc}] = [L^h : K^h]\;,
\end{equation}
which in view of $L^{hc}=L^h.K^{hc}$ means that $L^h$ is linearly
disjoint from $K^{hc}$ over $K^h$. Since the henselian field $K^h$ is
relatively separable-algebraically closed in its completion,
equation~(\ref{ldis}) holds whenever $L^h|K^h$ is separable-algebraic;
hence it holds for every h-finite separable extension $L|K$. This
proves:

\begin{lemma}                             \label{hfin}
Every h-finite separable extension is q-defectless. In general,
an h-finite extension $L|K$ is q-defectless if and only if
Equation~(\ref{ldis}) holds.
\end{lemma}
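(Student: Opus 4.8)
The plan is to assemble the pieces already prepared in the paragraph preceding the lemma; almost no new work is needed. First I would recall that $d_q(L|K)=d_q(L^h|K^h)=[L^h:K^h]/[L^{hc}:K^{hc}]$, and that by the chain of inequalities in~(\ref{dhc<=dh}) this quotient is always $\geq 1$ and equals $1$ exactly when equality holds in~(\ref{dhc<=dh}), i.e.\ exactly when~(\ref{ldis}) holds. Since $L^{hc}=L^h.K^{hc}$, the equality $[L^{hc}:K^{hc}]=[L^h:K^h]$ says precisely that $L^h$ is linearly disjoint from $K^{hc}$ over $K^h$. This already yields the ``in general'' assertion: $L|K$ is q-defectless if and only if~(\ref{ldis}) holds.

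For the first assertion I would reduce to the henselian case. Passing from $(L|K,v)$ to $(L^h|K^h,v)$ changes neither $d_q$ nor the validity of~(\ref{ldis}); moreover, if the h-finite extension $L|K$ is separable, then $L^h|K^h$ is again a finite separable extension, because $L^h=L.K^h$ by Lemma~\ref{Lh=L.Kh} and a compositum of two separable-algebraic extensions is separable-algebraic. Thus I may assume that $K$ is henselian and $L|K$ is finite separable. By Lemma~\ref{hchc=hc}, $K$ is separable-algebraically closed in its completion $K^c$; hence, exactly as in the proof of Lemma~\ref{csdd}, the separable-algebraic extension $L|K$ is linearly disjoint from $K^c$ over $K$, so that $[L^c:K^c]=[L.K^c:K^c]=[L:K]$, where $L^c=L.K^c$ by Lemma~\ref{Lc=L.Kc}. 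Therefore~(\ref{ldis}) holds and $d_q(L|K)=1$.

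The only non-formal input is the classical fact that a field which is separable-algebraically closed in an extension is linearly disjoint over itself from every separable-algebraic extension; since this has already been used in the proof of Lemma~\ref{csdd}, I anticipate no genuine obstacle. The one point that deserves a moment's care is the reduction to the henselian case, namely checking that ``h-finite separable'' really does pass to the henselization; this holds because henselizations are separable-algebraic and separability of field extensions is preserved under forming composita.
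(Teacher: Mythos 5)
Your proposal is correct and follows essentially the same route as the paper: the paper likewise derives the general criterion from equality in~(\ref{dhc<=dh}) combined with $L^{hc}=L^h.K^{hc}$, and settles the separable case by observing that $K^h$ is separable-algebraically closed in $K^{hc}$ (Lemma~\ref{hchc=hc}), hence linearly disjoint from the separable extension $L^h$. Your explicit reduction to the henselian case and the check that $L^h|K^h$ remains separable are points the paper leaves implicit, but they match its intent exactly.
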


We deduce:
\sn
{\bf Proof of Theorem~\ref{ims}:}
\n
Let $K$ be a c-defectless field. By Lemma~\ref{hfin}, we know that
every h-finite separable extension of $K$ is q-defectless, i.e., its
completion defect equals the ordinary defect. Thus every finite
separable extension of $K$ is defectless and consequently, $K$ is a
separably defectless field.

For the converse, assume that $K$ is separably defectless. Then by
Lemma~\ref{dl-hdl}, also its henselization is separably defectless. Now
Lemma~\ref{csdd} shows that $K^{hc}$ is defectless. By virtue of the
definition of the completion defect, this proves $K$ to be c-defectless.
\QED

We will need the following theorem from [K6]:

\begin{theorem}                             \label{disMT2}
Take $z\in \tilde{K}\setminus K$ such that
\[
v(a-z)\;>\;\{v(a-c)\mid c\in K\}
\]
for some $a\in K^h$. Then $K^h$ and $K(z)$ are not linearly
disjoint over $K$, that is,
\[
[K^h(z):K^h] < [K(z):K]
\]
and in particular, $K(z)|K$ is not purely inseparable.
\end{theorem}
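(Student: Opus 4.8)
Write $f\in K[X]$ for the minimal polynomial of $z$ over $K$, so that $[K(z):K]=\deg f=:n\geq 2$, and recall from Lemma~\ref{Lh=L.Kh} that $K(z)^h=K(z).K^h=K^h(z)$. Since $[K^h(z):K^h]$ is the degree of the $K^h$-irreducible factor of $f$ having $z$ as a root, the inequality $[K^h(z):K^h]<[K(z):K]$ is equivalent to the assertion that $f$ is reducible over $K^h$; moreover, once this is known in general, the last clause follows for free, since a purely inseparable $f$ is irreducible over every separable-algebraic extension of $K$, in particular over $K^h$. So the plan is: assume $f$ is irreducible over $K^h$ and derive a contradiction with the hypothesis that some $a\in K^h$ satisfies $v(a-z)>\Lambda$, where $\Lambda:=\{v(a-c)\mid c\in K\}=\{v(z-c)\mid c\in K\}$ (the two sets coincide, since $v(a-z)>v(z-c)$ forces $v(a-c)=v(z-c)$).

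\textbf{Reducing to pseudo-Cauchy sequences.} First I would record the cheap observation that under the hypothesis $\Lambda$ has \emph{no} maximal element: if $v(z-c_0)$ were maximal, then after replacing $z,a$ by $z-c_0,a-c_0$ we may assume $vz=\max\Lambda$; if $vz\notin vK$ then $v(a-z)>vz$ gives $va=vz\notin vK=vK^h$, impossible for $a\in K^h$, while if $vz\in vK$, scaling by $t\in K$ with $vt=vz$ shows that $a/t$ and $z/t$ have the same residue, which lies in $K^hv=Kv$, so some $c\in K$ has $v(z-tc)>vz$, contradicting maximality. Hence there is a pseudo-Cauchy sequence $(c_\nu)$ in $K$ with $v(z-c_\nu)$ strictly increasing and cofinal in $\Lambda$; both $z$ and $a$ are pseudo-limits of it (for $a$, because $v(a-c_\nu)=v(z-c_\nu)$), and since $z$ is algebraic over $K$, $(c_\nu)$ is of algebraic type. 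By Kaplansky's theory it has an associated minimal polynomial $q\in K[X]$ of some minimal degree $d$, with $2\leq d\leq n$ (one checks $d\geq 2$: a degree-$1$ minimal polynomial would make $(c_\nu)$ converge in $K$, whence $\Lambda$ would have a maximum). Note also that $(c_\nu)$ has no pseudo-limit in $K$ (such a limit $c$ would give $v(z-c)\geq\Lambda$ and $v(z-c)\in\Lambda$, again a maximum). The theorem now follows from the following claim, applied to $(c_\nu)$ with pseudo-limit $z$: \emph{an algebraic-type pseudo-Cauchy sequence over $K$ with no pseudo-limit in $K$ but with a pseudo-limit in $K^h$ has the property that every algebraic pseudo-limit $y$ satisfies $[K^h(y):K^h]<[K(y):K]$.}

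\textbf{Proving the claim.} I would prove this by induction on the minimal degree $d$, treating the separable case first. Suppose toward a contradiction that the minimal polynomial $q$ of $y$ over $K^h$ still has degree $[K(y):K]=m\geq 2$, i.e.\ $q$ is irreducible over $K^h$; taking $q$ separable (reduce the inseparable case below), $q$ has no root in $K^h$, so the pseudo-limit $a\in K^h$ is not a root of $q$. Since $K^h$ is henselian, $v$ is its unique extension to $\tilde K$, so $\Gal(\tilde K|K^h)$ fixes both $v$ and $a$ and acts transitively on the roots of $q$; hence all the values $v(a-y_i)$ agree with $\delta':=v(a-y)$, and $\delta'>\Lambda$ because $a$ and $y$ are pseudo-limits. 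Thus all roots of $q$ cluster in the ball of radius $\delta'$ about $a$. I would then analyse the factorization of $q(a+X)$ over $K^h$ "at scale $\delta'$": when $\delta'\in vK$, normalize by $t\in K$ with $vt=\delta'$ and reduce $t^{-m}q(a+tX)\in\mathcal{O}_{K^h}[X]$ modulo the maximal ideal to obtain a monic degree-$m$ polynomial over $K^hv=Kv$ with nonzero constant term; if it has a nontrivial coprime factorization or a simple root in $Kv$, Hensel's Lemma lifts this to a nontrivial factorization of $q$ over $K^h$, a contradiction, so the residual polynomial is a power $\bar h^{\,k}$ of an irreducible $\bar h\in Kv[X]$. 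If $k=1$, then $K^h(y)|K^h$ is unramified, and then (after the normalization making the roots units) $v(b-y)\leq 0$ for \emph{every} $b\in K^h$, since $yv\notin K^hv$; but $v(a-y)=\delta'>\Lambda\ni 0$, a contradiction. If $k\geq 2$, pass to the unramified extension $M$ of $K^h$ with $Mv=K^hv(\bar h)$; over $M$ the polynomial $t^{-m}q(a+tX)$ splits into blocks indexed by the roots of $\bar h$, and from one block one extracts an algebraic-type pseudo-Cauchy sequence (with values in $vM=vK^h$) of strictly smaller minimal degree, still with a pseudo-limit in a henselization — contradiction by the induction hypothesis. Finally, when $\delta'$ lies outside the convex hull $H$ of $\Lambda$ in $v\tilde K$, pass to the coarsening $w$ of $v$ with convex subgroup $H$: by Lemma~\ref{comp-hen} the field $(K^h,w)$ is henselian, $w(a-z)>0$ while $w(z-c)=0$ for all $c\in K$, so $zw\in (K^hw)\setminus Kw$ lies in the henselization of $Kw$, and one reduces to the (already settled) residue-field statement together with the multiplicativity of the defect under composition (Lemma~\ref{comp-def}). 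The inseparable case is reduced to the separable one by writing $q(X)=q_1(X^{p^e})$ with $q_1$ separable and applying the Frobenius isomorphism, which carries $K^h$ to the henselization of $K^{p^e}$ and, via $v(a^{p^e}-z^{p^e})=p^e\,v(a-z)$, preserves the pseudo-limit property; this is also the step that forces $f$ to be separable, i.e.\ $K(z)|K$ not purely inseparable.

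\textbf{Main obstacle.} The delicate part is the inductive step in the separable case when the residual polynomial is a proper power $\bar h^{\,k}$ with $k\geq 2$ (the situation where there is hidden ramification or defect in the cluster of roots): one must organize the descent through the unramified extension $M$ so that the new pseudo-Cauchy sequence genuinely has strictly smaller minimal degree while retaining a pseudo-limit inside a henselization, and one must carry the values carefully through the tower of coarsenings. The Frobenius reduction also needs care, since $\{v(z^{p^e}-c)\mid c\in K\}$ is in general strictly larger than $p^e\{v(z-d)\mid d\in K\}$, so it is the pseudo-limit property, not the literal strict inequality, that must be transported.
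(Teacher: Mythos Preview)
The paper does not prove this theorem; it is quoted verbatim from [K6] (Kuhlmann, \emph{Approximation of elements in henselizations}), so there is no in-paper proof to compare your proposal against. Your overall strategy --- showing $\Lambda$ has no maximum, extracting a pseudo-Cauchy sequence $(c_\nu)$ of algebraic type with both $a$ and $z$ as pseudo-limits, and then reducing to the claim about degree drop for algebraic pseudo-limits --- is a natural line of attack and the initial reductions are correct.

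However, the inductive argument for the claim has genuine gaps. First, the $k=1$ case does not yield the contradiction you state. After your normalization $y\mapsto (y-a)/t$, the element $a$ is sent to $0$, and $v(0-(y-a)/t)=0$, which is perfectly consistent with ``$v(b-\tilde y)\leq 0$ for all $b\in K^h$''; you have not shown $0\in\Lambda$, and even in the un-normalized picture the inequality $v(b'-y)\leq\delta'$ for all $b'\in K^h$ is compatible with $v(a-y)=\delta'$. Something more is needed here (for instance, an argument that the residue $((y-a)/t)v$ is forced into $Kv$ because $a$, being in the immediate extension $K^h$, is itself approximated by $K$ to order $>\gamma_\nu$ for each $\nu$ --- but this requires care since $\delta'$ may strictly exceed every $\gamma_\nu$). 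Second, your induction is on the minimal degree $d$ of the pseudo-Cauchy sequence, but in the step $k\geq 2$ you pass to an unramified extension $M$ of $K^h$ and invoke the hypothesis for a sequence over a \emph{different} base field; the claim must therefore be stated uniformly over all valued fields (with their henselizations), and you must verify that the new sequence over $M$ really has strictly smaller minimal degree and still admits a pseudo-limit in an appropriate henselization. Third, the coarsening argument when $\delta'$ lies outside the convex hull of $\Lambda$ and the Frobenius reduction are both too telegraphic to be checked; in particular, the Frobenius step needs to track which field plays the role of $K$ after raising to $p^e$-th powers, since $K^{p^e}$ and $K$ have the same henselization only up to an isomorphism that moves $a$.

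In short: the skeleton is reasonable and could plausibly be completed along the lines of Kaplansky's theory combined with ramification-theoretic descent, but as written the core induction does not close. The published proof in [K6] proceeds differently, via a direct analysis of how elements of $K^h$ are approximated from $K$ using the structure of the absolute decomposition group, rather than an induction on pseudo-Cauchy minimal degree.
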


With this theorem, we are able to prove:

\begin{lemma}                               \label{hcchc}
For every finite purely inseparable extension $L|K$, $L^c$ is linearly
disjoint from $K^{hc}$ over $K^c$, and
\[
[L^{hc}:K^{hc}]\>=\>[L^{ch}:K^{ch}]\>=\>[L^c:K^c]\>.
\]
The first equation holds more generally whenever $L|K$ is h-finite.
\end{lemma}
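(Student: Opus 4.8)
The plan is to deduce everything from one observation about an arbitrary valued field $E$: \emph{every element of $E^{hc}$ that is purely inseparable over $E^c$ already lies in $E^c$.} To prove it, suppose $\zeta\in E^{hc}$ is purely inseparable over $E^c$ but $\zeta\notin E^c$. Since $E^c$ is complete it is closed in $E^{hc}$, so $\zeta$ is bounded away from $E^c$: there is some $\delta$ with $v(\zeta-c)\le\delta$ for all $c\in E^c$. On the other hand $E^h$ is dense in $E^{hc}$ and $E^h\subseteq E^{ch}:=(E^c)^h$ by the universal property of the henselization, so $E^{ch}$ is dense in $E^{hc}$; choose $a\in E^{ch}$ with $v(a-\zeta)>\delta$. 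Then $v(a-c)=v(\zeta-c)\le\delta<v(a-\zeta)$ for every $c\in E^c$, i.e. $v(a-\zeta)>\{v(a-c)\mid c\in E^c\}$. As $\zeta\in\widetilde{E^c}\setminus E^c$ and $a\in (E^c)^h$, Theorem~\ref{disMT2} applied over the base field $E^c$ forces $E^c(\zeta)|E^c$ to be non-purely-inseparable, a contradiction.

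Now take a finite purely inseparable extension $L|K$. By Lemmas~\ref{Lh=L.Kh} and~\ref{Lc=L.Kc} we have $L^c=L.K^c$, $L^{hc}=L^h.K^{hc}=L^c.K^{hc}$ and $L^{ch}=(L^c)^h=L^c.K^{ch}$; in particular $L^c|K^c$ is purely inseparable. To prove the linear disjointness of $L^c$ and $K^{hc}$ over $K^c$, write $L^c=K^c(z_1,\dots,z_r)$ with $z_i^p\in K_{i-1}:=K^c(z_1,\dots,z_{i-1})$ and $z_i\notin K_{i-1}$, so that $[L^c:K^c]=p^r$. Each $K_{i-1}$ is complete (being finite over $K^c$), and the same computation as above gives $K_{i-1}^{hc}=K_{i-1}.K^{hc}$. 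If $z_i$ belonged to $K_{i-1}.K^{hc}=K_{i-1}^{hc}$, it would be an element of $K_{i-1}^{hc}\setminus K_{i-1}$ purely inseparable over $K_{i-1}=K_{i-1}^c$, contradicting the observation; hence $[K_i.K^{hc}:K_{i-1}.K^{hc}]=p$ for all $i$, and therefore $[L^c.K^{hc}:K^{hc}]=p^r=[L^c:K^c]$.

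This yields $[L^{hc}:K^{hc}]=[L^c.K^{hc}:K^{hc}]=[L^c:K^c]$. For the remaining equality it suffices to note that $L^c|K^c$ is purely inseparable while $K^{ch}=(K^c)^h|K^c$ is separable-algebraic, and a purely inseparable and a separable extension of a common base field are always linearly disjoint; hence $[L^{ch}:K^{ch}]=[L^c.K^{ch}:K^{ch}]=[L^c:K^c]$ as well, which proves the displayed chain of equalities.

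Finally, let $L|K$ be merely h-finite. If $L|K$ is purely inseparable, then $L\cap K^h=K$, so $[L:K]=[L^h:K^h]<\infty$ and we are back in the finite case. In general, writing $L_s$ for the separable-algebraic closure of $K$ in $L$, the extension $L|L_s$ is finite purely inseparable and $L_s|K$ is h-finite separable, and the quantities $[L^{hc}:K^{hc}]$ and $[L^{ch}:K^{ch}]$ are both multiplicative in towers of h-finite extensions, so it suffices to treat the two factors. The purely inseparable one has just been handled; for the separable one, Lemma~\ref{hchc=hc} says $K^h$ is separable-algebraically closed in $K^{hc}$, and $K^{ch}$ lies between $K^h$ and $K^{hc}$, so passing to the Galois closure of $L_s^h|K^h$ shows that $L_s^h$ is linearly disjoint from both $K^{hc}$ and $K^{ch}$ over $K^h$, whence $[L_s^{hc}:K^{hc}]=[L_s^h:K^h]=[L_s^{ch}:K^{ch}]$. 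The only real work is the observation of the first paragraph; I expect the main obstacle there to be the approximation step --- that $E^c$ is closed in $E^{hc}$ while $E^{ch}$ is dense in it --- since it is exactly this that produces an element of $(E^c)^h$ satisfying the hypothesis of Theorem~\ref{disMT2}.
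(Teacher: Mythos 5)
Your proof is correct and follows essentially the same route as the paper's: the heart in both cases is an application of Theorem~\ref{disMT2} to an element that is purely inseparable over, but bounded away from, the completion, using density of the henselization in its completion to produce the approximating element $a$. The only differences are cosmetic --- you apply Theorem~\ref{disMT2} over the complete base field $E^c$ (with $a\in E^{ch}$) and run a tower of degree-$p$ steps, where the paper works over an intermediate field $N$ of $L|K$ (with $a\in N^h$), and you obtain $[L^{ch}:K^{ch}]=[L^c:K^c]$ directly from the linear disjointness of a purely inseparable extension from a separable one, where the paper sandwiches it between two inequalities.
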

\begin{proof}
Take a finite purely inseparable extension $L|K$ and
assume that $L^c$ is not linearly disjoint from $K^{hc}$ over $K^c$.
Then there exists an intermediate field $N$ between $L$ and $K$ and an
element $z\in L\setminus N$, $z^p \in N$, such that
\[
z \notin N.K^c\mbox{ but } z \in N .K^{hc}\;.
\]
Since $z\not\in N.K^c=N^c$, the set $\{z-c \mid c\in N\}$ is bounded
from above. Since $z\in N.K^{hc}=N^{hc}$, there exists an element $a\in
N^h$ such that $v(a-z)=v(z-a)>v(z-c)$ for all $c\in N$. This implies
$v(a-c)=\min\{v(a-z),v(z-c)\}=v(z-c)$, so that $v(a-z)>\{v(a-c)\mid c\in
N\}$. Now Theorem~\ref{disMT2}, with $N$ in place of $K$, proves that
$N^h$ and $N(z)$ are not linearly disjoint over $N$, which is a
contradiction since $N(z)|N$ is purely inseparable, while the
henselization of a valued field is a separable extension.

We have proved that $L^c$ is linearly disjoint from $K^{hc}$ over $K^c$.
Since $L^c \subseteq L^{hc}$ and $L.K^{hc}=L^{hc}$, we also have that
$L^c.K^{hc}= L^{hc}$. This together with the fact we have just proved
implies that $[L^{hc}:K^{hc}]=[L^c:K^c]$.

Now we observe that $K^c\subseteq K^{ch}\subseteq K^{ch}$ and $L^c.K^{ch}
=L^{ch}$, which yields that
\[
[L^{hc}:K^{hc}]\>\leq\>[L^{ch}:K^{ch}]\>\leq\>[L^c:K^c]\>=\>
[L^{hc}:K^{hc}]\>,
\]
so equality holds everywhere.

\parm
Now take an arbitrary h-finite extension $L|K$, and take $L_s|K$ to be
its maximal separable subextension. By what we have seen earlier, the
finite extension $L_s^h$ is linearly disjoint from $K^{hc}$ over $K^h$.
Since $L|K$ is h-finite, the subextension $L_s^h|K^h$ is finite, and
since $L_s^h.K^{hc}=L_s^{hc}$, we obtain that $[L_s^{hc}:K^{hc}]=
[L_s^h:K^h]$. Since $K^h\subseteq K^{ch}\subseteq K^{ch}$
and $L^h.K^{ch}=L^{ch}$, we find that
\[
[L_s^{hc}:K^{hc}]\>\leq\>[L_s^{ch}:K^{ch}]\>\leq\>[L_s^h:K^h]\>=\>
[L_s^{hc}:K^{hc}]\>,
\]
so equality holds everywhere.

We observe that $[L_s:L]$ must be finite since $L^h|K^h$ is finite by
assumption and the purely inseparable extension $L$ is linearly disjoint
from the separable extension $L_s^h$ over $L_s$. Thus, applying what we
have shown in the first part of the proof, with $L_s$ in place of $K$,
we obtain that $[L^{hc}:L_s^{hc}]=[L^{ch}:L_s^{ch}]$. Therefore,
$[L^{hc}:K^{hc}]=[L^{hc}:L_s^{hc}]\cdot [L_s^{hc}:K^{hc}]
=[L^{ch}:L_s^{ch}]\cdot [L_s^{ch}:K^{ch}]=[L^{ch}:K^{ch}]$.
\end{proof}

\sn
{\bf Proof of Proposition~\ref{ddcdch}:}
\n
Take an h-finite extension $L|K$. We have:
\begin{eqnarray*}
d(L^c|K^c) & = & \frac{[L^{ch}:K^{ch}]}{(vL^c:vK^c)\cdot [L^cv:K^cv]}
\>=\>\frac{[L^{ch}:K^{ch}]}{(vL:vK)\cdot [Lv:Kv]}\\
& = & \frac{[L^{hc}:K^{hc}]}{(vL:vK)\cdot [Lv:Kv]} \>=\>d_c(L|K)\>,
\end{eqnarray*}
where the second equality holds since the completion is an immediate
extension, and the third equality is taken from the previous lemma.

The second assertion of the proposition has been proven in
Lemma~\ref{hfin}.   \QED

\mn
{\bf Proof of Proposition~\ref{cens}:}
\n
Take a finite extension $L|K$, and $L_s|K$ its the maximal separable
subextension of $L|K$. Then $L|L_s$ is purely inseparable, and as we
have see in the proof of Lemma~\ref{hcchc}, it must be finite. By
Lemma~\ref{hfin}, $d_q(L_s|K)=1$ and
\[
d_q(L|K) \>=\> d_q(L|L_s)\cdot d_q(L_s|K) \>=\> d_q(L|L_s)
\>=\> \frac{[L^h:L_s^h]}{[L^{hc}:L_s^{hc}]}\;.
\]
Since $L$ is linearly disjoint from $L_s^h$ over $L_s\,$, we find that
\[
[L^h:L_s^h]\>=\>[L.L_s^h:L_s^h]\>=\>[L:L_s]\>=\>[L:K]_{\rm insep}\>.
\]
Further, $L^c=L.L_s^c$ is a purely inseparable extension of $L_s^c$ and
therefore linearly disjoint from $L_s^{ch}$ over $L_s^c$. Using in
addition the first equation from Lemma~\ref{hcchc} and the fact that
$L_s^c=L_s.K^c$ is a separable extension, we obtain that
\[
[L^{hc}:L_s^{hc}]\>=\>[L^{ch}:L_s^{ch}]\>=\>[L^c.L_s^{ch}:L_s^{ch}]\>=\>
[L^c:L_s^c]\>=\>[L^c:K^c]_{\rm insep}\>.
\]
This proves the proposition.
\QED

\sn
{\bf Proof of Theorem~\ref{tcs}:}
\n
$K$ is q-defectless if and only if every finite extension $L|K$ is
q-defectless. In view of the multiplicativity (\ref{multcq}) of the
defect quotient and the fact that every finite extension is contained in
a finite normal extension, it follows that $K$ is q-defectless if and
only if every finite normal extension $L|K$ is q-defectless. Again by
multiplicativity, and by the fact that a normal extension $L|K$ admits
an intermediate field $N$ such that $N|K$ is purely inseparable and
$L|N$ is separable and thus q-defectless (Lemma~\ref{hfin}), it follows
that $K$ is q-defectless if and only if every finite purely inseparable
extension $L|K$ is q-defectless. By Proposition~\ref{cens}, this is the
case if and only if $[L:K]=[L^c:K^c]=[L.K^c:K^c]$, i.e., $L$ is linearly
disjoint from $K^c$ over $K^c$, for every finite purely inseparable
extension $L|K$. This holds if and only if $K^c|K$ is separable.
\QED

We will now consider the behaviour of the defects under coarsenings of
the valuation.

\begin{lemma}                   \label{coal}
Take a finite extension $(L|K,v)$ and a decomposition $v =
w\circ\ovl{w}$ of $v$ with nontrivial $w$. Then
\begin{eqnarray}
&&d_q(L|K,v) = d_q(L|K,w)\label{coa1}\\
&&d_c(L|K,v) =
d_c(L|K,w)\cdot d(Lw|Kw,\ovl{w})
\label{coa2}\;.
\end{eqnarray}
\end{lemma}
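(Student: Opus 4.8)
The plan rests on one elementary observation: since $w$ is nontrivial, the valuation $v$ and its coarsening $w$ induce the \emph{same} topology on $K$, and likewise on $L$ (a well-known fact for dependent valuations; see [EP]). Consequently the completions $K^c$ and $L^c$, as topological fields and in particular as fields, do not depend on whether we form them with respect to $v$ or to $w$, and $L^c|K^c$ is one and the same finite extension in both cases; by Lemma~\ref{Lc=L.Kc} it equals $L.K^c$.

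Given this, (\ref{coa1}) is immediate from Proposition~\ref{cens}, which has already been proved: applying it first with $v$ and then with $w$ yields
\[
d_q(L|K,v)\>=\>\frac{[L:K]_{\rm insep}}{[L^c:K^c]_{\rm insep}}\>=\>d_q(L|K,w)\>,
\]
because the numerator is purely field-theoretic and the denominator depends only on the field extension $L^c|K^c$, which by the previous paragraph is the same for both valuations.

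For (\ref{coa2}) I would combine (\ref{coa1}) with the factorization (\ref{dmt}) and the multiplicativity of the defect under composition of valuations. Passing to the henselizations $K^{h(v)}$ and $L^{h(v)}$ --- which are henselian for $v$, hence for $w$, and whose $w$-residue fields are the $\overline{w}$-henselizations of $Kw$ and of $Lw$ --- Lemma~\ref{comp-def} gives, for the h-finite extension $(L|K,v)$,
\[
d(L|K,v)\>=\>d(L|K,w)\cdot d(Lw|Kw,\overline{w})\>.
\]
On the other hand (\ref{dmt}) gives $d(L|K,v)=d_c(L|K,v)\,d_q(L|K,v)$ and $d(L|K,w)=d_c(L|K,w)\,d_q(L|K,w)$. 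Substituting these two equalities into the displayed formula and cancelling the common nonzero factor $d_q(L|K,v)=d_q(L|K,w)$ furnished by (\ref{coa1}) leaves exactly (\ref{coa2}).

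Everything above is routine once the displayed composition formula for the defect is available at the level of the (possibly non-henselian) ground field $K$; establishing it is where the real work lies and will be the main obstacle, since Lemma~\ref{comp-def} is stated only for henselian base fields. The reduction proceeds by passing to $K^{h(v)}$ and $L^{h(v)}$ and comparing their $w$-defect with $d(L|K,w)$ and $d(Lw|Kw,\overline{w})$; the delicate point is to verify $d(L^{h(v)}|K^{h(v)},w)=d(L|K,w)$, i.e.\ that the enlargement from $K^{h(w)}$ to $K^{h(v)}$ --- governed entirely by the residue henselization $(Kw)^{h(\overline{w})}|Kw$ --- does not change the $w$-defect. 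This comes down to a count of local degrees carried out via (\ref{localdegr}) at the residue level, and it is the technical heart of the proof.
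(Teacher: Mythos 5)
Your argument is essentially the paper's own proof: (\ref{coa1}) is obtained from the observation that the $v$-completion and the $w$-completion coincide (dependent valuations induce the same topology) together with Proposition~\ref{cens}, and (\ref{coa2}) then follows by writing $d_c=d/d_q$ and inserting the composition formula $d(L|K,v)=d(L|K,w)\cdot d(Lw|Kw,\ovl{w})$ of Lemma~\ref{comp-def}. The reduction you flag at the end as the ``technical heart'' (justifying that formula over a non-henselian base, i.e.\ comparing the $w$-defect over $K^{h(v)}$ with that over $K^{h(w)}$ via the residue henselization) is not carried out in the paper either --- Lemma~\ref{comp-def} is simply invoked after the tacit passage to henselizations --- so your proposal matches the paper's route and is, if anything, more explicit about that point.
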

\begin{proof}
To prove the equation for the defect quotient, we use
Proposition~\ref{cens} together with the fact that for every nontrivial
coarsening $w$ of the valuation $v$, the completion $K^{c(w)}$ of $K$
with respect to $w$ coincides with the completion $K^{c}$ with respect
to $v$. We obtain:
\[
d_q(L|K,v) \>=\> \frac{[L:K]}{[L^{c(v)}:K^{c(v)}]}\>=\>
\frac{[L:K]}{[L^{c(w)}:K^{c(w)}]} \>=\> d_q(L|K,w)\;.
\]
This proves equation~(\ref{coa1}). Using this result, we compute:
\[
d_c(L|K,v)\>=\>\frac{d(L|K,v)}{d_q(L|K,v)}\>=\>
\frac{d(L|K,w)\cdot d(Lw|Kw,\ovl{w})}{d_q(L|K,w)}\>=\>
d_c(L|K,w)\cdot d(Lw|Kw,\ovl{w})\>,
\]
proving equation~(\ref{coa2}).
\end{proof}

\pars
\begin{lemma}           \label{ncc}
Let $(L|K,v)$ be a finite extension of henselian fields and assume that
$v$ admits no coarsest nontrivial coarsening. Then there is a nontrivial
coarsening $w$
such that
\[
d_c (L|K,w) \>=\> 1 \>.
\]
If in addition the extension $L|K$ is separable, this means that
\[
d(L|K,w) \>=\> 1 \;\;\mbox{ \ and \ \ }\;\; d(L|K,v)\>=\>
d(Lw|Kw,\ovl{w})\>,
\]
where $\ovl{w}$ is the valuation induced by $v$ on $Lw$.
\end{lemma}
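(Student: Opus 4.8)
The plan is to write down an explicit coarsening. If $v$ is trivial the hypothesis is vacuous, and if $\chara Kv=0$ then $K$ is defectless by Corollary~\ref{corKv=0}, so $w=v$ works; thus I may assume $v$ nontrivial and $\chara Kv=p>0$. The only structural input I extract from the hypothesis is: ``$v$ has no coarsest nontrivial coarsening'' says exactly that $vK$ has no largest proper convex subgroup, which, the convex subgroups of an ordered abelian group forming a chain, means $vK$ is the union of its proper convex subgroups. Hence every element of $vK$ — and therefore, clearing denominators, every element of the divisible hull $v\tilde K$ — lies in the divisible hull $\tilde H\subseteq v\tilde K$ of some proper convex subgroup $H$ of $vK$; and since convex subgroups are totally ordered, any finite subset of $v\tilde K$ lies in a single such $\tilde H$. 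The same holds with $vK$ replaced by $vE$ for any algebraic $E|K$, since $vE/vK$ is torsion. For a convex subgroup $H$ of $vK$ I write $w_H$ for the associated coarsening; by Lemma~\ref{comp-hen}, every field in sight, being algebraic over the henselian field $K$ (Lemma~\ref{Lh=L.Kh}), stays henselian with respect to $w_H$ and to the induced residue valuation.

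For the reduction, let $L_s=K(\alpha)$ be the maximal separable subextension of $L|K$, and pick a tower $L_s=F_0\subsetneq\cdots\subsetneq F_m=L$ of purely inseparable steps $F_j=F_{j-1}(\theta_j)$ with $\theta_j^p\in F_{j-1}$ and $[F_j:F_{j-1}]=p$. By multiplicativity of $d_c$ (Equation~(\ref{multcq})) it suffices to find a single proper convex subgroup $H$ of $vK$ which simultaneously makes $K(\alpha)|K$ and each $F_j|F_{j-1}$ have trivial completion defect with respect to $w_H$; I will in fact make $K(\alpha)|K$ outright defectless, which then also gives the second assertion. Two claims do the work. \emph{Claim 1 (separable step).} Let $f$ be the minimal polynomial of $\alpha$ over $K$, of degree $n$, with roots $\alpha=\alpha_1,\dots,\alpha_n$ in its splitting field (uniquely valued over the henselian $K$). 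Using the structural observation pick a proper convex subgroup $H_1$ of $vK$ with $\tilde H_1$ containing $v\alpha$ and all $v(\alpha_i-\alpha_j)$, $i\neq j$ (finitely many elements of $v\tilde K$). For any proper convex $H\supseteq H_1$ and $w=w_H$: from $v\alpha,v(\alpha_i-\alpha_j)\in\tilde H$ all $\alpha_i$ are $w$-units with pairwise distinct residues, so $\bar f:=\prod_i(X-\alpha_i w)$ is a squarefree polynomial of degree $n$ over $Kw$; since $f$ is irreducible and $(K,w)$ henselian, $\bar f$ is irreducible over $Kw$ — a factorization of the squarefree $\bar f$ into coprime nonconstant factors would lift, by Hensel's Lemma, to a factorization of $f$ — whence $[K(\alpha)w:Kw]\ge\deg\bar f=n=[K(\alpha):K]$ and therefore $d(K(\alpha)|K,w)=1$.

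\emph{Claim 2 (inseparable degree-$p$ step).} For a step $E(\theta)|E$ with $\theta^p\in E$, $[E(\theta):E]=p$: if it is $v$-defectless then $d(E(\theta)|E,w)\le d(E(\theta)|E,v)=1$ for every coarsening $w$ (Lemma~\ref{comp-def}). Otherwise the defect is $p$ and $E(\theta)|E$ is $v$-immediate; if moreover $\theta\in E^c$ then $E(\theta)\subseteq E^c$, so $E(\theta)^{hc}=E(\theta).E^c=E^c=E^{hc}$ (Lemma~\ref{Lc=L.Kc}), hence $d_c(E(\theta)|E,v)=1$, and then $d_c(E(\theta)|E,w)=1$ for every nontrivial coarsening by Lemma~\ref{coal}. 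Finally, if $\theta\notin E^c$, then $\{v(\theta-c)\mid c\in E\}$ is bounded above by some $\gamma^*\in v\tilde K$; pick a proper convex $H_2$ of $vK$ with $\tilde H_2$ containing $\gamma^*$ and $v\theta$. For any proper convex $H\supseteq H_2$ and $w=w_H$ one has $w\theta=0$ and $w(\theta-c)\le 0$ for every $c\in E$, so $\theta w\notin Ew$, i.e.\ $[E(\theta)w:Ew]\ge 2$; as $[E(\theta):E]=p$ is prime this forces $d(E(\theta)|E,w)=1$. Now let $H$ be the largest of the finitely many proper convex subgroups $H_1$ and $H_2$ produced by Claims 1 and 2 over the various steps (they are totally ordered), and $w=w_H$, a nontrivial coarsening; then $d_c(K(\alpha)|K,w)=1$ and $d_c(F_j|F_{j-1},w)=1$ for all $j$, so $d_c(L|K,w)=1$ by multiplicativity, proving the first assertion. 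If $L|K$ is separable then $L=K(\alpha)$ and Claim 1 gives $d(L|K,w)=1$, whence $d(L|K,v)=d(Lw|Kw,\ovl w)$ by Lemma~\ref{comp-def}.

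The step I expect to be the main obstacle is Claim 2 in the case $\theta\notin E^c$: one must first recognize this condition as boundedness of the distances $v(\theta-c)$, and then verify carefully that once $\tilde H$ absorbs $\gamma^*$ and $v\theta$ the residue of $\theta$ genuinely escapes $Ew$. This interplay between the completion and the coarsening is exactly the point where an inseparable defect can fail to be moved into the residue, and it is the reason the general statement only asserts triviality of $d_c$ rather than of $d$.
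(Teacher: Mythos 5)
Your proof is correct and follows essentially the same strategy as the paper's: reduce via multiplicativity of $d_c$ to a simple separable step (handled by coarsening until the reduction of the minimal polynomial is separable and applying Hensel's Lemma over the still-henselian $(K,w)$) and purely inseparable degree-$p$ steps (handled by the dichotomy on whether the generator lies in the completion, with $\theta\in E^c$ giving $d_c=1$ outright and $\theta\notin E^c$ giving a coarsening that pushes the degree into the residue field). The differences are purely organizational: you split off the maximal separable subextension and collect all the required proper convex subgroups at the end, where the paper interleaves the simple steps and refines the coarsening inductively, and your Claim 1 controls the values of the root differences where the paper instead makes $w$ trivial on the field generated by the coefficients.
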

\begin{proof}
First, we note that if $L|K$ is a finite separable extension with
$d_c(L|K,w)=1$, then $d(L|K,w)=1$ by Proposition~\ref{ddcdch}; in view
of Lemma~\ref{comp-def}, this implies that $d(L|K,v)=d(Lw|Kw,\ovl{w})$.

Next, we prove the first assertion of our lemma in two special cases:
\sn
\underline{Case 1:} \ $L = K(a)$ is separable. Let $f(X)\in K[X]$ be the
minimal polynomial of $a$ over $K$ and let $c_i$, $0\leq i\leq n$ be the
coefficients of $f$. Then by our hypothesis on the rank of $v$ there
exists a nontrivial coarsening $w$ of $v$ such that $w$ is trivial on
$k(c_0,\ldots,c_n)$, where $k$ denotes the prime field of $K$. This
shows that $fw$ is a separable polynomial over $Kw$ of the same degree
as $f$; moreover it is irreducible since if it were reducible, then the
same would follow for $f$ by Hensel's Lemma (as $(K,w)$ is henselian by
our hypothesis on $(K,v)$ and Lemma~\ref{comp-hen}). Hence in this
case, $[L:K] = [Lw:Kw]$ and consequently $d(L|K,w) = 1$ and $d_c(L|K,w)
= 1$.

\sn
\underline{Case 2:}\ \ $L|K$ is a purely inseparable extension of
degree $p$. If $d_q(L|K,v) = p$, we are done because then $d_c(L|K,v) =
1$ and we take $w=v$. So we assume that
%
%
%
$d_q(L|K,v) = 1$. Then by Proposition~\ref{cens}, $[L:K]_{\rm insep} =
[L^c : K^c]_{\rm insep}$, showing that $a$ cannot be an element of
$K^c$. Consequently, there is an element $\alpha\in vK$ such that
$v(a-b)<\alpha$ for all $b\in K$. By our hypothesis on the coarsenings
of $v$ there exists a nontrivial coarsening $w$ of $v$ such that $w(a-b)
= 0$, hence $aw \not= bw$ for all $b\in K$ (this is satisfied if the
coarsening corresponds to any proper convex subgroup of $vK$ which
includes $\alpha$). This shows $aw\notin Kw$ and thus $[Lw:Kw] = p,$
which yields that $d(L|K,w) = 1$ and $d_c(L|K,w)=1$.

\pars
It remains to treat the case where $L|K$ is not simple. We then write
$L=K(a_1,\ldots,a_n)$ with $n>1$ and such that for every $i<n$, the
extension $K(a_1,\ldots,a_{i+1})|K(a_1,\ldots,a_i)$ is of degree $p$ if
it is inseparable. Now we proceed by induction on $n$.
Suppose that we have already found a nontrivial coarsening $w'$ of $w$
such that $d_c(K(a_1,\ldots,a_{n-1})|K,w')=1$. Applying what we have
proved above, with $K(a_1,\ldots,a_{n-1})$ in place of $K$ and $w'$ in
place of $v$, we find a nontrivial coarsening $w$ of $w'$ such that
$d_c(L|K(a_1,\ldots,a_{n-1}),w)=1$. From Lemma~\ref{coal} we know that
also $d_c(K(a_1,\ldots,a_{n-1})|K,w)=1$ since $w$ is a coarsening of
$w'$. By the multiplicativity of the completion defect, we obtain that
$d_c(L|K,w)=1$. This completes the proof of our Lemma.
\end{proof}

%
%
\subsection{The case of subhenselian function fields}{\label{secSHFF}}
Let us introduce some useful notions. Take an element $z$ in some valued
field extension of $(K,v)$. Then $z$ is called \textbf{value
transcendental over $(K,v)$} if $vz$ is not a torsion element modulo
$vK$, and it is called \textbf{residue transcendental over $(K,v)$} if
$vz=0$ and $zv$ is transcendental. If either is the case, we call $z$
\textbf{valuation transcendental over $(K,v)$}. For example, the
elements $x_i$ from (\ref{valindep}) are value transcendental, and the
elements $y_j$ are residue transcendental over $(K,v)$.

\begin{lemma}
Let $K(T)|K$ be an extension of valued fields with standard valuation
transcendence basis $T$. Then for every element $b\in K(T)
\setminus K$, there exist elements $c',c\in K$ such that $c'(b-c)$
is valuation transcendental.
\end{lemma}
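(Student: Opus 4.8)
The plan is to write $b$ in terms of the standard valuation transcendence basis $T=\{x_i,y_j\mid i\in I,\ j\in J\}$ and use the explicit description of $v$ on $K(T)$ from Lemma~\ref{K(T)}. Write $b=g/h$ with $g,h\in K[T]$ coprime; since $b\notin K$, at least one of $g,h$ is a genuine polynomial involving some $x_i$ or $y_j$. Expanding $g$ and $h$ into monomials $c_k\prod_i x_i^{\mu_{k,i}}\prod_j y_j^{\nu_{k,j}}$, formula (\ref{value}) tells us that $vg$ and $vh$ are each attained by a monomial of minimal value, and that the residue $(g/h)v$ (after scaling by a suitable element of $K$ to make the value zero) is a ratio of sums of residues of such monomials. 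There are two cases to handle: either $vb$ is already non-torsion modulo $vK$, in which case $b$ itself is value transcendental and we take $c'=1$, $c=0$; or $vb$ is torsion modulo $vK$, say $n\,vb\in vK+\sum_i\Z vx_i$ forces (by rational independence of the $vx_i$ over $vK$) that actually $vb\in vK$, so we may pick $c'\in K$ with $vc'b=0$ and then examine the residue.

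So the main work is the case $vb\in vK$. Choose $c'\in K$ so that $v(c'b)=0$; set $b_0:=c'b$, so $b_0v\in K(T)v$. If $b_0v\notin Kv$, then by Lemma~\ref{K(T)} we have $K(T)v=Kv(y_jv\mid j\in J)$, so $b_0v$ is a nonconstant rational function over $Kv$ in the residues $y_jv$, which are algebraically independent over $Kv$; hence $b_0v$ is transcendental over $Kv$, i.e.\ $b_0=c'b$ is residue transcendental, and we take $c=0$. The remaining possibility is $b_0v\in Kv$: then choose $c\in K$ with $cv=b_0v$, and consider $b_0-c=c'b-c$. Now $v(c'b-c)>0$. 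The point to establish is that $c'(b-c/c')$ — equivalently, after absorbing constants, $c''(b-c''')$ for suitable $c'',c'''\in K$ — is valuation transcendental; equivalently, that $b':=c'b-c$ cannot be "brought into $K$" by such an affine rescaling, i.e.\ that $b$ is not of the form $c'''+(1/c'')\cdot(\text{element of }vK\text{-value in }K)$ all the way down.

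The key step, and the main obstacle, is to rule out an infinite descent: a priori one could have $vb'\in vK$, rescale to value $0$, find the new residue again in $Kv$, subtract, and repeat. I would block this by an invariant that strictly decreases. The natural one comes from the monomial expansion: writing $b=g/h$ in lowest terms, the numerator $g$ after clearing denominators and the successive "subtract a constant and rescale" operations can only decrease the number of monomials, or decrease a fixed positive-integer measure (such as the total degree of $g$ in the variables of $T$, or the cardinality of its monomial support), because subtracting $c\in K$ from $b_0=c'g/h$ produces $(c'g-c h)/h$, and the minimal-value monomial of $c'g$ cancels against that of $ch$ — strictly reducing the support of the numerator while $h$ is unchanged. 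Since this measure is a nonnegative integer, the process terminates, and at termination we are forced into one of the two "transcendental" cases above. Thus there exist $c',c\in K$ with $c'(b-c)$ valuation transcendental. $\Box$

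Where I expect the technical care to concentrate: proving that the cancellation of minimal-value monomials genuinely reduces the chosen integer invariant (one must check the minimal-value monomial of $c'g$ and of $ch$ are the same monomial in the $x_i,y_j$, which follows because they have equal value and their residues must cancel), and making sure the invariant is well-defined independently of the lowest-terms representative. Once that is in place, the rest is bookkeeping with Lemma~\ref{K(T)}.
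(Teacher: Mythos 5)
Your overall strategy---reduce via Lemma~\ref{K(T)} to the two cases ``$vb$ non-torsion modulo $vK$'' and ``residue outside $Kv$'', and otherwise subtract a constant and iterate---is fine in its first two cases (and your observation that $K(T)v=Kv(y_jv\mid j\in J)$ is purely transcendental, so any residue outside $Kv$ is automatically transcendental, is correct and clean). But the termination argument, which you rightly identify as the crux, does not work. Writing $b=g/h$, one step replaces the numerator $c'g$ by $c'g-ch$, whose monomial support is only contained in $\mbox{\rm supp}(g)\cup\mbox{\rm supp}(h)$, not in $\mbox{\rm supp}(g)$: the higher-value monomials of the \emph{denominator} enter the new numerator. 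For instance, with residue transcendental $y_1,y_2,y_3$ and $vt>0$, take $b=y_1/(y_1+ty_2+ty_3)$; then $vb=0$, $bv=1$, and $b-1$ has numerator $-t(y_2+y_3)$, so the support grows from one monomial to two (with denominator $y_1+ty_2^5$ the total degree grows from $1$ to $5$). Moreover the claimed cancellation is a cancellation of residues, not of monomials: with numerator $y_1+(1+t)y_2$ and denominator $y_1+y_2$ one has $bv=1$, yet $g-h=ty_2\neq 0$---the minimal-value part is merely pushed up to higher value, not annihilated. Finally, the monomial of minimal value is not unique in general (distinct monomials in the $y_j$ with unit coefficients all have value $0$), so ``the minimal-value monomial of $c'g$'' is not well defined. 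Without a genuinely monotone quantity, an infinite descent (i.e.\ $b$ being a pseudo-limit of elements of $K$) is not excluded, and ruling that out is exactly the mathematical content of the lemma.

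The paper avoids iteration altogether by making a \emph{single} well-chosen subtraction. Writing $b=f/g$, it picks $c$ so that the new numerator $h=f-cg$ has zero coefficient at the exponent vector of a least-value monomial $g_0$ of the denominator; then a least-value monomial $h_0$ of $h$ has a different exponent vector from $g_0$, so $h_0/g_0$ is a nonconstant monomial in the elements of $T$. If some value transcendental $x_i$ occurs in it, $v(b-c)=v(h_0/g_0)$ is non-torsion modulo $vK$ and $b-c$ is value transcendental; if only residue transcendental $y_j$'s occur, then after normalizing by $c'=d^{-1}$ the initial form of $h$ can no longer be proportional to that of $g$ (the offending exponent has been removed), so the residue of $c'(b-c)$ lies in $Kv(y_jv\mid j\in J)\setminus Kv$ and is transcendental. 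If you want to salvage your iterative scheme you would need a different termination argument (none of the invariants you propose is monotone); the more economical repair is to replace the iteration by this one-shot subtraction.
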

\begin{proof}
Take $b=f/g\in K(T)$ with $f,g\in K[T]$. By Lemma~\ref{K(T)}, the value
of the polynomials $f,g$ is equal to the minimum of the values of the
monomials in $f$ resp.\ $g$, and these monomials are uniquely
determined; we will call them $f_0$ and $g_0$. If $f_0$ differs from
$g_0$ just by a constant factor $c\in K$, then we set $h=f-cg$ and
observe that the monomial $h_0$ of least value in $h$ will not anymore
lie in $Kg_0$. If already $f_0\notin Kg_0$, then we put $c=0$, $h=f$ and
$h_0=f_0$. Note that $h\not= 0$ and thus $h_0\not= 0$ since by
hypothesis, $f/g\notin K$. We have that
\[
b-c\>=\>\frac{f}{g}-c\>=\> \frac{h}{g}\mbox{\ \ with\ \ }v\frac{h}{g}
\>=\> v\frac{h_0^{}}{g_0^{}} \;,
\]
and we know that in the quotient $h_0/g_0$, at least one element of
$T$ appears with a nonzero (integer) exponent. If at least one of
these appearing elements from $T$ is value transcendental, then
$h_0/g_0$ and thus also $b-c$ is value transcendental over $K$. In this
case, we set $c'=1$.

In the remaining case, we write
\[
\frac{h_0}{g_0} = d\cdot y_1^{e_1}\cdot \ldots  \cdot y_s^{e_s}
\;,\;\;e_1,\ldots ,e_s\in\Z\;,
\]
where $d\in K$, and $y_1,\ldots ,y_s$ are different residue
transcendental elements from $T$. Since the residues
${y_1}v,\ldots,{y_s}v$ are algebraically independent over $Kv$, this
shows that $h_0/dg_0$ and thus also $h/dg$ are residue transcendental
over $K$. Putting $c'=d^{-1}$, we obtain that $c'(b -c)$ is residue
transcendental over $K$.
\end{proof}

The following proposition is a part of the assertion
of Theorem~\ref{313}.

\begin{proposition}                            \label{2.7}
Take a subhenselian function field $F|K$ without transcendence
defect. If $K$ is a q-defectless field or $vK$ is not cofinal in
$vF$, then $F$ is a q-defectless field.
\end{proposition}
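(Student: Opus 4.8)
The plan is to reduce everything to a rational function field $K(T)$ --- where the valuation is completely explicit by Lemma~\ref{K(T)} --- and from there to a single valuation-transcendental generator. Since q-defectlessness passes between a field and its henselization, and the henselization of a subhenselian function field is that of an ordinary valued function field, we may assume that $F|K$ is a valued function field; then $F|K$ has a finite standard valuation transcendence basis $T$, and $F|K(T)$ is finite, so by Lemma~\ref{fin} it suffices to prove that $K(T)$ is q-defectless (this also yields that $F|K(T)$ is q-defectless). The hypothesis descends to $K(T)|K$: if $K$ is q-defectless nothing changes, and if $vK$ is not cofinal in $vF$ then, as $vF/vK(T)$ is a torsion group, $vK$ is not cofinal in $vK(T)$.

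The case ``$vK$ not cofinal in $vK(T)$'' is absorbed into the case ``$K$ q-defectless'': coarsening $v$ by the proper convex hull of $vK$ in $vK(T)$ gives a nontrivial coarsening $w$ which is trivial on $K$, so $(K(T)|K,w)$ is again a valued function field without transcendence defect by Lemma~\ref{trdc}, now over the trivially valued --- hence, by Theorem~\ref{tcs}, q-defectless --- field $K$; and by Lemma~\ref{coal} one has $d_q(L|K(T),v)=d_q(L|K(T),w)$ for every finite $L|K(T)$, so it is enough to treat $(K(T),w)$. Assume therefore that $K$ is q-defectless. Ordering $T=\{t_1,\ldots,t_n\}$ so that each $t_k$ is valuation transcendental over $K(t_1,\ldots,t_{k-1})$ and inducting on $n$, the problem reduces to the key step: \emph{if $(K,v)$ is q-defectless and $t$ is valuation transcendental over $(K,v)$, then $K(t)$ is q-defectless.} If $t$ is residue transcendental over a trivially valued $K$, then $K(t)$ is trivially valued and complete, so there is nothing to prove; if $t$ is value transcendental and $vK$ is not cofinal in $vK(t)$, coarsening as above reduces $K(t)$ to the rank-one $t$-adic valuation with residue field $K$, whose completion is a Laurent series field over $K$, and separability of this completion over $K(t)$ is elementary --- a $p$-th root in it of some $f\in K(t)\setminus K(t)^p$ would have vanishing derivative, hence be a power series in $t^p$ with $p$-th power coefficients, so $f\in K^p(t^p)=K(t)^p$.

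It remains to treat the key step when $vK$ is cofinal in $vK(t)$ and $K$ is q-defectless. By Theorem~\ref{tcs} it suffices to show that no $a\in K(t)\setminus K(t)^p$ has $a^{1/p}\in K(t)^c$. When $a$ is valuation transcendental over $K$ this is immediate: $a^{1/p}$ is then valuation transcendental over $K$, so either $v(a^{1/p})=va/p\notin vK(t)$ or $(a^{1/p})v=(av)^{1/p}\notin K(t)v$, and since $K(t)^c$ is an immediate extension of $K(t)$ this forces $a^{1/p}\notin K(t)^c$. For $a\notin K$ not yet of this form, the preparatory lemma produces $c,c'\in K$ with $c'(a-c)$ valuation transcendental, and the identity $a^{1/p}=(c')^{-1/p}(c'(a-c))^{1/p}+c^{1/p}$, together with the explicit valuation theory of $K(t)$ (Lemma~\ref{K(T)}), reduces the statement $a^{1/p}\notin K(t)^c$ to the special case $a\in K$ (more precisely, to the statement that $K^{1/p}$ is linearly disjoint from $K(t)^c$ over $K$). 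Thus the whole matter comes down to the special case $a\in K$.

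So assume $a\in K$; then $a\notin K^p$ and, as $K$ is q-defectless, $a^{1/p}\notin K^c$, and we must exclude $a^{1/p}\in K(t)^c\setminus K^c$ --- this is the main obstacle. Part of it is valuation-theoretic: if $va\notin pvK$ then $va/p\notin vK(t)=vK(t)^c$, and if $va=0$ but $av\notin(Kv)^p$ then $(av)^{1/p}\notin K(t)v=K(t)^cv$ (here one uses that $K$ is relatively algebraically closed in the rational function field $K(t)v$, and that $K(t)v=Kv$ when $t$ is value transcendental), so in both subcases $a^{1/p}\notin K(t)^c$. The genuinely delicate subcase is $va=0$ and $av\in(Kv)^p$, where neither value nor residue gives information and where the q-defectlessness of $K$ must be exploited essentially, in the form that $K^h$ is separable-algebraically closed in $K^{hc}$ and that $K^{hc}|K^h$ is separable: passing to henselizations one first obtains $a^{1/p}\notin K^{hc}$, and then shows that a hypothetical $a^{1/p}\in K(t)^{hc}$ --- the limit of a Cauchy net from $K(t)^h=K^h(t)^h$ --- would already lie in $K^{hc}$, by a descent argument of the type used in Lemma~\ref{eeu} (with Lemma~\ref{fofdefh} for the algebraic bookkeeping), contradicting the preceding. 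All the remaining ingredients are the reduction lemmas~\ref{fin}, \ref{coal} and~\ref{trdc} together with the explicit valuation theory of $K(t)$ from Lemma~\ref{K(T)}.
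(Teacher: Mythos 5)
Your reduction to $K(T)$ via Lemma~\ref{fin} and Theorem~\ref{tcs}, and your absorption of the non-cofinal case by passing to a coarsening that is trivial on $K$ (using that the completion, and hence the defect quotient, depends only on the coarsening), match the paper's strategy in outline. But the core of the argument has genuine gaps. First, separability of $K(t)^c|K(t)$ means linear disjointness from the perfect hull, which is strictly stronger than the absence of single $p$-th roots: you must control arbitrary finite purely inseparable extensions $N|K(t)$ (towers of $p$-th roots over fields that are no longer rational function fields), not just elements $a^{1/p}$ with $a\in K(t)$. Second, your ``immediate'' dichotomy for valuation-transcendental $a$ fails: for $a=ut^p$ with $u\in K\setminus K^p$ a unit whose residue lies in $(Kv)^p$, one has $v(a^{1/p})=vt\in vK(t)$ and the residue of $a^{1/p}/t$ is $(uv)^{1/p}$, so neither value nor residue obstructs $a^{1/p}\in K(t)^c$; this element lands squarely in your ``genuinely delicate subcase'' rather than in the easy one. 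Third, and most seriously, that delicate subcase is exactly the crux of the proposition and you do not prove it: ``a descent argument of the type used in Lemma~\ref{eeu}'' is not the right tool (that lemma and Lemma~\ref{fofdefh} concern finitely generated subextensions and defects, not membership of algebraic elements in $K^{hc}$), and the paper explicitly warns, citing [K3], that the analogous descent for henselizations can fail, so the needed statement cannot be taken for granted.

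The paper closes precisely these gaps with two ingredients your proposal does not use. (i) An ultrametric approximation argument: if $a$ is algebraic over a finite purely inseparable extension $N$ of $K$ and $a\in N.F^c\setminus N^c$, then some $b\in N(T)$ approximates $a$ better than every element of $N$ does; but the preparatory lemma makes $c'(b-c)$ valuation transcendental for suitable $c,c'\in N$, and since $c'(a-c)$ is algebraic over $N$ the triangle inequality forces $v(a-b)\le v(a-c)$, a contradiction. This proves in one stroke that $K^{1/p^{\infty}}$ is linearly disjoint from $F^c$ over $K$ -- the full linear disjointness, not just the exclusion of individual $p$-th roots. (ii) The Generalized Stability Theorem together with Corollary~\ref{idimmsep}: $K^{1/p^{\infty}}(T)$ is inseparably defectless and $F^c.K^{1/p^{\infty}}$ is an immediate extension of it, hence separable over it; combined with (i) this yields that all of $F^{1/p^{\infty}}$ -- including elements such as $t^{1/p}$, which your generator-by-generator scheme never reaches -- is linearly disjoint from $F^c$ over $F$. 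Without (ii) or a substitute for it, the induction on the elements of $T$ cannot establish separability of $K(T)^c|K(T)$.
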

\begin{proof}
Take a standard valuation transcendence basis $T$ of $F|K$. In view of
Lemma~\ref{fin} we only have to show that $K(T)$ is a q-defectless field
(hence we may assume $F=K(T)$). This will follow from Theorem~\ref{tcs}
if we can show that the completion of $F$ is a separable extension.

As the first case, let us assume that $K$ is a q-defectless field and
that $vK$ is cofinal in $vF$. Then the completion $F^c$ of $F$
contains the completion $K^c$ of $K$. By our hypothesis on $K$ and
Theorem~\ref{tcs}, $K^{1/p^{\infty}}$ is linearly disjoint from $K^c$
over $K$. We want to show now that $K^{1/p^{\infty}}$ is even linearly
disjoint from $F^c$ over $K$. This will follow if we prove that
$K^{1/p^{\infty}}.K^c$ is linearly disjoint from $F^c$ over $K^c$.

\begin{figure}[htb]
\begin{picture}(280,270)(-120,-80)
\put(60,0){\makebox(0,0){$F$}}
\put(0,60){\makebox(0,0){$F^c$}}
\put(120,-60){\makebox(0,0){$K$}}
\put(180,0){\makebox(0,0){$K^{1/p^{\infty}}$}}
\put(120,60){\makebox(0,0){$F.K^{1/p^{\infty}}$}}
\put(60,120){\makebox(0,0){$F^c.K^{1/p^{\infty}}$}}
\put(180,120){\makebox(0,0){$F^{1/p^{\infty}}$}}
\put(0,180){\makebox(0,0){$K^{1/p^{\infty}}(T)^c$}}
\put(7,53){\line(1,-1){46}}
\put(67,113){\line(1,-1){46}}
\put(7,173){\line(1,-1){46}}
\put(67,7){\line(1,1){46}}
\put(127,67){\line(1,1){46}}
\put(7,67){\line(1,1){46}}
\put(67,-7){\line(1,-1){46}}
\put(127,-53){\line(1,1){46}}
\put(127,53){\line(1,-1){46}}
\put(33,27){\makebox(0,0)[rt]{imm.}}
\put(93,87){\makebox(0,0)[rt]{imm.}}
\put(140,60){\makebox(0,0)[l]{$= K^{1/p^{\infty}}(T)$}}
\end{picture}
\end{figure}

Assume the contrary. Then there is a finite purely inseparable extension
$N$ of $K$ and an element $a\in K^{1/p^{\infty}}$ such that $a\in N.F^c
\setminus N.K^c$. Since $a \notin N.K^c = N^c$, the set $v(a-N)$ must be
bounded from above. Now $N.F^c = (N.F)^c$, hence there exists an element
$b\in N.F= N(T)$ such that $v(a-b) > v(a-N)$. But according to the
preceding Lemma, there exist elements $c,c'\in N$ such that $c'(b -c)$
is valuation transcendental. As $a$ is algebraic over $N$, this yields
that
\[
v(c'(a - c) - c'(b - c))\>=\> \min\{vc'(a - c),vc'(b -c)\}
\>\leq\> vc'(a - c)
\]
and consequently,
\begin{eqnarray*}
v(a-b) & = & v(c'a - c'b) -vc'\>=\> v(c'(a - c) - c'(b -c)) -vc'\\
& \leq & vc'(a - c) - vc' = v(a - c)\;,
\end{eqnarray*}
a contradiction as $v(a-b)>v(a-N)$. We have shown that
$K^{1/p^{\infty}}$ is linearly disjoint from $F^c$ over $K$.
Consequently, $F.K^{1/p^{\infty}}$ is linearly disjoint from $F^c$ over
$F$.

By Theorem~\ref{thmGST}, $F.K^{1/p^{\infty}} = K^{1/p^{\infty}}(T)$ is
an inseparably defectless field. On the other hand, the extension
$F^c.K^{1/p^{\infty}} |F.K^{1/p^{\infty}}$ is immediate since
$F^c.K^{1/p^{\infty}}$ is included in $K^{1/p^{\infty}}(T)^c$. Now
Corollary~\ref{idimmsep} shows that $F^{1/p^{\infty}}=
K(T)^{1/p^{\infty}}$ is linearly disjoint from $F^c.K^{1/p^{\infty}}$
over $F.K^{1/p^{\infty}}$. Putting this result together with what we
have already proved, we see that $F^{1/p^{\infty}}$ is linearly disjoint
from $F^c$ over $F$. Hence by Theorem~\ref{tcs}, $F$ is q-defectless.
This completes our proof in the first case.

\pars
In the remaining second case, $vK$ is not cofinal in $vF$, i.e.,
the convex hull of $vK$ in $vF$ is a proper convex subgroup of $vF$.
Consequently, there exists a nontrivial coarsening $w$ of the valuation
$v$ on $F$ which is trivial on $K$. Trivially, $(K,w)$ is a defectless
field, and so is $(F,w)$ according to Theorem~\ref{thmGST} since by
Lemma~\ref{trdc} it is a function field without transcendence defect
over $(K,w)$. Thus any finite purely inseparable extension is defectless
and thereby linearly disjoint from the $w$-completion $F^{c(w)}$ of $F$
since this is an immediate extension of $F$. On the other hand, the
topology induced by $v$ equals the topology induced by any
nontrivial coarsening of $v$, whence $F^{c(w)} = F^c$.
Consequently, $F^{1/p^{\infty}}$ is linearly disjoint from $F^c$.
By virtue of Theorem~\ref{tcs}, this completes our proof.
\end{proof}

On the basis of Proposition~\ref{2.7}, we are able to prove the
following lemma:
\begin{lemma}                                           \label{lpr'}
Let $K(T)|K$ be an extension of valued fields with standard valuation
transcendence basis $T$. Let $L$ be a finite extension of $K$. If
$vK$ is cofinal in $v(K(T))$, then
\begin{eqnarray*}
d_c(L(T)|K(T)) & = & d_c(L|K)\\
d_q(L(T)|K(T)) & = & d_q(L|K)\;.
\end{eqnarray*}
If $vK$ is not cofinal in $v(K(T))$, then
\begin{eqnarray*}
&d_c(L(T)|K(T)) = d(L(T)|K(T))=
d(L|K)&\\
&d_q(L(T)|K(T)) = 1\;.&
\end{eqnarray*}
\end{lemma}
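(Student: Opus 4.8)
The plan is to push everything back to the equality $d(L(T)|K(T)) = d(L|K)$, which is available from Lemma~\ref{lpr} because $[L(T):K(T)] = [L:K]$ ($T$ being algebraically independent over $L$), together with the multiplicativity $d = d_c\cdot d_q$. Since $L(T)|K(T)$ is finite, $d_c$ and $d_q$ here are the finite-extension notions of Section~\ref{sechF}, and in either case it suffices to determine $d_q(L(T)|K(T))$: the value of $d_c(L(T)|K(T))$ then follows by division.

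Suppose first that $vK$ is not cofinal in $v(K(T))$. The convex hull $H$ of $vK$ in $v(K(T))$ is then a proper convex subgroup, and since $vK\subseteq H$ the coarsening $w$ of $v$ on $K(T)$ attached to $H$ is nontrivial but trivial on $K$. Then $(K,w)$ is trivially valued, hence defectless, and by Lemma~\ref{trdc} the extension $(K(T)|K,w)$ is a valued function field without transcendence defect, so $(K(T),w)$ is defectless by Theorem~\ref{thmGST}. Hence $d(L(T)|K(T),w) = 1$ --- by (\ref{localdegr}) and the fundamental inequality, equality in (\ref{fiq}) forces each local defect to be $1$ --- and therefore $d_q(L(T)|K(T),w) = 1$, whence $d_q(L(T)|K(T),v) = 1$ by Lemma~\ref{coal}. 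Combined with $d(L(T)|K(T),v) = d(L|K,v)$ this gives the displayed formulas in this case.

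Now assume $vK$ is cofinal in $v(K(T))$; we must show $d_q(L(T)|K(T)) = d_q(L|K)$. If $L_s|K$ denotes the maximal separable subextension, then $L_s(T)$ is the maximal separable subextension of $L(T)|K(T)$, because base change by the algebraically independent set $T$ carries the separable--purely inseparable factorization of $L|K$ to that of $L(T)|K(T)$; so by Lemma~\ref{hfin} and the multiplicativity (\ref{multcq}) we may replace $K$ by $L_s$ and assume $L|K$ purely inseparable. Here one checks that $T$ is still a standard valuation transcendence basis of $L_s(T)|L_s$ and that $vL_s$ is still cofinal in $v(L_s(T))$, both because $vL_s/vK$ is a torsion group and $L_sv|Kv$ is algebraic. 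Writing then $L|K$ as a tower of degree-$p$ purely inseparable steps and repeating this remark at each step, we reduce by (\ref{multcq}) to $L = K(a)$ with $a^p = u\in K$ and $a\notin K$. Now Proposition~\ref{cens} yields $d_q(L|K) = p/[K^c(a):K^c]$ and $d_q(L(T)|K(T)) = p/[K(T)^c(a):K(T)^c]$, so it all comes down to the equivalence
\[
a\in K^c \iff a\in K(T)^c .
\]

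The forward implication holds because $vK$ is cofinal in $v(K(T))$, so $K(T)^c$ contains $K^c$ by the argument of Lemma~\ref{Lc=L.Kc}. For the converse --- the crux of the proof --- suppose $a\notin K^c$. Since $(a-c)^p = u-c^p\in K$ for every $c\in K$, each $v(a-c)$ is torsion modulo $vK$, and as $a\notin K^c$ these values are bounded above, so there is $\alpha\in vK$ with $v(a-c)\le\alpha$ for all $c\in K$. If $a$ lay in $K(T)^c$ we could choose $b\in K(T)$ with $v(a-b)>\alpha$, and then necessarily $b\notin K$. By the Lemma preceding Proposition~\ref{2.7} there are $c,c'\in K$ with $c'(b-c)$ valuation transcendental over $(K,v)$; but $c'(a-c)$ is algebraic over $K$, hence its value is torsion modulo $vK$ and, if that value is $0$, its residue is algebraic over $Kv$. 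A short case distinction according to whether $c'(b-c)$ is value transcendental or residue transcendental then gives $v\bigl(c'(a-c)-c'(b-c)\bigr)\le v\bigl(c'(a-c)\bigr)$, i.e.\ $v(a-b)\le v(a-c)\le\alpha$, contradicting $v(a-b)>\alpha$. Thus $a\notin K(T)^c$, the equivalence holds, and $d_q(L(T)|K(T)) = d_q(L|K)$; dividing $d(L(T)|K(T)) = d(L|K)$ by this gives $d_c(L(T)|K(T)) = d_c(L|K)$. The step I expect to be the real obstacle is precisely this last contradiction: making sure that a ``valuation transcendental'' element $c'(b-c)$ of $K(T)$ genuinely cannot approximate the algebraic element $c'(a-c)$ better than the elements of $K$ do, which is exactly what the preceding Lemma and the bookkeeping with torsion values and residue algebraicity are for.
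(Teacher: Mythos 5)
Your proof is correct, but it is organized quite differently from the paper's. In the non-cofinal case the paper simply cites Proposition~\ref{2.7} (which makes $K(T)$ a q-defectless field) and combines it with (\ref{d_hK(T)}) and (\ref{dmt}); you re-derive the relevant part of that proposition's proof instead (a coarsening trivial on $K$, Theorem~\ref{thmGST}, then Lemma~\ref{coal}), which is the same mathematics. The real divergence is in the cofinal case. The paper reduces to the completion defect: it computes $d_c(L|K)=d(L^{hc}|K^{hc})=d(L^{hc}(T)|K^{hc}(T))$ via Lemma~\ref{lpr} over the base $K^{hc}$, then uses that $K^{hc}$ is complete, hence q-defectless, hence $K^{hc}(T)$ is q-defectless by Proposition~\ref{2.7}, to identify this with $d(L(T)^{hc}|K(T)^{hc})=d_c(L(T)|K(T))$ through the identity $(K^{hc}(T))^{hc}=K(T)^{hc}$. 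You instead reduce to the defect quotient, strip off the separable part with Lemma~\ref{hfin} and (\ref{multcq}), break the purely inseparable remainder into degree-$p$ steps, and use Proposition~\ref{cens} to reduce everything to the equivalence $a\in K^c \Leftrightarrow a\in K(T)^c$, which you prove with the same approximation lemma on valuation transcendental elements $c'(b-c)$ that underlies Proposition~\ref{2.7}. Your route buys a more self-contained argument: it needs only the approximation estimate from the first half of Proposition~\ref{2.7}'s proof and avoids the heavier second half (the inseparably defectless case of the Generalized Stability Theorem and Corollary~\ref{idimmsep}), because the purely inseparable elements you must control all lie in a finite extension of $K$ rather than in $K(T)^{1/p^{\infty}}$. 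The paper's route buys brevity by reusing Proposition~\ref{2.7} wholesale and by handling $d_c$ in one stroke. The small verifications in your reduction (that $T$ remains a standard valuation transcendence basis over $L_s$ and over each intermediate field, that cofinality persists, that $K^c\subseteq K(T)^c$ under cofinality, and the case distinction giving $v\bigl(c'(a-c)-c'(b-c)\bigr)\le v\bigl(c'(a-c)\bigr)$) all check out.
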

\begin{proof}
If $vK$ is not cofinal in $v(K(T))$, the assertion follows from
Proposition~\ref{2.7} together with equations (\ref{d_hK(T)}) and
(\ref{dmt}). Let us assume now that $vK$ is cofinal in $v(K(T))$. Again
by equations (\ref{d_hK(T)}) and (\ref{dmt}), it suffices to prove the
first equality.
\begin{figure}[htb]
\begin{picture}(260,160)(-130,-10)
\put(9,0){\makebox(0,0){$K$}}
\put(9,40){\makebox(0,0){$K^{hc}$}}
\put(9,80){\makebox(0,0){$K^{hc}(T)$}}
\put(9,120){\makebox(0,0){$K(T)^{hc}$}}
\put(90,27){\makebox(0,0){$L$}}
\put(90,67){\makebox(0,0){$L^{hc}$}}
\put(90,107){\makebox(0,0){$L^{hc}(T)$}}
\put(90,147){\makebox(0,0){$L(T)^{hc}$}}
\put(24,5){\line(3,1){51}}
\put(24,45){\line(3,1){51}}
\put(30,87){\line(3,1){39}}
\put(30,127){\line(3,1){39}}
\put(9,7){\line(0,1){25}}
\put(9,47){\line(0,1){25}}
\put(9,87){\line(0,1){25}}
\put(90,34){\line(0,1){25}}
\put(90,74){\line(0,1){25}}
\put(90,114){\line(0,1){25}}
\put(-44,120){\makebox(0,0){$(K^{hc}(T))^{hc} =$}}
\put(110,147){\makebox(0,0)[l]{$= L.K(T)^{hc} =
(L^{hc}(T))^{hc}$}}
\put(110,107){\makebox(0,0)[l]{$= L.K^{hc}(T)$}}
\put(110,67){\makebox(0,0)[l]{$= L.K^{hc}$}}
\end{picture}
\end{figure}
\n
Using Lemma~\ref{lpr}and that $L^{hc}(T) = (L.K^{hc})(T) =
L.(K^{hc}(T))$, we obtain that
\begin{equation}                         \label{first}
d_c(L|K)\>=\> d(L^{hc}|K^{hc})\>=\>d(L^{hc}(T)|K^{hc}(T))\;.
\end{equation}
The complete field $K^{hc}$ is q-defectless by Theorem~\ref{tcs}. Hence
by Proposition~\ref{2.7}, $K^{hc}(T)$ is a q-defectless field too.
Consequently,
\begin{eqnarray*}
d(L^{hc}(T)|K^{hc}(T)) &=& d_c(L^{hc}(T)|K^{hc}(T))\>=\>
d((L^{hc}(T))^{hc}|(K^{hc}(T))^{hc})\\
&=& d(L(T)^{hc}|K(T)^{hc})\;,
\end{eqnarray*}
where the last equation holds since $(L^{hc}(T))^{hc} = L(T)^{hc}$ and
$(K^{hc}(T))^{hc} = K(T)^{hc}$. Putting this together with equation
(\ref{first}), we obtain that
\[
d_c(L|K)\>=\>d(L(T)^{hc}|K(T)^{hc})\>=\> d_c(L(T)|K(T))\;.
\]
\end{proof}

\sn
{\bf Proof of Theorem~\ref{def'}:}
\n
Take any transcendence basis $T_0$ of $F|K$. As in the proof of
Theorem~\ref{def'} it follows that $K(T_0) |K$ is without transcendence
defect and admits a standard valuation transcendence basis $T$ over $K$.
We compute:
\begin{eqnarray*}
d_c(F|K(T_0)) & \leq & d_c(F|K(T_0))\cdot d_c(K(T_0)|K(T)) \>=\>
d_c(F|K(T))\>,\\
d_q(F|K(T_0)) & \leq & \leq d_q(F|K(T_0))\cdot d_q(K(T_0)|K(T)) \>=\>
d_q(F|K(T))\>,
\end{eqnarray*}
showing that
\[
d_c(F|K) \>=\> \sup_T\> d_c(F|K(T))\;\;\mbox{\ \ and\ \ }\;\;
d_q(F|K) \>=\> \sup_T\> d_q(F|K(T))\>,
\]
where the supremum is only taken over all standard valuation
transcendence bases of $F|K$. For the proof of equations~(\ref{m}) it
suffices now to show that $d_c(F|K(T))$ is equal for all standard
valuation transcendence basis $T$, and that the same holds for the
defect quotient.

\pars
If $vK$ is not cofinal in $vF$, then by virtue of Proposition~\ref{2.7},
$K(T)$ is q-defectless. Using Theorem~\ref{def}, we obtain:
\[
d_q(F|K(T))\>=\>1 \;\;\mbox{\ \ and\ \ }\;\; d_c(F|K(T)) \>=\> d(F|K(T))
\>=\> d(F|K)\>,
\]
independently of the standard valuation transcendence basis $T$.

\pars
If $vK$ is cofinal in $vF$, then $K(T)^{hc}$ contains $K^{hc}$.
By Proposition~\ref{2.7}, $K^{hc}(T)$ is a q-defectless field.
From this we deduce, using Theorem~\ref{def} again:
\begin{eqnarray*}
d_c(F|K(T)) & = & d(F^{hc}|K(T)^{hc})\>=\>
d((F.K^{hc})^{hc}|(K^{hc}(T))^{hc})\\
& = & d_c(F.K^{hc}|K^{hc}(T))\>=\>d(F.K^{hc}|K^{hc}(T))\>=\>
d(F.K^{hc}|K^{hc})\;.
\end{eqnarray*}
For the defect quotient, this implies that
\[
d_q(F|K(T))\>=\>\frac{d(F|K(T))}{d_c(F|K(T))}\>=\>
\frac{d(F|K)}{d(F.K^{hc}|K^{hc})}\;.
\]
This completes the proof of equations (\ref{m}).

To prove equation~(\ref{mu}), we take an arbitrary standard valuation
transcendence basis $T$ and compute, using Theorem~\ref{def} together
with what we have just proved,
\[
d(F|K) \>=\> d(F|K(T)) \>=\> d_c(F|K(T))\cdot d_q(F|K(T))
\>=\> d_c(F|K)\cdot d_q(F|K)\;.
\]

\pars
For the remainder of the proof, we will assume that $vK$ is cofinal in
$vF$. Take $K'$ as in the assertion of Theorem~\ref{def}, and a finite
extension $L$ of $K$ containing $K'$. Choosing any standard valuation
transcendence basis $T$ of $F|K$, we know by Theorem~\ref{def} that
$d(L.F|L(T))=1$, whence
\[
d_c(L.F|L(T)) = 1\;.
\]
Using this and equation (\ref{m}) as well as the multiplicativity of
the completion defect (see (\ref{multcq}), we deduce:
\begin{eqnarray*}
d_c(L.F|F)\cdot d_c(F|K) & = &
d_c(L.F|F)\cdot d_c(F|K(T))\>=\> d_c(L.F|K(T))\\
& = & d_c(L.F|L(T))\cdot d_c(L(T)|K(T))\>=\> d_c(L(T)|K(T))\\
& = & d_c(L(T)|K(T))\>,
\end{eqnarray*}
where the last question holds by Lemma~\ref{lpr'}. This proves the first
equation of (\ref{m1}). The proof of the first equation of (\ref{m3}) is
obtained by just replacing the completion defect by the defect quotient
in the above argument.

\pars
The second equations in (\ref{m1}) and (\ref{m3}) are shown as it was
done for the defect in the proof of Theorem~\ref{def}.
\QED

\parm
As immediate consequences we get the following corollaries.
\begin{corollary}                       \label{310a}
Assume $F$ to be a subhenselian function field without transcendence
defect over a q-defectless field $K$. Then $d_q(F|K)$ is
trivial.
\end{corollary}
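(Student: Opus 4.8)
The plan is to reduce the statement to Proposition~\ref{2.7}, Lemma~\ref{fin} and Theorem~\ref{def'}. First I would fix a standard valuation transcendence basis $T$ of $F|K$; such a basis exists because $F|K$ is without transcendence defect. The crucial point is that $K(T)$, equipped with the restriction of $v$, is itself a subhenselian function field over $K$ without transcendence defect: it can be taken as its own defining valued function field, so it is trivially subhenselian, and equality holds in the Abhyankar Inequality~(\ref{wtdgeq}) because $T$ is a standard valuation transcendence basis (the values of the $x_i$ are rationally independent over $vK$ and the residues of the $y_j$ are algebraically independent over $Kv$, and their number equals $\trdeg F|K$).

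Since $K$ is a q-defectless field by hypothesis, Proposition~\ref{2.7}, applied to the extension $K(T)|K$, shows that $K(T)$ is a q-defectless field. Because $F$ is subhenselian, the extension $F|K(T)$ is h-finite, so Lemma~\ref{fin} applies to it: it states that $K(T)$ is q-defectless if and only if both $(F|K(T),v)$ and $(F,v)$ are q-defectless. Using the direction we need, we conclude in particular that $(F|K(T),v)$ is q-defectless, i.e.\ $d_q(F|K(T))=1$. Finally, Theorem~\ref{def'} asserts that $d_q(F|K)=d_q(F|K(T))$ for any standard valuation transcendence basis $T$, so $d_q(F|K)=1$, as claimed.

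I do not expect a genuine obstacle, since the substance is already packaged in Proposition~\ref{2.7} and Theorem~\ref{def'}. The only points needing a moment's attention are: verifying that $K(T)$ really qualifies as a subhenselian function field without transcendence defect (so that Proposition~\ref{2.7} is applicable to it, not merely to $F$ itself), and being careful to invoke the equality $d_q(F|K)=d_q(F|K(T))$ from Theorem~\ref{def'} only for a standard valuation transcendence basis. An alternative, slightly longer route would use the maximum formula~(\ref{m3}) of Theorem~\ref{def'} together with $d_q(N|K)=1$ for every finite $N|K$ (as $K$ is q-defectless) and $d_q(N.F|F)=1$ for every finite extension of $F$ (as $F$ is q-defectless by Proposition~\ref{2.7}); but this requires the cofinality hypothesis and so is less uniform than the argument via Lemma~\ref{fin}.
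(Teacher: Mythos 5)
Your proof is correct and takes essentially the same route as the paper: although the printed proof of this corollary is left empty (it is listed among the ``immediate consequences'' of Theorem~\ref{def'}), the identical argument --- apply Proposition~\ref{2.7} to $K(T)$ to see it is q-defectless, conclude $d_q(F|K(T))=1$, and invoke $d_q(F|K)=d_q(F|K(T))$ from Theorem~\ref{def'} --- appears verbatim in the paper's proof of part a) of Theorem~\ref{313}, of which this corollary is a special case. Your care in checking that $K(T)$ is itself a subhenselian function field without transcendence defect, and in using Lemma~\ref{fin} to pass from ``$K(T)$ is a q-defectless field'' to ``the h-finite extension $F|K(T)$ is q-defectless,'' matches what the paper does implicitly.
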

\begin{proof}
\end{proof}
\begin{corollary}
Every subhenselian function field $F|K$ without transcendence defect
satisfies
\begin{eqnarray*}
d_c(F|K) &=& d_c(F^h|K) \>=\> d_c(F^h|K^h)\>,\\
d_q(F|K) &=& d_q(F^h|K) \>=\> d_q(F^h|K^h)\>.
\end{eqnarray*}
\end{corollary}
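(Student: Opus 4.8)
The plan is to reduce each of the six quantities to the completion defect, respectively the defect quotient, of a single h-finite extension, by passing to a common standard valuation transcendence basis, and then to quote the invariance of $d_c$ and $d_q$ of h-finite extensions under henselization (of the ground field and of the top field) recorded in Section~\ref{sechF}.

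First I would verify that $F^h|K$ and $F^h|K^h$ are again subhenselian function fields without transcendence defect, so that Theorem~\ref{def'} applies to them. Let $F_0|K$ be a valued function field with $(F,v)^h=(F_0,v)^h$. Then $(F^h)^h=F^h=(F_0)^h$, so $F^h|K$ is subhenselian with witness $F_0$. Moreover $F_0.K^h$ is a valued function field over $K^h$ (finitely generated over $K^h$ by the same generators as $F_0$ over $K$, of the same positive transcendence degree since $K^h|K$ is algebraic), and $F_0\subseteq F_0.K^h\subseteq (F_0)^h$ gives $(F_0.K^h)^h=(F_0)^h=F^h$; hence $F^h|K^h$ is subhenselian with witness $F_0.K^h$. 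Since henselizations are immediate we have $vF^h=vF$, $F^hv=Fv$, $vK^h=vK$, $K^hv=Kv$, and transcendence degrees are unchanged, so the Abhyankar Inequality (\ref{wtdgeq}) reads identically for $F|K$, $F^h|K$ and $F^h|K^h$; as it is an equality for $F|K$ by hypothesis, it is an equality in all three cases.

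Next I would fix a standard valuation transcendence basis $T=\{x_i,y_j\mid i\in I,\ j\in J\}$ of $F|K$. Because $F^h|F$ and $K^h|K$ are algebraic and immediate, condition (\ref{valindep}) is unaffected, so $T$ is simultaneously a standard valuation transcendence basis of $F^h|K$ and of $F^h|K^h$. I would also record the bookkeeping facts $(F^h)^h=F^h$ and $(K^h(T))^h=K(T)^h$, the latter because $K^h(T)\subseteq K(T)^h$ (the field $K(T)^h$ is henselian and contains both $K^h$ and $T$) while $K(T)\subseteq K^h(T)$, and that $F^h|K(T)$ and $F^h|K^h(T)$ are h-finite, since
\[
[(F^h)^h:(K(T))^h]\ =\ [(F^h)^h:(K^h(T))^h]\ =\ [F^h:K(T)^h]\ <\ \infty
\]
by h-finiteness of $F|K(T)$. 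Now Theorem~\ref{def'}, applied in turn to $F|K$, $F^h|K$ and $F^h|K^h$ with this common basis $T$, gives $d_c(F|K)=d_c(F|K(T))$, $d_c(F^h|K)=d_c(F^h|K(T))$ and $d_c(F^h|K^h)=d_c(F^h|K^h(T))$, and likewise with $d_q$ in place of $d_c$. Applying the invariance $d_c(L|M)=d_c(L^h|M^h)$ and $d_q(L|M)=d_q(L^h|M^h)$ from Section~\ref{sechF} to $(L,M)=(F,K(T))$, to $(L,M)=(F^h,K(T))$ and to $(L,M)=(F^h,K^h(T))$, and using $(F^h)^h=F^h$ and $(K^h(T))^h=K(T)^h$, shows that all three right-hand sides equal $d_c(F^h|K(T)^h)$, respectively $d_q(F^h|K(T)^h)$; this proves the corollary.

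The only step needing real care is the first one: checking that $F^h|K$ and $F^h|K^h$ are genuinely subhenselian function fields without transcendence defect, so that Theorem~\ref{def'} is available, and that the henselization identities above hold. Once that is settled the rest is a short chain of equalities, and I anticipate no genuine obstacle beyond being scrupulous about over which field each henselization is taken.
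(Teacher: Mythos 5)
Your proposal is correct and follows essentially the same route as the paper: fix one standard valuation transcendence basis $T$ of $F|K$, observe it serves for $F^h|K$ and $F^h|K^h$ as well, reduce all three quantities to $d_c(\cdot|\cdot(T))$ resp.\ $d_q(\cdot|\cdot(T))$ via Theorem~\ref{def'}, and collapse them to $d_c(F^h|K(T)^h)$ resp.\ $d_q(F^h|K(T)^h)$ using the henselization invariance from Section~\ref{sechF} together with $(K^h(T))^h=K(T)^h$. You are more explicit than the paper in verifying that $F^h|K$ and $F^h|K^h$ are again subhenselian function fields without transcendence defect, which is a welcome addition rather than a deviation.
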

\begin{proof}
Any standard valuation transcendence basis $T$ of $F|K$ is also a
standard valuation transcendence basis of $F^h|K$ and of $F^h|K^h$.
Hence, using Theorem~\ref{def'},
\begin{eqnarray*}
d_c(F|K) & = & d_c(F|K(T)) \>=\> d_c(F^h|K(T)^h)
\>=\> d_c(F^h|(K^h(T))^h)\\
& = & d_c((F^h)^h | (K^h(T))^h) \>=> d_c(F^h | K^h(T)) \>=>
d_c(F^h|K^h)
\end{eqnarray*}
and
\[
d_c(F^h|K(T)^h)\>=>d_c((F^h)^h|K(T)^h)\>=>d_c(F^h|K(T))\>=>d_c(F^h|K)\;.
\]
The assertions for the defect quotient are shown by combining the above
equations with the corresponding equations for the defect, and using
Theorem~\ref{def'}.
\end{proof}

\begin{corollary}                   \label{d2}
Let $E$ and $F$ be subhenselian function fields over $K$. If $E|F$ is
algebraic and $F|K$ has no transcendence defect, then $E|F$ is
h-finite and the following multiplicativity holds for the completion
defect and defect quotient:
\[
d_c(E|K) \>=\> d_c(E|F)\cdot d_c(F|K)\;\;\mbox{\ \ and\ \ }\;\;
d_q(E|K) \>=\> d_q(E|F)\cdot d_q(F|K)\;.
\]
\end{corollary}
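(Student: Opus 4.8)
The plan is to reduce the asserted multiplicativity to the already-established multiplicativity of the completion defect and defect quotient for h-finite extensions (see (\ref{multcq})) by inserting a rational subfield $K(T)$, and to use the Finiteness and Independence Theorem (Theorem~\ref{def'}) to pass between $d_c(\cdot\,|K)$ and $d_c(\cdot\,|K(T))$, and likewise for $d_q$. The first task is to produce one transcendence basis that works for both $E|K$ and $F|K$. Since $E|F$ is algebraic we have $\trdeg E|K=\trdeg F|K$, the group $vE/vF$ is torsion, and $Ev|Fv$ is algebraic; hence $\trdeg Ev|Kv=\trdeg Fv|Kv$ and $\rr vE/vK=\rr vF/vK$, and because $F|K$ is without transcendence defect the Abhyankar Inequality (\ref{wtdgeq}) becomes an equality for $E|K$ as well. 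Thus $(E|K,v)$ is a subhenselian function field without transcendence defect, and if $T=\{x_i,y_j\mid i\in I,\,j\in J\}$ is any standard valuation transcendence basis of $F|K$ (one exists since $F|K$ has no transcendence defect), then $T\subseteq F\subseteq E$ is algebraically independent over $K$ of cardinality $\trdeg E|K$, so it is a transcendence basis of $E|K$; rational independence of the $vx_i$ over $vK$ and algebraic independence of the $y_jv$ over $Kv$ are intrinsic, so $T$ still satisfies (\ref{valindep}) for $E|K$, i.e.\ it is a standard valuation transcendence basis of $E|K$ too.

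Next I would check that $E|F$ is h-finite, so that $d_c(E|F)$ and $d_q(E|F)$ are defined. We have the tower $K(T)\subseteq F\subseteq E$, and, taking all henselizations inside the fixed large algebraically closed extension, $K(T)^h\subseteq F^h\subseteq E^h$. By the remark made when subhenselian function fields were introduced, $F|K(T)$ and $E|K(T)$ are h-finite, so $[F^h:K(T)^h]$ and $[E^h:K(T)^h]$ are finite; multiplicativity of field degrees then forces $[E^h:F^h]$ to be finite, which is exactly h-finiteness of $E|F$.

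Finally I would assemble the identities. Applying equation (\ref{m}) of Theorem~\ref{def'} to $F|K$ and to $E|K$ with the common standard valuation transcendence basis $T$ yields $d_c(F|K)=d_c(F|K(T))$ and $d_c(E|K)=d_c(E|K(T))$, and similarly $d_q(F|K)=d_q(F|K(T))$ and $d_q(E|K)=d_q(E|K(T))$. Applying the multiplicativity (\ref{multcq}) to the h-finite tower $K(T)\subseteq F\subseteq E$ gives $d_c(E|K(T))=d_c(E|F)\cdot d_c(F|K(T))$ and $d_q(E|K(T))=d_q(E|F)\cdot d_q(F|K(T))$. Chaining the three equalities gives $d_c(E|K)=d_c(E|F)\cdot d_c(F|K)$ and $d_q(E|K)=d_q(E|F)\cdot d_q(F|K)$. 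The only point requiring any care is the claim in the first paragraph that a standard valuation transcendence basis of $F|K$ is automatically one of $E|K$ --- this is where the hypotheses ``$E|F$ algebraic'' and ``$F|K$ without transcendence defect'' are genuinely used; everything else is formal manipulation with h-finite towers together with the multiplicativity and independence results already proved.
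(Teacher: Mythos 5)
Your proof is correct and follows essentially the same route as the paper: pick a standard valuation transcendence basis $T$ of $F|K$, observe it also serves for $E|K$, and chain Theorem~\ref{def'} with the multiplicativity (\ref{multcq}) through $K(T)\subseteq F\subseteq E$. The only (harmless) divergence is in establishing h-finiteness of $E|F$, where the paper argues via the underlying function fields $E_0,F_0$ with $E_0.F_0|F_0$ finite, while you deduce it from the finiteness of $[E^h:K(T)^h]$ and $[F^h:K(T)^h]$; both are valid.
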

\begin{proof}
First, we prove that $E|F$ is h-finite. As $E$ and $F$ are subhenselian
function fields, $E^h$ and $F^h$ are the henselizations of valued
function fields $E_0$ and $F_0$ over $K$. Since $E|F$ is algebraic, also
$E^h|F^h$ and $E_0.F_0|F_0$ are algebraic. As $E_0.F_0$ is also a
function field over $K$, $E_0.F_0|F_0$ is finite and the same holds for
$(E_0.F_0)^h|F_0^h$. But $F_0^h=F^h$, and since $E^h=E_0^h$ contains $F$
and thus also $F_0$, we see that $(E_0.F_0)^h=E^h$. We have proved that
$E^h|F^h$ is finite.

Taking any standard valuation transcendence basis $T$ of $F|K$ (which is
also a standard valuation transcendence basis of $E|K$ since $E|F$ is
algebraic), we compute
\[
d_c(E|K)\>=\>d_c(E|K(T))\>=\>d_c(E|F)\cdot d_c(F|K(T))\>=\>
d_c(E|F)\cdot d_c(F|K)\>,
\]
using Theorem~\ref{def'} and the multiplicativity of the
completion defect (see (\ref{multcq}). The proof for the defect
quotient is similar.
\end{proof}

\begin{corollary}                   \label{d1}
Let $F|K$ be a subhenselian function field without transcendence defect.
If $vK$ is cofinal in $vF$ then there exists a finite
extension $K'$ of $K$ such that
\[
d_c(K'.F|K)\>=\>d_c(K'|K)\;\;\mbox{\ \ and\ \ }\;\;
d_q(K'.F|K)\>=\>d_q(K'|K)\;.
\]
\end{corollary}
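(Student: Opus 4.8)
The plan is to deduce the corollary directly from Theorem~\ref{def'}; the two claimed equalities are obtained by moving one factor across in equations~(\ref{m1}) and~(\ref{m3}), using the multiplicativity of $d_c$ and $d_q$ recorded in Corollary~\ref{d2}. In particular the cofinality hypothesis will be used precisely where it is needed, namely to invoke~(\ref{m1}) and~(\ref{m3}).

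First I would take $K'$ to be the finite extension of $K$ furnished by Theorem~\ref{def} (equivalently, the field denoted $K'$ in Theorem~\ref{def'}); this will be the field whose existence the corollary asserts. Before any computation I would check that $K'.F$ is again a subhenselian function field over $K$ without transcendence defect, so that $d_c(K'.F|K)$ and $d_q(K'.F|K)$ are defined at all. Since $K'|K$ is finite, $K'.F|F$ is algebraic, $v(K'.F)/vF$ is a torsion group, and $(K'.F)v/Fv$ is algebraic; hence the rational rank of $v(K'.F)/vK$ equals that of $vF/vK$, $\trdeg (K'.F)v/Kv = \trdeg Fv/Kv$, and $\trdeg K'.F|K = \trdeg F|K$, so equality in the Abhyankar inequality persists. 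The subhenselian property is verified exactly as in the proof of Corollary~\ref{d2}: if $F^h$ is the henselization of a valued function field $F_0|K$, then $(K'.F)^h=(K'.F_0)^h$ and $K'.F_0$ is a valued function field over $K$.

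Now the computation. Applying Corollary~\ref{d2} with $E:=K'.F$ (legitimate since $K'.F|F$ is algebraic and $F|K$ has no transcendence defect) gives that $K'.F|F$ is h-finite and
\[
d_c(K'.F|K) = d_c(K'.F|F)\cdot d_c(F|K),\qquad
d_q(K'.F|K) = d_q(K'.F|F)\cdot d_q(F|K).
\]
On the other hand, since $vK$ is cofinal in $vF$, equations~(\ref{m1}) and~(\ref{m3}) of Theorem~\ref{def'} apply to the finite extension $K'$ of $K$ and yield $d_c(F|K) = d_c(K'|K)/d_c(K'.F|F)$ and $d_q(F|K) = d_q(K'|K)/d_q(K'.F|F)$, that is, $d_c(K'.F|F)\cdot d_c(F|K) = d_c(K'|K)$ and $d_q(K'.F|F)\cdot d_q(F|K) = d_q(K'|K)$. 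Substituting these into the two displays above gives $d_c(K'.F|K) = d_c(K'|K)$ and $d_q(K'.F|K) = d_q(K'|K)$, which is the assertion.

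I expect no serious obstacle: the statement is essentially a rearrangement of the formulas established in Theorem~\ref{def'}. The only point needing (easy) care is the preliminary check that $K'.F$ is still a subhenselian function field without transcendence defect, so that the defects on the left-hand sides are meaningful; this is routine and is handled just as in Corollary~\ref{d2}.
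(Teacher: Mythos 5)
Your proposal is correct and follows essentially the same route as the paper: take $K'$ as in Theorem~\ref{def'}, apply Corollary~\ref{d2} with $E=K'.F$, and combine with the first equations of~(\ref{m1}) and~(\ref{m3}) for $L=K'$. The preliminary verification that $K'.F$ is again a subhenselian function field without transcendence defect is a reasonable extra care that the paper leaves implicit.
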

\begin{proof}
We take $K'$ as in the assertion of Theorem~\ref{def'} and apply
Corollary~\ref{d2} to the first equations in (\ref{m1}) and (\ref{m3}),
where we set $L = K'$.
\end{proof}

\pars
Now we are ready for the
\sn
{\bf Proof of Theorem~\ref{313}:}
\n
a): \
The assertion that $F$ is a q-defectless field has already been proven
in Proposition \ref{2.7}. Now we take a standard valuation transcendence
basis $T$ of $F|K$. Again by Proposition~\ref{2.7}, $K(T)$ is a
q-defectless field, hence in view of Theorem~\ref{def'},
\[
d_q(F|K) \>=\> d_q(F|K(T)) \>=\> 1\;.
\]

\sn
b) \ We assume that $vK$ is cofinal in $vF$ and that $K$ is a
c-defectless field. We choose $K'$ according to Corollary~\ref{d1}. Then
we have:
\[
d_c(K'.F|K) \>=\> d_c(K'|K) \>=\> 1\;.
\]
From Corollary~\ref{d2}, where we put $E= K'.F$, we get that
$d_c(F|K)=1$. On the other hand, if $F'$ is an arbitrary finite
extension of $F$, then
it is also a subhenselian function field without transcendence defect
over $K$ and consequently, like $F$ it satisfies $d_c(F'|K) = 1$. By
Corollary~\ref{d2}, we conclude that
\[d_c(F'|F) = 1\;.\]
This shows that $F$ is a c-defectless field.
\QED

The following theorem is a corollary to the Theorem~\ref{ims} and
Theorem~\ref{313}:

\begin{theorem}                                        
Let $F|K$ be a subhenselian function field without transcendence
defect. If $K$ is separably defectless and $vK$ is cofinal in $vF$, then
$F$ is separably defectless.
\end{theorem}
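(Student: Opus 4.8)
The plan is to deduce this purely formally from Theorem~\ref{ims} and part b) of Theorem~\ref{313}, treating ``separably defectless'' and ``c-defectless'' as interchangeable via the characterization in Theorem~\ref{ims}. First I would observe that, by Theorem~\ref{ims}, the hypothesis that $K$ is separably defectless is equivalent to saying that $(K,v)$ is a c-defectless field. Then, since $F|K$ is a subhenselian function field without transcendence defect and $vK$ is cofinal in $vF$, the hypotheses of Theorem~\ref{313}~b) are met, so that theorem yields $d_c(F|K,v)=1$ and, more importantly, that $(F,v)$ is a c-defectless field. Finally I would apply Theorem~\ref{ims} once more, this time in the opposite direction, to conclude that $(F,v)$ is separably defectless. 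That is the entire argument.

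In writing it out, the only points to verify are that the hypotheses line up correctly and that Theorem~\ref{ims} is a genuine biconditional so that it may be used in both directions; both are the case. It is worth noting that the cofinality assumption on $vK$ in $vF$ is exactly the hypothesis appearing in Theorem~\ref{313}~b) (and, through it, in the ``separably defectless'' clause of the Generalized Stability Theorem~\ref{thmGST}), which is why it is imposed here: when $vK$ is not cofinal in $vF$, Theorem~\ref{313} instead gives control of the defect quotient rather than the completion defect, so the route through c-defectlessness is no longer available.

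Since this deduction is immediate once the two cited theorems are in hand, there is no real obstacle in the proof; the substance of the result lives entirely in Theorem~\ref{ims} and Theorem~\ref{313}. If desired, one could alternatively phrase the argument without explicitly naming c-defectlessness, by combining the chain $d(L|K)=d_c(L|K)\cdot d_q(L|K)$ for finite separable $L|F$ with the vanishing of the relevant completion defects coming from Theorem~\ref{313}~b); but invoking Theorem~\ref{ims} directly is the cleanest presentation.
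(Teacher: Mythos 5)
Your proposal is correct and is exactly the argument the paper intends: the paper states this theorem as a corollary of Theorem~\ref{ims} and Theorem~\ref{313} without writing out the deduction, and your chain ($K$ separably defectless $\Leftrightarrow$ c-defectless by Theorem~\ref{ims}, then Theorem~\ref{313}~b) to pass to $F$, then Theorem~\ref{ims} again) is that deduction.
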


\bn
\bn
{\bf References}
\newenvironment{reference}%
{\begin{list}{}{\setlength{\labelwidth}{5em}\setlength{\labelsep}{0em}%
\setlength{\leftmargin}{5em}\setlength{\itemsep}{-1pt}%
\setlength{\baselineskip}{3pt}}}%
{\end{list}}
\newcommand{\lit}[1]{\item[{#1}\hfill]}
\begin{reference}

\lit{[B]} {Bourbaki, N.$\,$: {\it Commutative algebra}, Paris (1972).}

\lit{[BGR]} {Bosch, S. -- G\"untzer, U. - Remmert, R. : {\it
Non-Archimedean Analysis}, Berlin - Heidelberg - New York - Tokio
(1984).}

\lit{[E]} {Endler, O.$\,$: {\it Valuation theory}, Springer, Berlin (1972).}

\lit{[EP]} {Engler, A. -- Prestel, A.: {\it Valued fields},
Springer-Verlag, Inc., New-York (2005).}

\lit{[GMP]} {Green, B. -- Matignon, M. -- Pop, F. : {\it On valued
function fields I}, manuscripta math. \textbf{65} (1989), 357-376}

\lit{[G-R]} {Grauert, H. -- Remmert, R.: {\it \"Uber die Methode der
diskret bewreteten Ringe in der nicht-archimedischen Analysis.}, Invent.
Math. {\bf 2} (1942), 87--133}

\lit{[GRU]} {Gruson, L. : {\it Fibr\'es vectoriels sur un polydisque
ultram\'etrique}, Ann. Sci. Ec. Super., IV. Ser., \textbf{177} (1968),
45-89}

%

\lit{[K1]} {Kuhlmann, F.-V.$\,$: {\it Henselian function fields and tame
fields}, preprint (extended version of Ph.D.\ thesis), Heidelberg
(1990).}

\lit{[K2]} {Kuhlmann, F.-V.$\,$: {\it Valuation theory}. Book in
preparation. Preliminary versions of several chapters are available on
the web site\n
{\tt http://math.usask.ca/$\,\tilde{ }\,$fvk/Fvkbook.htm}}

\lit{[K3]} {Kuhlmann, F.-V.$\,$: {\it Value groups, residue fields and
bad places of rational function fields},
Trans.\ Amer.\ Math.\ Soc.\ 356 (2004), 4559-4600}

\lit{[K4]} {Kuhlmann, F.-V.$\,$: {\it Elimination of ramification I:
The Generalized Stability Theorem}, Trans. Amer. Math. Soc. {\bf 362}
(2010), 5697--5727.}

\lit{[K5]} {Kuhlmann, F.-V.$\,$: {\it A classification of Artin-Schreier
defect extensions and characterizations of
defectless fields}, Illinois J. Math. {\bf 54} (2010), 397--448.}

\lit{[K6]} {Kuhlmann, F.-V.$\,$: {\it Approximation of elements in
henselizations}, manuscripta math. {\bf 136} (2011), 461--474.}

\lit{[K7]} {Kuhlmann, F.-V.$\,$: {\it The Defect}, in: Commutative
Algebra -- Noetherian and non-Noetherian perspectives. Marco Fontana,
Salah-Eddine Kabbaj, Bruce Olberding and Irena Swanson (eds.), Springer
2011}

\lit{[K8]} {Kuhlmann, F.-V.$\,$: {\it Henselian elements}, in
preparation}

\lit{[Kh]} {Khanduja, S. K.: {\it An independence theorem in simple
transcendental extensions of valued fields.}, J. Indian Math. Soc. {\bf
63} (1997) 243--248.}

\lit{[M]} {Matignon, M. : {\it Genre et genre r$\acute{e}$siduel des
corps de fonctions valu$\acute{e}$s}, manu- scripta math. {\bf 58}
(1987), 179-214.}

\lit{[O]} {Ohm, J. : {\it The Henselian defect for valued function
fields}, Proc.\ Amer.\ Math.\ Soc., {\bf 107}(1989), no. 2, 299--308.}

\lit{[R]} {Ribenboim, P.$\,$: {\it Th\'eorie des valuations}, Les
Presses de l'Uni\-versit\'e de Montr\'eal, Montr\'eal, 1st ed.\ (1964),
2nd ed.\ (1968).}

\lit{[W]} {Warner, S.$\,$: {\it Topological fields}, Mathematics studies
\textbf{157}, North Holland, Amsterdam (1989).}

\lit{[Z-S]} {Zariski, O. - Samuel, P.$\,$: {\it Commutative Algebra},
Vol.~II, The University Series in Higher Mathematics, D. Von Nostrand
Co., Inc., Princeton, NJ-Toronto-London-New York.}

\end{reference}

\fvkadresse
\anadresse

\end{document}